\documentclass[11pt,letterpaper]{amsart}
\usepackage{amsmath}
\allowdisplaybreaks
\usepackage{amsthm}
\usepackage{amssymb}
\usepackage[mathscr]{euscript}
\usepackage{mathrsfs}
\usepackage{comment}
\excludecomment{toexclude}
\usepackage{bm}
\usepackage{fancyhdr}
\usepackage{amscd}
\usepackage[all]{xy}
\usepackage{graphicx}
\usepackage{color}
\usepackage{hyperref}
\usepackage{stmaryrd} 
\hypersetup{
  colorlinks,
  citecolor=black,
  filecolor=black,
  linkcolor=black,
  urlcolor=black
}
\usepackage{tikz}
\usetikzlibrary{backgrounds}
\usepackage{slashed}
\usepackage[T1]{fontenc}
\usepackage[utf8]{inputenc}

\usepackage{enumitem,kantlipsum}
\usepackage{soul}

\theoremstyle{plain}
\newtheorem{lemma}{Lemma}[section]
\newtheorem{proposition}[lemma]{Proposition}
\newtheorem{theorem}[lemma]{Theorem}

\newtheorem{Theorem}{Theorem}

\newtheorem{corollary}[lemma]{Corollary}

\newtheorem{Conjecture}{Conjecture}

\theoremstyle{definition}
\newtheorem{definition}[lemma]{Definition}
\newtheorem{Definition}{Definition}

\numberwithin{equation}{section}
\newtheorem{example}[lemma]{Example}

\DeclareMathOperator{\Ric}{Ric}
\DeclareMathOperator{\Div}{div}
\DeclareMathOperator{\Rm}{Rm}

\DeclareMathOperator{\tr}{tr}

\DeclareMathOperator{\ADM}{ADM}

\usepackage[tmargin=1in,bmargin=1in,lmargin=1in,rmargin=1in]{geometry}
\begin{document}

\title{Strict Stability of the ADM Mass and Static Vacuum Extensions}

\author{Zhongshan An}
\address{Institute of Geometry and Physics, University of Science and Technology of China, Pudong New Area, Shanghai 201315, China}
\email{zhshan.an@gmail.com}
\author{Lan-Hsuan Huang}
\address{Department of Mathematics, University of Connecticut, Storrs, CT 06269, USA}
\email{lan-hsuan.huang@uconn.edu}

\begin{abstract}
We introduce a notion of strict stability for the ADM mass on asymptotically flat manifolds with boundary and establish several of its fundamental properties. In particular, we show that a strictly stable static vacuum metric is a strict local ADM mass minimizer, thus proving a local version of  Bartnik's Mass Minimizer Conjecture near Euclidean exterior regions. Our approach also gives a new proof of the existence of static vacuum extensions for nearby Bartnik boundary data on arbitrary exterior regions of Euclidean space, extending \cite{Miao:2003, Anderson:2015, An-Huang:2022}.
\end{abstract}

\maketitle

\section{Introduction}

The stability of Einstein metrics as constrained critical points of the Einstein--Hilbert functional 
\[
\mathcal H(g)=\int_M R_g\,d\mu_g
\]
 is a classical problem. Since the functional is invariant under diffeomorphisms, its Hessian is degenerate along infinitesimal diffeomorphisms, so strict stability must be formulated modulo gauge. On closed manifolds, one typically fixes the volume and works on a slice transverse to the diffeomorphism action, for instance by restricting to transverse-traceless deformations.

The noncompact setting has additional difficulties. The integral  $\mathcal H$ may diverge, and integration by parts may produce nontrivial boundary terms at infinity. After fixing a transverse-traceless gauge, a standard strict-stability condition  is 
\[
\left.D^2\mathcal H\right|_g(h,h) \geq \lambda\|h\|_{L^2(M)}^2
\]
 for compactly supported variations.  However, Euclidean space is not strictly stable in this sense, and such an $L^2$-coercivity condition does not in general imply local minimality of $\mathcal H$. Other notions of stability for complete noncompact manifolds have been studied, notably by Dahl and Kr\"oncke~\cite{Dahl-Kroncke:2024}.

For asymptotically flat manifolds, the ADM mass provides the natural boundary term at infinity. We introduce a notion of strict stability motivated by Choquet-Bruhat and Marsden~\cite{Choquet-Bruhat-Marsden:1976}, who replaced the $L^2$ lower bound by an energy bound involving $\|\nabla h\|_{L^2}^2$. With respect to this notion, Euclidean space is strictly stable, and they proved that the Euclidean metric is a strict local minimizer of the ADM mass, a result commonly referred to as the \emph{local positive mass theorem}.

In this paper, we develop this approach for asymptotically flat static vacuum manifolds with compact boundary. The admissible variations need not be compactly supported and may vary the geometry at the boundary while preserving the Bartnik boundary data. Our notion of strict stability is adapted to these conditions,  and we establish three consequences: local minimality of the ADM mass, perturbative existence and uniqueness of static vacuum extensions, and local Bartnik mass minimizer theorem.

Let $n\ge3$, and let $(M,g)$ be an $n$-dimensional asymptotically flat manifold of rate $q\in(\frac{n-2}{2},n-2)$, possibly having nonempty compact boundary $\Sigma = \partial M$. The \emph{Bartnik boundary data} of $g$ are $(\gamma,H_g)$,  where $\gamma:=g^\intercal$ is the induced metric on $\Sigma$, and $H_g$ is the mean curvature computed by the divergence of the unit normal $\nu_g$ pointing toward the asymptotically flat end. 
A pair $(g,u)$, where $u$ is a nontrivial function satisfying $u\to 1$ at infinity, is called \emph{static vacuum} if
\begin{equation}\label{eq:intro-static}
-u\Ric_g+\nabla_g^2u-(\Delta_gu)g=0.
\end{equation}
Euclidean space and the spatial Schwarzschild manifolds are the basic examples.

For a function $u$ satisfying $u\to1$ at infinity, define the \emph{Regge--Teitelboim functional with the Gibbons--Hawking--York boundary term} by
\[
\mathcal F(g,u)=2(n-1)\omega_{n-1}m_{\ADM}(g)-\int_MuR_g\,d\mu_g+2\int_\Sigma uH_g\,d\sigma_\gamma.
\]
When $u$ is fixed, we write $\mathcal F_u(g):=\mathcal F(g,u)$. On the space of asymptotically flat metrics with fixed induced boundary metric, $\bar g$ is a critical point of $\mathcal F_{\bar u}$ precisely when $(\bar g,\bar u)$ is a static vacuum pair; see Section~\ref{se:rt-variation}.

Fix a static vacuum pair $(\bar g,\bar u)$ and define the scalar-flat constraint with fixed Bartnik data by
\[
\mathscr C_{\bar g}(M):=\bigl\{g\in\mathcal M^{2,\alpha}_{-q}(M):{}R_g=0\text{ in }M,\ g^\intercal=\bar g^\intercal,\ H_g=H_{\bar g}\text{ on }\Sigma\bigr\}.
\]
Its tangent space is
\[
T_{\bar g}\mathscr C_{\bar g}(M)=\bigl\{h\in C^{2,\alpha}_{-q}(M):R'_{\bar g}(h)=0\text{ in }M,\ h^\intercal=0,\ H'_{\bar g}(h)=0\text{ on }\Sigma\bigr\}.
\]
On $\mathscr C_{\bar g}(M)$,
\[
\mathcal F_{\bar u}(g)=2(n-1)\omega_{n-1}m_{\ADM}(g)+2\int_\Sigma\bar uH_{\bar g}\,d\sigma_\gamma,
\]
so $\mathcal F_{\bar u}$ differs from a fixed multiple of the ADM mass by a constant. We call $\bar g$ \emph{stable} if
\[
\left.D^2\mathcal F_{\bar u}\right|_{\bar g}(h,h)\ge0\qquad\text{for every }h\in T_{\bar g}\mathscr C_{\bar g}(M).
\]

Let $\mathscr D(M)$ denote the group of asymptotically Euclidean diffeomorphisms of $M$ that restrict to the identity on $\Sigma$, and let $\mathcal X(M)$ denote its Lie algebra; see Section~\ref{se:af}. Since $\mathcal F_{\bar u}$ restricted to $\mathscr C_{\bar g}(M)$ is invariant under $\mathscr D(M)$,
\[
\left.D^2\mathcal F_{\bar u}\right|_{\bar g}(\mathcal L_X\bar g,\mathcal L_X\bar g)=0\qquad\text{for every }X\in\mathcal X(M).
\]
Thus strict positivity must be formulated modulo diffeomorphisms. For $h\in T_{\bar g} \mathscr C_{\bar g}(M)$, we define the energy norm of the equivalence class $[h]$ modulo gauge transformations by
\[
	\left\| [h]\right\|_{E} := \inf \Big\{ \| \nabla (h + \mathcal L_X \bar g ) \|_{L^2(M)} : X\in \mathcal X(M)\Big\}.	
\]

\begin{Definition}
We say that the static metric $\bar g$ is a \emph{strictly stable} critical point of $\mathcal F_{\bar u}$ on $\mathscr C_{\bar g}(M)$ if there exists $\lambda>0$ such that
\[
\left.D^2\mathcal F_{\bar u}\right|_{\bar g}(h,h)\ge\lambda \left\| [h] \right\|_{E}^2
\]
for every $h\in T_{\bar g}\mathscr C_{\bar g}(M)$. 
\end{Definition}

Our first result shows that strict stability yields a uniform local ADM-mass-minimizing property for all nearby static vacuum metrics.

\begin{Theorem}\label{Th:minimizer}
Let $(M,\bar g,\bar u)$ be an asymptotically flat static vacuum triple, and suppose that $\bar g$ is a strictly stable critical point of $\mathcal F_{\bar u}$ on $\mathscr C_{\bar g}(M)$. Then there exist a neighborhood $\mathcal V\subset\mathcal M^{2,\alpha}_{-q}(M)\times\bigl(1+C^{2,\alpha}_{-q}(M)\bigr)$ of $(\bar g,\bar u)$ in the $C^{2,\alpha}_{-q}$-topology and a constant $\rho>0$ with the following property. For every static vacuum pair $(g,u)\in\mathcal V$ and every $g_1\in\mathscr C_g(M)$ satisfying $\|g_1-g\|_{C^{2,\alpha}_{-q}(M)}<\rho$, 
one has 
\[
m_{\ADM}(g_1)\ge m_{\ADM}(g).
\] 
Equality holds if and only if $g_1$ is diffeomorphic to $g$ by an element  of $\mathscr D(M)$.
\end{Theorem}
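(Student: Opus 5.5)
The plan is a second-order Taylor expansion of $\mathcal F_u$ on the constraint set $\mathscr C_g(M)$ around the static metric $g$, combined with a gauge-fixing (slice) argument and a perturbation argument showing that strict stability at $\bar g$ persists uniformly for nearby static pairs $(g,u)$.

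\textbf{Step 1: the constraint set is a Banach manifold.} I would show that $\mathscr C_g(M)$ is a $C^{2,\alpha}_{-q}$ Banach submanifold near $g$, with tangent space $T_g\mathscr C_g(M)$. This should follow from the surjectivity of the linearized map
\[
h\mapsto\bigl(R'_g(h),\,h^\intercal,\,H'_g(h)\bigr)
\]
between weighted spaces, uniformly for $g$ near $\bar g$. The expected approach is to realize it through conformal or Bianchi-type deformations that solve a Fredholm elliptic boundary problem. Surjectivity holds at $\bar g$ and is an open condition, so it persists for nearby $g$.

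\textbf{Step 2: gauge fixing.} Using the $\mathscr D(M)$-invariance, I would build a slice. Impose a gauge condition such as the Bianchi/divergence condition $\beta_g(g_1-g)=0$, with $X|_\Sigma=0$ and decay at infinity. Solve for $\psi\in\mathscr D(M)$ near the identity so that $\psi^*g_1$ lies in the slice, via the implicit function theorem; the relevant operator on $X$ is $\beta_g\mathcal L_X g$, a vector Laplacian, which is an isomorphism with Dirichlet data. In the slice, the norm $\|\nabla h\|_{L^2}$ should be comparable to $\|[h]\|_E$ for $h$ tangent to the slice. Since $m_{\ADM}$ is diffeomorphism invariant and $\psi^*g_1\in\mathscr C_g(M)$, it suffices to treat $g_1$ in the slice.

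\textbf{Step 3: Taylor expansion.} Since $(g,u)$ is static vacuum, $D\mathcal F_u|_g=0$ on $\mathscr C_g(M)$. Write $g_1=g+h+k$, with $h\in T_g\mathscr C_g\cap$ slice and $k$ quadratically small, using the submanifold chart. Then
\[
\mathcal F_u(g_1)-\mathcal F_u(g)=\tfrac12 D^2\mathcal F_u|_g(h,h)+E,\qquad |E|\le C\,\|h\|_{C^{2,\alpha}_{-q}}\,\|\nabla h\|_{L^2}^2 .
\]
The cubic remainder bound is the delicate point. $\mathcal F$ involves $\int uR$, whose third derivative contains terms like $h\,\partial h\,\partial h$ and $h\,h\,\partial^2h$. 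After integrating by parts (the boundary terms are controlled because $h^\intercal=0$ and $H$ is fixed, and the terms at infinity vanish because $q>\frac{n-2}{2}$), these are bounded by $\|h\|_{C^0}\|\nabla h\|_{L^2}^2$. This mirrors Choquet-Bruhat--Marsden, and the mass term is absorbed exactly as in their argument.

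\textbf{Step 4: uniform strict stability.} This is the step I expect to be the main obstacle. The hypothesis gives coercivity only at $(\bar g,\bar u)$, and it must be transferred to $(g,u)$: we need
\[
D^2\mathcal F_u|_g(h,h)\ge\tfrac\lambda2\|[h]\|_E^2 \quad\text{on } T_g\mathscr C_g(M).
\]
The second variation is a quadratic form in $(h,\nabla h)$ with coefficients depending continuously on $(g,u)$ in $C^{1,\alpha}_{-q}$, so it differs from the $\bar g$ form by $\varepsilon\|\nabla h\|_{L^2}^2$, using Hardy's inequality to control the weighted $L^2$ norm of $h$. However, the tangent spaces differ. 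I would map $T_g\mathscr C_g$ to $T_{\bar g}\mathscr C_{\bar g}$ by a projection close to the identity, built from a uniformly bounded right inverse of the linearized constraint from Step 1. The energy norms then agree up to $1+O(\varepsilon)$, using the slice to replace $\|[h]\|_E$ by $\|\nabla h\|_{L^2}$.

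\textbf{Conclusion.} Combining the steps gives
\[
\mathcal F_u(g_1)-\mathcal F_u(g)\ge\Bigl(\tfrac\lambda4-C\rho\Bigr)\|\nabla h\|_{L^2}^2\ge0
\]
for $\rho$ small. On $\mathscr C_g$, $\mathcal F_u$ equals $2(n-1)\omega_{n-1}m_{\ADM}$ plus a constant, which gives the inequality. Equality forces $\nabla h=0$, hence $h=0$ by decay, hence $k=0$ and $\psi^*g_1=g$.
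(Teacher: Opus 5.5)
\textbf{There is a genuine gap in Steps~3 and~4.} Your architecture matches the paper's: uniform strict stability at nearby static pairs, a gauge slice, and a second-order Taylor expansion about the static critical point. What is missing is the analytic step that makes it close.

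In Step~3 the remainder $E$ contains the cross term $D^2\mathcal F_u|_g(h,k)$, which is bounded only by $C\|\nabla h\|_{L^2}\|\nabla k\|_{L^2}$. In Step~4, the correction $k$ taking $h\in T_g\mathscr C_g(M)$ to $h-k\in T_{\bar g}\mathscr C_{\bar g}(M)$ enters the same way. To absorb these terms into $\lambda\|\nabla h\|_{L^2}^2$ you need $\|\nabla k\|_{L^2}\le C\delta\|\nabla h\|_{L^2}$.

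A right inverse of $h\mapsto(R'_g(h),h^\intercal,H'_g(h))$ bounded on weighted H\"older spaces, as in your Step~1, does not give this. It yields only $\|k\|_{C^{2,\alpha}_{-q}}\le C\|h\|^2_{C^{2,\alpha}_{-q}}$ (resp.\ $\le C\varepsilon\|h\|_{C^{2,\alpha}_{-q}}$), hence only $\|\nabla k\|_{L^2}\le C\delta^2$. Since $\|\nabla h\|_{L^2}$ does not control $\|h\|_{C^{2,\alpha}_{-q}}$ (think of concentrated perturbations), this cannot be absorbed. So neither $|E|\le C\|h\|_{C^{2,\alpha}_{-q}}\|\nabla h\|_{L^2}^2$ nor ``the energy norms agree up to $1+O(\varepsilon)$'' follows from what you propose.

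The difficulty is real. The defects to be corrected include $(R'_{\bar g}-R'_g)(h)$, which contains $O(\varepsilon)\nabla^2h$, the mean-curvature defect involving normal derivatives of $h$, and the Taylor remainders $D^2R(h,h)$ and $D^2H(h,h)$. These are small relative to $\|\nabla h\|_{L^2}$ only weakly, after pairing with a test function $v$ and integrating by parts. You therefore need a right inverse bounded from this dual-energy space into the energy space, while imposing both $k^\intercal=0$ and a first-order boundary condition on $H'_g(k)$. A conformal correction $k=wg$ fails: $k^\intercal=0$ forces $w|_\Sigma=0$, and then $H'_g(wg)=\frac{n-1}{2}\nu_g(w)$ overdetermines the problem.

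\textbf{What the paper does instead.} This is its key ingredient, Theorem~\ref{th:dec}. The correction comes from the elliptic boundary problem $R'_g(k)=f$, $\Delta_gk+L_g^*v=0$ in $M$, $k|_\Sigma=0$, $H'_g(k)=\phi$ on $\Sigma$. By \eqref{eq:ibp2} it satisfies $\|\nabla k\|_{L^2}^2=\int_Mvf\,d\mu_g-2\int_\Sigma v\phi\,d\sigma_\gamma$. The estimate $\|\nabla v\|_{L^2}\le C\|\nabla k\|_{L^2}$ is proved by testing the auxiliary equation against a tensor $T$ with $\Div_gT=\nabla v$, $T|_\Sigma=0$, $\|\nabla T\|_{L^2}\le C\|\nabla v\|_{L^2}$, a Bogovski\u{\i}-type construction. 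The defects satisfy $\bigl|\int_Mvf\bigr|+\bigl|\int_\Sigma v\phi\bigr|\le C\delta\|\nabla v\|_{L^2}\|\nabla h\|_{L^2}$, so $\|\nabla k\|_{L^2}\le C\delta\|\nabla h\|_{L^2}$. This closes both the persistence of strict stability and the Taylor estimate; the paper then expands directly along $g+sh$ with $h$ the full slice difference, so no submanifold chart is needed.

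\textbf{A smaller point on Step~2.} The Bianchi condition with $X|_\Sigma=0$ is not an isomorphism on all of $\mathcal X(M)$. Fields asymptotic to Euclidean Killing fields leave a finite-dimensional residual kernel $\mathcal K_g$, with $\mathcal L_Xg\neq0$ in general. These pure-gauge directions would lie in your slice and destroy coercivity in $\|\nabla h\|_{L^2}$, so an extra orthogonality condition as in Example~\ref{ex:ho} is required.
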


In fact, the proof gives a quadratic lower bound for the difference of the ADM masses in the $L^2$-energy distance to the diffeomorphism orbit; see Proposition~\ref{pr:minimizer}. A key analytic ingredient is to identify the tangent spaces of $\mathscr C_g$ as $g$ varies, using a decomposition of symmetric $2$-tensors adapted to scalar curvature deformations and the Bartnik data.  See Theorem~\ref{th:dec}.

These results are closely connected with Bartnik's quasi-local mass program. Let $(\Omega,g_\Omega)$ be a compact Riemannian manifold with smooth connected boundary $\Sigma=\partial\Omega$. Its Bartnik mass is
\[
m_B(\Omega,g_\Omega):=\inf\bigl\{m_{\ADM}(g):(M,g)\text{ is an admissible extension}\bigr\},
\]
where an admissible extension is an asymptotically flat metric $g$ on an exterior manifold $M$ satisfying
\[
R_g\ge0,\qquad R_g\in L^1(M),\qquad g^\intercal=g_\Omega^\intercal,\qquad H_g\le H_{g_\Omega},
\]
together with an appropriate no-horizon condition. For our local results, we fix any diffeomorphism-invariant no-horizon condition that is open under sufficiently small $C^{2,\alpha}_{-q}$-perturbations.

An admissible extension realizing the infimum is called a \emph{Bartnik mass minimizer}. In 1989, Bartnik~\cite{Bartnik:1989} conjectured the existence of mass minimizers. This conjecture was subsequently refined into the following two closely related conjectures; see~\cite{Bartnik:1997,Bartnik:2002}.

\begin{Conjecture}[Bartnik mass minimizer conjecture~{\cite[Conjecture 5]{Bartnik:2002}}]\label{conjecture}
Let $(\Omega,g_\Omega)$ be a compact $3$-dimensional manifold with $R_{g_\Omega}\ge0$ and $H_{g_\Omega}>0$. Then there exists a unique Bartnik mass minimizer $(M,g)$. Moreover, $g$ is static, with a positive static potential $u\to1$ at infinity.
\end{Conjecture}

Under standard hypotheses, a minimizer is static vacuum; see Corvino~\cite{Corvino:2000} and Anderson--Jauregui~\cite{Anderson-Jauregui:2019}. This leads to the extension conjecture.

\begin{Conjecture}[Static vacuum extension conjecture~{\cite[Conjecture 7]{Bartnik:2002}}]\label{conjecture2}
Let $(\Omega,g_\Omega)$ be as above. Then there exists a unique asymptotically flat static vacuum extension $(g,u)$ on $M$, up to diffeomorphism, such that $(\gamma,H_g)=(\gamma_\Omega,H_{g_\Omega})$.
\end{Conjecture}

Additional hypotheses are necessary in both conjectures. Anderson--Jauregui~\cite{Anderson-Jauregui:2019} constructed boundary data that admit no mass-minimizing extension; see also Anderson~\cite{Anderson:2024}. This naturally raises the question of which Bartnik boundary data admit static vacuum extensions and when such extensions are Bartnik mass minimizers.

Perturbative static vacuum extensions for reflection-symmetric boundary data near those of a Euclidean round sphere were constructed by Miao.~\cite{Miao:2003}, and Anderson~\cite{Anderson:2015} later considered the problem without symmetry. In~\cite{An-Huang:2022,An-Huang:2022-JMP}, we developed a general approach that extends the existence theory to star-shaped and generic Euclidean regions. Related results for more general static vacuum backgrounds, including Schwarzschild, have also been studied in~\cite{An-Huang:2024,Ellithy:2026,Alexakis-An-Ellithy-Huang:2026}. The role of strict stability demonstrated in this paper, however, has not previously been used.

A main observation here is that strict stability itself provides a direct mechanism for eliminating the kernel of the linearized Bartnik boundary map. If $(h,v)$ lies in  this kernel, then
\[
\left.D^2\mathcal F_{\bar u}\right|_{\bar g}(h,h)=0.
\]
Strict stability therefore forces $h$ to be pure gauge, eliminating the obstruction to the linearized problem. More generally, we show that strict stability is inherited by exteriors and obtain the existence for any exterior regions. 

\begin{Theorem}\label{Th:extension}
Let $(M,\bar g,\bar u)$ be an asymptotically flat static vacuum triple, possibly with compact boundary. Suppose that $\bar g$ is a strictly stable critical point of $\mathcal F_{\bar u}^M$ on $\mathscr C_{\bar g}(M)$. Let $\hat M\subset M$ be an exterior domain containing the asymptotically flat end, with smooth connected boundary $\hat\Sigma:=\partial\hat M$, and suppose that $\overline{M\setminus\hat M}$ is connected; we allow $\hat M=M$. Assume also that $\bar u>0$ in $\hat M$.

Then there exist constants $\epsilon,C>0$ such that, for every pair of boundary data $(\tau,\phi)$ satisfying
\[
\|\tau-\bar\gamma\|_{C^{2,\alpha}(\hat\Sigma)}+\|\phi-H_{\bar g}\|_{C^{1,\alpha}(\hat\Sigma)}\le\epsilon,
\]
there exists an asymptotically flat static vacuum pair $(g,u)$ on $\hat M$ such that $(g^\intercal,H_g)=(\tau,\phi)$ and
\[
\|(g-\bar g,u-\bar u)\|_{C^{2,\alpha}_{-q}(\hat M)}
\le C\left(\|\tau-\bar\gamma\|_{C^{2,\alpha}(\hat\Sigma)}+\|\phi- H_{\bar g}\|_{C^{1,\alpha}(\hat\Sigma)}\right).
\]
After fixing an elliptic gauge, the solution may be chosen to depend smoothly on $(\tau,\phi)$. Moreover, the solution is geometrically unique in the sense that any two sufficiently nearby solutions are related by an element of $\mathscr D(\hat M)$. Finally, $g$ is a strictly stable critical point of $\mathcal F_u^{\hat M}$ on $\mathscr C_g(\hat M)$.
\end{Theorem}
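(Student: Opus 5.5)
The plan is to deduce Theorem~\ref{Th:extension} from the implicit function theorem applied to the static vacuum equations in an elliptic gauge on $\hat M$. We take as input the tensor decomposition of Theorem~\ref{th:dec} and the second variation formula for $\mathcal F_{\bar u}$ from Section~\ref{se:rt-variation}. The proof has three steps, taken in order.

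\textbf{Step 1: strict stability passes to the exterior.} We first show that $\bar g|_{\hat M}$ is a strictly stable critical point of $\mathcal F^{\hat M}_{\bar u}$ on $\mathscr C_{\bar g}(\hat M)$. Let $h\in T_{\bar g}\mathscr C_{\bar g}(\hat M)$. We extend $h$ across $\hat\Sigma$ to a tensor $\tilde h\in T_{\bar g}\mathscr C_{\bar g}(M)$ by solving, on the compact region $\overline{M\setminus\hat M}$, a linearized scalar-curvature problem. Its boundary conditions are $\tilde h^\intercal=h^\intercal$ and $H'(\tilde h)=H'(h)$ on $\hat\Sigma$, together with vanishing Bartnik data on $\Sigma$. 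The connectedness of $\overline{M\setminus\hat M}$ and the surjectivity of the linearized constraint with Bartnik data, again from Theorem~\ref{th:dec}, should allow this. The gluing must be arranged so that the boundary terms on $\hat\Sigma$ coming from the two sides cancel in $D^2\mathcal F$. This is possible because $(\bar g,\bar u)$ is static and $\bar u>0$: the interior contribution from $M\setminus\hat M$ is then a quadratic form that can be made nonnegative, or zero, by choosing the extension to be a static linearized solution. We then compare the energy norms $\|[\tilde h]\|_E$ and $\|[h]\|_E$. Combining the two gives $D^2\mathcal F^{\hat M}(h,h)\ge\lambda'\|[h]\|_E^2$.

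\textbf{Step 2: the linearized problem is an isomorphism.} Fix a gauge, for instance the Bianchi-type gauge $\beta_{\bar g}h+\ldots=0$ used in our earlier work. Consider the operator $L(h,v)$ consisting of
\begin{itemize}
\item the linearized static equations in $\hat M$,
\item the gauge condition,
\item the linearized Bartnik data $(h^\intercal,H'(h))$ on $\hat\Sigma$,
\item gauge boundary conditions on $\hat\Sigma$.
\end{itemize}
This operator is elliptic and Fredholm of index zero on $C^{2,\alpha}_{-q}$. So it suffices to show that its kernel is trivial. Let $(h,v)\in\ker L$. Then $h\in T_{\bar g}\mathscr C_{\bar g}(\hat M)$, and the first variation of the Euler--Lagrange equation shows $D^2\mathcal F_{\bar u}(h,h)=0$; the $v$ terms drop out because $R'(h)=0$ and the Bartnik data are fixed. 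By Step 1, $\|[h]\|_E=0$, so $h=\mathcal L_X\bar g$ for some $X\in\mathcal X(\hat M)$, and then $v=X(\bar u)$. The gauge condition on such $X$ becomes a homogeneous elliptic system with Dirichlet data $X|_{\hat\Sigma}=0$, which forces $X=0$. Hence $L$ is an isomorphism.

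\textbf{Step 3: nonlinear conclusion.} The implicit function theorem now gives smooth dependence of the gauged solution on $(\tau,\phi)$, together with the estimate in the statement. We must also show that gauged solutions are genuinely static vacuum. For this we use the standard argument that the gauge term satisfies an elliptic equation with vanishing boundary data, so it vanishes. For geometric uniqueness, we put any nearby solution into the gauge by a small element of $\mathscr D(\hat M)$, obtained by solving the gauge equation, and then apply local uniqueness from the implicit function theorem. Finally, strict stability of $g$ follows from openness. The constant $\lambda$ in the coercivity of $D^2\mathcal F$ persists under small perturbation, since the tangent spaces $T_g\mathscr C_g$ vary continuously by Theorem~\ref{th:dec}.

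The main obstacle is Step 1. The delicate point is constructing the extension across $\hat\Sigma$ so that the boundary terms cancel while keeping control of the gauge-modded energy norm; this is also where $\bar u>0$ and connectedness are needed. If the direct extension fails, the first alternative is to argue by contradiction: take a sequence $h_i$ with $\|[h_i]\|_E=1$ and $D^2\mathcal F(h_i,h_i)\to0$, and use compactness together with unique continuation for static linearized solutions to reach a contradiction.
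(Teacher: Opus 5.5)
\textbf{Verdict: Steps 2 and 3 match the paper, but Step 1, which carries the real content of the theorem (Theorem~\ref{th:sta}), has a genuine gap.}

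\emph{What matches.} A kernel element $(h,v)$ satisfies $\bar u R'_{\bar g}(h)=0$; this is where $\bar u>0$ is used, not in Step 1. Hence $h\in T_{\bar g}\mathscr C_{\bar g}(\hat M)$ and $D^2\mathcal F_{\bar u}(h,h)=0$ (Lemma~\ref{lem:nondeg}). Strict stability on $\hat M$ then makes $h$ pure gauge. The rest is the An--Huang existence theorem (Theorem~\ref{thm:existence}) together with Proposition~\ref{pr:str}.

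\emph{First gap: the extension in Step 1 is not in $T_{\bar g}\mathscr C_{\bar g}(M)$.} For $h\in T_{\bar g}\mathscr C_{\bar g}(\hat M)$ you already have $h^\intercal=0$ and $H'_{\bar g}(h)=0$ on $\hat\Sigma$. So your boundary value problem on $\overline{M\setminus\hat M}$ is solved by $\tilde h=0$, and existence of an extension is not the issue. The issue is regularity. A tensor glued along $\hat\Sigma$ from data matching only $(h^\intercal,H'(h))$ is not $C^{2,\alpha}$, so strict stability on $M$ cannot be applied to it. Without a gauge normalization it even jumps across $\hat\Sigma$ and has infinite energy.

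The paper handles this as follows.
\begin{enumerate}
\item It puts $\hat h$ in geodesic gauge, so that $\hat h|_{\hat\Sigma}=0$, and extends by zero. Then no boundary terms appear on $\hat\Sigma$, and the contribution of $\overline{M\setminus\hat M}$ is exactly zero, so $D^2\mathcal F^{\hat M}_{\bar u}(\hat h,\hat h)=Q^M_{\bar g,\bar u}(h,h)$.
\item The resulting $h$ is only $W^{1,p}$, so it must be approximated in energy by genuine elements of $T_{\bar g}\mathscr C_{\bar g}(M)$. This uses Miao's mollification $h_\delta$. The condition $H'_{\bar g}(\hat h)=0$ is exactly what cancels the singular term $-\bar\gamma^{ab}\partial_s^2(h_\delta)_{ab}$, so that $R'_{\bar g}(h_\delta)\to0$ in $L^p$.
\item It then applies the correction $k_\delta$ from Proposition~\ref{pr:k}, with $\|\nabla k_\delta\|_{L^2}\to0$, and passes strict stability to the limit using the energy continuity \eqref{eq:Qcont} of $Q$.
\end{enumerate}
Your remark that the interior contribution can be made ``nonnegative'' also has the wrong sign. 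It must be $\le 0$ to bound $D^2\mathcal F^{\hat M}$ from below.

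\emph{Second gap: the comparison of gauge norms is only asserted.} You need
$\|\nabla P^{\hat M}_{\bar g}\hat h\|_{L^2(\hat M)}\le C\|\nabla P^{M}_{\bar g}h\|_{L^2(M)}$.
This is not automatic, because the optimal gauge field $X\in\mathcal X(M)$ need not vanish on $\hat\Sigma$, so its restriction is not admissible on $\hat M$. The paper argues in three steps (Lemmas~\ref{le:Korn} and \ref{le:proj}):
\begin{enumerate}
\item On $\overline{M\setminus\hat M}$ one has $P^M_{\bar g}h=h+\mathcal L_X\bar g=\mathcal L_X\bar g$, since $h=0$ there.
\item Korn's inequality on this connected compact region bounds $\|X-Z\|_{H^2}$ by $\|\nabla P^M_{\bar g}h\|_{L^2}$. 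Here $X|_\Sigma=0$, or one works modulo Euclidean Killing fields when $\Sigma=\emptyset$.
\item One then subtracts a collar extension $W$ of $(X-Z)|_{\hat\Sigma}$.
\end{enumerate}
This, not an extension problem, is where connectedness of $\overline{M\setminus\hat M}$ enters.

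\emph{The fallback does not avoid these ingredients.} The energy norm has no compactness at infinity. Moreover, a limiting null direction on $\hat M$ would still have to be shown pure gauge, which is exactly what the zero-extension argument supplies. So the contradiction argument is circular without it.
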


Since Euclidean space is strictly stable~\cite{Choquet-Bruhat-Marsden:1976} (see also Lemma~\ref{le:stable}), Theorem~\ref{Th:extension} applies with
\[
(M,\bar g,\bar u)=(\mathbb R^n,g_{\mathbb E},1),\qquad \hat M=\mathbb R^n\setminus\Omega.
\]
Thus, for every bounded connected domain $\Omega\subset\mathbb R^n$ with smooth connected boundary, every sufficiently small perturbation of the Euclidean Bartnik boundary data on $\partial\Omega$ admits a nearby, geometrically unique static vacuum extension. This gives a unified perturbative existence result for such  Euclidean exterior regions, extending~\cite{Miao:2003,Anderson:2015,An-Huang:2022,An-Huang:2022-JMP}.

Our final result shows that a strictly stable static vacuum metric is a local minimizer in the larger class relevant to Bartnik mass, in which the scalar curvature is nonnegative and the boundary mean curvature is allowed to decrease.

\begin{Theorem}\label{Th:local}
Let $(M,\bar g,\bar u)$ be an asymptotically flat static vacuum triple with connected compact boundary $\Sigma$. Suppose that $\bar u>0$ and that $\bar g$ is strictly stable. Then there exists a $C^{2,\alpha}_{-q}$-neighborhood $\mathcal V$ of $\bar g$ such that
\[
m_{\ADM}(g)\ge m_{\ADM}(\bar g)
\]
for every $g\in\mathcal V$ satisfying
\[
R_g\ge0,\qquad R_g\in L^1(M),\qquad g^\intercal=\bar g^\intercal,\qquad H_g\le H_{\bar g}.
\]
Equality holds only if $g$ is diffeomorphic to $\bar g$ by an element of $\mathscr D(M)$.

Consequently, if $\bar g$ satisfies a no-horizon condition that is open under sufficiently small perturbations, then $\bar g$ is a strict local Bartnik mass minimizer in the admissible class.
\end{Theorem}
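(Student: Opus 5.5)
The plan is to reduce Theorem~\ref{Th:local} to Theorem~\ref{Th:minimizer} and Theorem~\ref{Th:extension}, by first deforming a competitor $g$ to a scalar-flat metric with the same Bartnik data as some nearby static metric, and then comparing masses.

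\emph{Step 1: conformal deformation to scalar-flat.} Given $g\in\mathcal V$ with $R_g\ge0$, $R_g\in L^1$, $g^\intercal=\bar g^\intercal$ and $H_g\le H_{\bar g}$, solve
\[
-a_n\Delta_g w+R_g w=0\ \text{ in }M,\qquad w=1\ \text{ on }\Sigma,\qquad w\to1\ \text{ at infinity},
\]
with $a_n=\tfrac{4(n-1)}{n-2}$. Smallness of $g-\bar g$ makes this uniquely solvable, with $w$ close to $1$ in $C^{2,\alpha}_{-q}$. Set $\tilde g=w^{4/(n-2)}g$. Then $R_{\tilde g}=0$ and $\tilde g^\intercal=g^\intercal=\bar g^\intercal$. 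Since $R_g\ge0$, the maximum principle gives $w\le1$. Hence $\partial_\nu w\ge0$ on $\Sigma$, because $w$ attains its maximum $1$ there and $\nu$ points into $M$. The standard expansion gives
\[
m_{\ADM}(\tilde g)=m_{\ADM}(g)-c_n\int_M R_g w\,d\mu_g\le m_{\ADM}(g),
\]
and the boundary mean curvature becomes
\[
H_{\tilde g}=H_g+\tfrac{2(n-1)}{n-2}\partial_\nu w.
\]
This can exceed $H_g$. That is harmless, because the boundary data will be matched in Step 2 rather than requiring $H_{\tilde g}\le H_{\bar g}$.

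\emph{Step 2: static comparison metric.} Apply Theorem~\ref{Th:extension} with $\hat M=M$ and data $(\tau,\phi)=(\bar\gamma,H_{\tilde g})$. This is $C^{1,\alpha}$-close to $(\bar\gamma,H_{\bar g})$, so it yields a static vacuum pair $(g_s,u_s)$ near $(\bar g,\bar u)$ with Bartnik data $(\bar\gamma,H_{\tilde g})$. By construction $\tilde g\in\mathscr C_{g_s}(M)$, and $\|\tilde g-g_s\|$ is small. Theorem~\ref{Th:minimizer} therefore gives
\[
m_{\ADM}(\tilde g)\ge m_{\ADM}(g_s).
\]

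\emph{Step 3: compare $g_s$ with $\bar g$.} Along the smooth family of static solutions $(g_t,u_t)$ with data $(\bar\gamma,H_t)$, the first variation formula from Section~\ref{se:rt-variation} applies. Since $(g_t,u_t)$ is critical for $\mathcal F_{u_t}$ with fixed induced metric, the variation of $2(n-1)\omega_{n-1}m_{\ADM}(g_t)$ is $-2\int_\Sigma u_t\,\dot H_t\,d\sigma$ (the sign to be confirmed from the formula). Integrating along the path from $\bar g$ to $g_s$ gives
\[
2(n-1)\omega_{n-1}\bigl(m_{\ADM}(g_s)-m_{\ADM}(\bar g)\bigr)=-2\int_\Sigma \bar u\,(H_{\tilde g}-H_{\bar g})\,d\sigma+O\bigl(\|H_{\tilde g}-H_{\bar g}\|^2\bigr).
\]
The first-order term is nonnegative precisely when $H_{\tilde g}\le H_{\bar g}$, and Step 1 does not guarantee this. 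So a cleaner arrangement is needed.

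I would first try taking $\tau,\phi$ equal to $g$'s own data $(\bar\gamma,H_g)$, so that $H_g\le H_{\bar g}$ is used directly. The real work then lies in three places:
\begin{itemize}
\item showing $m(g)\ge m(g_s)$ for the static $g_s$ with the same data, which uses $R_g\ge0$ through the second variation of $\mathcal F_{u_s}$ including the term $-\int u_sR_g\ge$-type contributions;
\item getting $m(g_s)\ge m(\bar g)$ from the sign $\int\bar u(H_{\bar g}-H_g)\ge0$, using $\bar u>0$;
\item controlling the quadratic error.
\end{itemize}
The linear boundary term $\int\bar u(H_{\bar g}-H_g)$ and the linear bulk term $\int\bar uR_g$ are both nonnegative. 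The quadratic remainder must be dominated by strict stability modulo gauge plus these linear terms.

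\emph{Main obstacle.} The hard step is this error control. I would expand
\[
\mathcal F_{\bar u}(g)-\mathcal F_{\bar u}(\bar g)=\tfrac12 D^2\mathcal F_{\bar u}(h,h)+O(\|h\|^3).
\]
Its left side equals
\[
2(n-1)\omega_{n-1}\bigl(m(g)-m(\bar g)\bigr)-\int\bar uR_g+2\int\bar u(H_g-H_{\bar g}).
\]
I would then decompose $h$ via Theorem~\ref{th:dec} into gauge, a tangential part in $T\mathscr C$, and a transverse part controlled linearly by $(R_g,H_g-H_{\bar g})$. Strict stability bounds the tangential part. The transverse part's quadratic contribution is bounded by $\|R_g\|_{L^1}$ and $\|H_g-H_{\bar g}\|$ times a small factor, so it is absorbed by the positive linear terms. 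The delicate point is that $R_g$ is controlled only in weighted norms while it is small only in $L^1$. Handling this mismatch is where I expect the difficulty, possibly requiring the conformal step above to remove $R_g$ first.

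For the equality case, equality forces $R_g=0$ and $H_g=H_{\bar g}$, and then the equality statement of Theorem~\ref{Th:minimizer} applies.
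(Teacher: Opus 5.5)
\textbf{There is a genuine gap, and it starts with a sign error in Step~1.} You have $w\le 1$ in $M$, $w=1$ on $\Sigma$, and $\nu$ pointing \emph{into} $M$. Moving off $\Sigma$ along $\nu$ can therefore only decrease $w$, so $\partial_\nu w\le 0$, not $\ge 0$. Consequently
\[
H_{\tilde g}=H_g+\tfrac{2(n-1)}{n-2}\,\partial_\nu w\le H_g\le H_{\bar g},
\]
which is exactly the inequality you concluded was unavailable.

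Your displayed mass formula is also incomplete when $\Sigma\neq\emptyset$. The divergence theorem produces an extra flux term $\int_\Sigma\partial_\nu w\,d\sigma$, and with the correct sign this term pushes the mass of $\tilde g$ up. The clean justification of $m_{\ADM}(\tilde g)\le m_{\ADM}(g)$ is the expansion $w=1+Ar^{2-n}+o(r^{2-n})$: the bound $w\le 1$ forces $A\le 0$, and $m_{\ADM}(\tilde g)=m_{\ADM}(g)+2A$.

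Because of the sign error, you abandon the correct route and switch to a direct second-variation argument, with $\int\bar uR_g$ and $\int\bar u(H_{\bar g}-H_g)$ as positive linear terms. That alternative is left open exactly where you locate the main obstacle, namely absorbing the quadratic remainder when $R_g$ is small only in weighted norms. As written, the proposal therefore does not prove the theorem.

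\textbf{The second, smaller gap is in Step~3.} You linearize at $\bar u$ and carry a remainder $O(\|H_{\tilde g}-H_{\bar g}\|^2)$. In a $C^{1,\alpha}$-type norm this is not obviously dominated by the linear term, which only controls $\|H_{\bar g}-H_{\tilde g}\|_{L^1(\Sigma)}$.

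No expansion is needed. Take the family $(g_t,u_t)$ from Theorem~\ref{Th:extension} with data $(\bar\gamma,H_t)$, where $H_t=(1-t)H_{\bar g}+tH_{\tilde g}$. After shrinking the neighborhood, $u_t>0$ for all $t$, because $\bar u$ is bounded below by a positive constant. Your own first-variation formula then gives, for every $t$,
\[
\frac{d}{dt}m_{\ADM}(g_t)=-\frac{1}{(n-1)\omega_{n-1}}\int_\Sigma u_t\,(H_{\tilde g}-H_{\bar g})\,d\sigma_{\bar\gamma}\ge 0 .
\]
Combine this with Theorem~\ref{Th:minimizer} applied at $g_1$ (your Step~2) and with $m_{\ADM}(\tilde g)\le m_{\ADM}(g)$. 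This yields
\[
m_{\ADM}(g)\ge m_{\ADM}(\tilde g)\ge m_{\ADM}(g_1)\ge m_{\ADM}(\bar g).
\]
With these two corrections, your Steps~1--3 become the paper's proof.

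\textbf{The equality case needs a justification you only asserted.} Equality in the monotone step, together with $u_t>0$, gives $H_{\tilde g}=H_{\bar g}$. Hence $H_g=H_{\bar g}$ and $\partial_\nu w=0$ on $\Sigma$. The Hopf boundary point lemma then forces $w\equiv 1$, so $R_g=0$ and $g=\tilde g\in\mathscr C_{\bar g}(M)$. At that point the rigidity statement of Theorem~\ref{Th:minimizer} applies.
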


In particular, when applied to Euclidean space, it gives a local affirmative result toward Bartnik's Mass Minimizer Conjecture. More precisely, let $\Omega\subset\mathbb R^n$ be a bounded connected domain with smooth connected boundary. Then there exists $\epsilon>0$ such that any static vacuum extension $(g,u)$ on $\mathbb R^n\setminus\Omega$ satisfying $\|(g-g_{\mathbb E},u-1)\|_{C^{2,\alpha}_{-q}(M)}<\epsilon$ 
is a local Bartnik mass minimizer: $g$ minimizes the ADM mass among admissible extensions in a sufficiently small $C^{2,\alpha}_{-q}$-neighborhood of $g$.

\subsection*{Organization of the paper}

Section~\ref{se:prelim} collects the preliminaries on asymptotically flat manifolds, gives the key equivalent characterization of strict stability, and derives the variational formulas. In Section~\ref{se:min}, assuming the tensor decomposition theorem, we prove Theorem~\ref{Th:minimizer}. Section~\ref{se:de} establishes the tensor decomposition theorem. Finally, we prove Theorem~\ref{Th:extension} and Theorem~\ref{Th:local} in Section~\ref{se:ext}.

\subsection*{Acknowledgments}
Part of this paper was completed while both authors were in residence at the Simons Laufer Mathematical Sciences Institute in Berkeley, California, during the Fall 2024 semester, supported by NSF DMS-1928930. We thank Richard Schoen for helpful discussions. LH was partially supported by NSF DMS-2304966.
\section{Preliminaries}\label{se:prelim}

We introduce the asymptotically flat setting and the gauge projections used to define coercivity modulo diffeomorphisms, then record the variation formulas for $\mathcal F_u$. Along the boundary, the unit normal is chosen to point toward the asymptotically flat end.

\subsection{Asymptotically flat manifolds, weighted spaces, and diffeomorphisms}\label{se:af}

Let $n\ge3$, and let $M$ be a smooth, connected $n$-dimensional manifold with possibly nonempty compact boundary $\Sigma$. We assume that $M$ has one end, identified by a diffeomorphism $\Phi:M\setminus K \to \mathbb R^n\setminus\overline{B_1(0)}$ for some compact set $K\subset M$. Let $\{x^1,\dots,x^n\}$ be the induced coordinates, and extend $r=|x|$ to a smooth positive function on $M$.

The asymptotic chart and a fixed finite atlas on $K$ determine the weighted H\"older spaces $C^{k,\alpha}_{-q}(M)$ and weighted Sobolev spaces $W^{k,p}_{-q}(M)$. We use the standard definitions in \cite[Section~2.1]{Eichmair-Huang-Lee-Schoen:2016}. In particular, a tensor in $C^{k,\alpha}_{-q}(M)$ decays at rate $r^{-q}$, with each derivative contributing one additional power of decay.

Throughout, fix $p\in(1,\infty)$,  $\alpha\in(0,1)$, and $q\in\left(\frac{n-2}{2},n-2\right)$, unless otherwise indicated.  Let $g_{\mathbb E}$ denote the Euclidean metric on the end, smoothly extended to a background metric on $M$. The space of asymptotically flat metrics of rate $q$ is
\[
\mathcal M^{2,\alpha}_{-q}(M) := \left\{ g\text{ a Riemannian metric on }M: g-g_{\mathbb E}\in C^{2,\alpha}_{-q}(M) \right\}.
\]
When $R_g$ is integrable, the ADM mass is
\begin{equation}
m_{\mathrm{ADM}}(g) = \frac{1}{2(n-1)\omega_{n-1}} \lim_{r\to\infty} \int_{S_r} \bigl(\partial_i g_{ij}-\partial_j g_{ii}\bigr) \frac{x^j}{r}\,d\sigma_0,
\end{equation}
where $S_r=\{|x|=r\}$, $d\sigma_0$ is the Euclidean surface measure, and $\omega_{n-1}$ is the area of the Euclidean unit sphere. Repeated indices are summed.

The following inequality allows us to control weighted zeroth-order terms by the gradient energy.

\begin{lemma}[Weighted Poincar\'e inequality; {cf. \cite[Theorem~1.3]{Bartnik:1986}}]\label{le:P}
Let $u\in W^{1,2}_{-q}(M)$. Then $\nabla u\in L^2(M)$ and
\[
\|r^{-1}u\|_{L^2(M)}
\le C\|\nabla u\|_{L^2(M)}.
\]
\end{lemma}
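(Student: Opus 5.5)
The plan is to reduce to the model Hardy inequality on the end and to a standard Poincaré-type estimate on the compact core, then glue the two with a cutoff. We may assume $u$ is smooth with $u\in C^\infty_c$ up to the boundary, that is, compactly supported in $M$ but not necessarily vanishing on $\Sigma$. Since $q>\frac{n-2}{2}$, functions in $W^{1,2}_{-q}$ satisfy $r^{-1}u\in L^2$ and $\nabla u\in L^2$ (this is exactly the weighted integrability, $r^{-n/2}\cdot r^{-q}$ weights giving $L^2$). We can approximate $u$ by functions of the form $\chi_R u$, with error controlled in the weighted norms, so the claim that $\nabla u\in L^2$ is immediate and it suffices to prove the inequality for such approximants.

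\emph{Step 1 (the end).} On $\mathbb R^n\setminus B_{R_0}$ I will use the Hardy inequality in the form $\int |x|^{-2}f^2 \le \frac{4}{(n-2)^2}\int|\partial_r f|^2$ for compactly supported $f$ on $\mathbb R^n$. The derivation integrates $\operatorname{div}(x|x|^{-2})f^2=(n-2)|x|^{-2}f^2$ by parts and applies Cauchy--Schwarz. Applying it to $f=\eta u$, where $\eta$ is a cutoff equal to $1$ for $r\ge 2R_0$ and $0$ for $r\le R_0$, gives
\[
\|r^{-1}u\|_{L^2(r\ge2R_0)}\le C\|\nabla u\|_{L^2}+C\|u\|_{L^2(R_0\le r\le 2R_0)}.
\]
Because $g$ is uniformly equivalent to $g_{\mathbb E}$ on the end, the metric plays no role here.

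\emph{Step 2 (the core).} It remains to bound $\|u\|_{L^2(K')}$ by $\|\nabla u\|_{L^2(M)}$, where $K'=\{r\le 2R_0\}$ is compact. The danger is that constants have zero gradient, so no local Poincaré inequality holds on $K'$ alone. The remedy is that constants are excluded by the decay at infinity. I will argue by contradiction and compactness. Suppose there are $u_k$ with $\|r^{-1}u_k\|_{L^2(M)}=1$ and $\|\nabla u_k\|_{L^2}\to0$. By Step 1 and Rellich on the bounded region, a subsequence converges in $L^2_{loc}$ to some $u$ with $\nabla u=0$, so $u\equiv c$ is constant, since $M$ is connected. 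Step 1 applied to the differences $u_k-u_l$ upgrades this to convergence of $r^{-1}u_k$ in $L^2(M)$. Hence $\|r^{-1}c\|_{L^2(M)}=1$. But $r^{-1}\notin L^2$ near infinity when $n\ge3$, since $\int^\infty r^{-2}r^{n-1}\,dr=\infty$, so $c=0$, which contradicts the normalization.

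The main obstacle is making this compactness step rigorous, namely the upgrade from local to global convergence. Step 1 handles the tail uniformly, and this is where $n\ge3$ is used. Alternatively, one can avoid contradiction altogether by using Step 1's Hardy inequality directly on all of $\{r\ge R_0\}$ with boundary term. For radial lines in the end, $u(R_0\theta)=-\int_{R_0}^\infty\partial_r u\,dr$ is bounded by $\|\nabla u\|_{L^2}$ via Cauchy--Schwarz with weight $r^{n-1}$, using $n\ge3$. This controls $u$ on a sphere, and a trace-plus-Poincaré estimate on the compact connected core then controls $u$ there. Either route gives the lemma, as in \cite{Bartnik:1986}.
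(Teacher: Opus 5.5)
Your proof is correct and is essentially the argument behind the cited result. The paper gives no proof beyond citing Bartnik, whose weighted Poincar\'e inequality on the end rests on the same divergence identity for $x|x|^{-2}$ (a Hardy inequality, valid for $n\ge 3$) that you use in Step~1, and your compactness argument on the connected core (or your alternative trace bound on $\{r=R_0\}$) correctly supplies the passage to a manifold with compact core and boundary, which the citation leaves implicit.
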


The coordinate freedom at infinity determines the diffeomorphisms relevant to the variational problem. Since transitions between asymptotically flat charts are asymptotic to Euclidean rigid motions \cite[Corollary~3.2]{Bartnik:1986}, we introduce the space of Euclidean Killing fields
\[
\mathcal Z = \operatorname{span} \left\{ \partial_i\ (1\le i\le n),\quad x^i\partial_j-x^j\partial_i\ (1\le i<j\le n) \right\}.
\]
When $M$ is not Euclidean, we regard these as vector fields on $M$ by fixing smooth extensions across a compact region.

Let $\mathscr D(M)$ be the group of $C^{3,\alpha}_{\mathrm{loc}}$ diffeomorphisms that fix $\Sigma$ pointwise and satisfy
\[
\psi(x)=Ox+a+\eta(x) \quad\text{on the asymptotic end}, \qquad O\in SO(n),\quad a\in\mathbb R^n,\quad \eta\in C^{3,\alpha}_{1-q}(M).
\]
The group acts on metrics by pullback. Its Lie algebra is
\begin{equation}\label{eq:X}
\mathcal X(M) = \left\{ X\in C^{3,\alpha}_{\mathrm{loc}}(M): X|_\Sigma=0,\quad X-Z\in C^{3,\alpha}_{1-q}(M) \text{ for some }Z\in\mathcal Z \right\},
\end{equation}
and the corresponding infinitesimal orbit is
\[
T_g\bigl(\mathscr D(M)\cdot g\bigr) = \big\{ \mathcal L_Xg:X\in\mathcal X(M) \big\}.
\]
We also introduce the Sobolev analogue of $\mathcal X(M)$:
\[
\mathcal X^{2,p}(M) = \left\{ X\in W^{2,p}_{\mathrm{loc}}(M): X|_\Sigma=0,\quad X-Z\in W^{2,p}_{1-q}(M) \text{ for some }Z\in\mathcal Z \right\}.
\]
It is used in the definition of elliptic gauge projections in the next section and in Section~\ref{se:ext} to accommodate less regular deformations $h\in W^{1,p}_{-q}(M)$.

\subsection{Elliptic gauge projection}\label{se:gauge}

Since the variational problem is invariant under the action of $\mathscr D(M)$, we work with metric variations modulo infinitesimal diffeomorphisms. Thus, at a background metric $\bar g$, we identify $h_1\sim h_2$ if and only if $h_1 - h_2 = \mathcal L_X \bar g$ for some $X\in \mathcal X(M)$. Denote  the corresponding equivalence class by $[h]$. Rather than working directly with the resulting quotient space, it is more convenient to choose a distinguished representative of each class. We do this by introducing an elliptic gauge projection. Its image gives a space of gauge-fixed variations, which serves as a local slice transverse to the diffeomorphism directions.

\begin{definition}\label{de:gauge}
Let $(M,\bar g)$ be asymptotically flat. For every metric $g$ sufficiently close to $\bar g$ in $C^{2,\alpha}_{-q}(M)$, suppose that $P_g$ is a bounded linear projection on the space of symmetric $(0,2)$-tensors in $W^{1,p}_{-q}(M)$. We call $P_{\bar g}$ an \emph{elliptic gauge projection} if the family satisfies the following properties, with a uniform constant $C>0$:
\begin{enumerate}[label=(\alph*)]
\item For every $h$, there exists $X\in\mathcal X^{2,p}(M)$ such that $P_gh=h+\mathcal L_Xg$. Moreover, if $h\in C^{2,\alpha}_{-q}(M)$, then $X$ can be chosen in $\mathcal X(M)$.
\item\label{it:trivial}
For every $X\in\mathcal X^{2,p}(M)$, $P_g(\mathcal L_Xg)=0$. 
\item
The projection is bounded in the energy norm:
\begin{equation}\label{eq:elliptic}
\|\nabla (P_gh)\|_{L^2(M)} \le C\|\nabla h\|_{L^2(M)}.
\end{equation}
\item The projections depend continuously on the background metric:
\begin{equation}\label{eq:cont}
\|\nabla(P_gh-P_{\bar g}h)\|_{L^2(M)} \le C\|g-\bar g\|_{C^2_{-q}(M)} \|\nabla h\|_{L^2(M)}.
\end{equation}
\item There are neighborhoods $\mathcal V\subset\mathcal M^{2,\alpha}_{-q}(M)$ of $\bar g$ and $\mathcal U\subset\mathcal M^{2,\alpha}_{-q}(M)/\mathscr D(M)$ of $[\bar g]$ such that every $[g]\in\mathcal U$ has a unique representative $\hat g\in\mathcal V$ satisfying $P_{\bar g}(\hat g-\bar g)=\hat g-\bar g$.  The set $\mathcal S_{\bar g}:=\left\{ g\in\mathcal V: P_{\bar g}(g-\bar g)=g-\bar g \right\}$  is the corresponding local slice.
\end{enumerate}
\end{definition}

\begin{example}[Harmonic-orthogonal gauge]\label{ex:ho}
The harmonic-orthogonal gauge combines the Bianchi condition with an orthogonality condition that removes the remaining gauge directions. Define
\[
\beta_gh:=\Div_gh-\frac12d\tr_gh, \qquad \mathcal K_g := \left\{ Z\in\mathcal X(M):\beta_g\mathcal L_Zg=0 \right\}.
\]
A representative $P_gh=h+\mathcal L_Xg$ is selected by
\[
\beta_g(P_gh)=0, \qquad \int_Mr^{-2} \langle P_gh,\mathcal L_Zg\rangle_g\,d\mu_g=0 \quad\text{for every }Z\in\mathcal K_g.
\]
When these conditions can be solved with the uniform estimates and local slice property in Definition~\ref{de:gauge}, they define an elliptic gauge projection.

Let $\mathcal S_g$ be the corresponding local slice. If $M$ is boundaryless and $g$ is Ricci flat, then $T_g\mathcal S_g\cap T_g\mathscr C_g(M)$ consists of
transverse-traceless tensors satisfying the orthogonality condition. At the Euclidean metric on $\mathbb R^n$, $\mathcal K_{g_{\mathbb E}}$ consists of Euclidean Killing fields, so the orthogonality condition is vacuous and the space consists precisely of transverse-traceless tensors.
\end{example}

The next lemma allows us to test strict stability on gauge-fixed representatives rather than on the quotient space.

\begin{lemma}\label{le:equivalent}
Let $(M,\bar g,\bar u)$ be an asymptotically flat static vacuum triple. Then the following are equivalent:
\begin{enumerate}
\item\label{it:1}
$\bar g$ is a strictly stable critical point of $\mathcal F_{\bar u}$ on $\mathscr C_{\bar g}(M)$.

\item\label{it:2}
For some elliptic gauge projection $P_{\bar g}$, there exists $\lambda>0$ such that
\[
\left.D^2\mathcal F_{\bar u}\right|_{\bar g}(h,h) \ge \lambda\|\nabla P_{\bar g}h\|_{L^2(M)}^2
\]
for every $h\in T_{\bar g}\mathscr C_{\bar g}(M)$.

\item For some local slice $\mathcal S_{\bar g}$ associated with an elliptic gauge projection, there exists $\lambda>0$ such that
\[
\left.D^2\mathcal F_{\bar u}\right|_{\bar g}(h,h) \ge \lambda\|\nabla h\|_{L^2(M)}^2
\]
for every $h\in T_{\bar g}\mathscr C_{\bar g}(M) \cap T_{\bar g}\mathcal S_{\bar g}$.
\end{enumerate}
\end{lemma}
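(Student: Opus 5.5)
The plan is to prove that the energy norm of the gauge class is comparable to the energy of the projected representative, and that the Hessian depends only on the gauge class. The three conditions then reduce to one another.

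\emph{Step 1: norm equivalence.} Fix an elliptic gauge projection $P_{\bar g}$ and let $h\in T_{\bar g}\mathscr C_{\bar g}(M)\subset C^{2,\alpha}_{-q}(M)$. By property (a) of Definition~\ref{de:gauge}, $P_{\bar g}h=h+\mathcal L_{X_0}\bar g$ for some $X_0\in\mathcal X(M)$, so directly from the definition
\[
\|[h]\|_E\le\|\nabla P_{\bar g}h\|_{L^2(M)}.
\]
Conversely, take any $X\in\mathcal X(M)$. By property (b) and linearity, $P_{\bar g}(h+\mathcal L_X\bar g)=P_{\bar g}h$. Property (c) then gives
\[
\|\nabla P_{\bar g}h\|_{L^2(M)}\le C\|\nabla(h+\mathcal L_X\bar g)\|_{L^2(M)}.
\]
Taking the infimum over $X$ yields $\|\nabla P_{\bar g}h\|_{L^2(M)}\le C\|[h]\|_E$. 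One technical point must be checked here: $\mathcal X(M)$ has to sit inside the class of fields on which (b) and (c) are available, i.e.\ $\mathcal X(M)\subset\mathcal X^{2,p}(M)$, and $h+\mathcal L_X\bar g$ has to be admissible in (c). Since $X-Z\in C^{3,\alpha}_{1-q}$ and $\mathcal L_Z\bar g$ decays like $r^{-q}$ (because $Z$ is a Euclidean Killing field), this is an inclusion of weighted Hölder spaces into weighted Sobolev spaces. If the rates do not match exactly, I would lose an arbitrarily small amount of decay, or argue by density in the energy norm.

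\emph{Step 2: gauge invariance of the Hessian.} First I would check that $\mathcal L_X\bar g\in T_{\bar g}\mathscr C_{\bar g}(M)$ for every $X\in\mathcal X(M)$.
\begin{itemize}
\item In the interior, $R'_{\bar g}(\mathcal L_X\bar g)=X(R_{\bar g})=0$.
\item On $\Sigma$, $X=0$, so tangential derivatives of $X$ vanish and $(\mathcal L_X\bar g)^\intercal=0$.
\item Also on $\Sigma$, $H'_{\bar g}(\mathcal L_X\bar g)=0$: the flow of $X$ fixes $\Sigma$ pointwise, and mean curvature is a diffeomorphism invariant of the pair (metric, hypersurface).
\end{itemize}
Next, $\mathcal F_{\bar u}$ restricted to $\mathscr C_{\bar g}(M)$ is $\mathscr D(M)$-invariant, so its first derivative along orbit directions vanishes at every point of $\mathscr C_{\bar g}(M)$. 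Differentiating this identity along a curve in $\mathscr C_{\bar g}(M)$ with tangent $k$, and using that $\bar g$ is a critical point, gives
\[
D^2\mathcal F_{\bar u}|_{\bar g}(\mathcal L_X\bar g,k)=0\qquad\text{for all }k\in T_{\bar g}\mathscr C_{\bar g}(M).
\]
Hence $D^2\mathcal F_{\bar u}|_{\bar g}(h,h)=D^2\mathcal F_{\bar u}|_{\bar g}(P_{\bar g}h,P_{\bar g}h)$, and $P_{\bar g}h$ again lies in $T_{\bar g}\mathscr C_{\bar g}(M)$. I expect this to be the main obstacle. To make the differentiation rigorous, I would either realize $\mathcal L_X\bar g$ as the tangent of a genuine curve of pullbacks $\psi_t^*g$ inside $\mathscr C_{\bar g}(M)$, or verify the vanishing directly from the explicit second variation formula of Section~\ref{se:rt-variation} via integration by parts. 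The second route requires controlling the boundary terms on $\Sigma$ and at infinity, which involve the ADM mass term.

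\emph{Step 3: conclusion.} By Step 1, (1)$\Leftrightarrow$(2), with $\lambda$ changing by the factor $C^2$. For (2)$\Rightarrow$(3), note that the slice $\mathcal S_{\bar g}$ is cut out by the linear condition $P_{\bar g}(g-\bar g)=g-\bar g$. So $T_{\bar g}\mathcal S_{\bar g}=\{h:P_{\bar g}h=h\}$, and on this space the inequality in (2) is exactly the inequality in (3). For (3)$\Rightarrow$(2), take $h\in T_{\bar g}\mathscr C_{\bar g}(M)$. By Step 2, $P_{\bar g}h\in T_{\bar g}\mathscr C_{\bar g}(M)\cap T_{\bar g}\mathcal S_{\bar g}$, since $P_{\bar g}^2=P_{\bar g}$. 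Applying (3) to $P_{\bar g}h$ and using the gauge invariance of the Hessian from Step 2 gives (2).
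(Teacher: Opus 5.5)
Your proposal is correct and follows the paper's approach. The paper proves (1)$\Rightarrow$(2) exactly as in your Step 1: it uses $P_{\bar g}(h+\mathcal L_X\bar g)=P_{\bar g}h$ together with the energy bound \eqref{eq:elliptic}, then takes the infimum over $X\in\mathcal X(M)$. It declares the remaining implications to follow from the definitions, tacitly relying on the invariance $D^2\mathcal F_{\bar u}|_{\bar g}(\mathcal L_X\bar g,k)=0$ for $k\in T_{\bar g}\mathscr C_{\bar g}(M)$ that is noted in the introduction; your Step 2 simply makes this explicit for (3)$\Rightarrow$(2).
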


\begin{proof}
We prove \eqref{it:1} $\Rightarrow$ \eqref{it:2}; the remaining implications follow from the definitions. For every $X\in\mathcal X(M)$,
\[
\|\nabla P_{\bar g}h\|_{L^2(M)} = \|\nabla P_{\bar g}(h+\mathcal L_X\bar g)\|_{L^2(M)} \le C\|\nabla(h+\mathcal L_X\bar g)\|_{L^2(M)}.
\]
Taking the infimum over $X\in\mathcal X(M)$ and applying \eqref{it:1} yields \eqref{it:2}.
\end{proof}

\subsection{Linearization formulas}

We record the linearizations used in the constraint space and the variations of $\mathcal F_u$. For a smooth family $g(s)$ with $g(0)=g$ and $h=g'(0)$, write
$\mathcal T'_g(h) = \left.D\mathcal T\right|_g(h) := \left.\frac{d}{ds}\right|_{s=0}\mathcal T_{g(s)}$.  Writing $\gamma=g^\intercal$, the volume variations are
\[
(d\mu_g)'(h)=\frac12 (\tr_gh)\,d\mu_g, \qquad (d\sigma_\gamma)'(h) =\frac12 (\tr_\gamma h^\intercal) \,d\sigma_\gamma.
\]

We use the curvature convention $R_{ijk\ell}= g\bigl((\nabla_i\nabla_j-\nabla_j\nabla_i) \partial_k,\partial_\ell\bigr)$,  $\Ric_{ij}=g^{k\ell}R_{kij\ell}$,  and define the Lichnerowicz Laplacian by
\[
(\Delta_Lh)_{ij} = \Delta_gh_{ij} +2R_{ik\ell j}h^{k\ell} -R_i{}^kh_{kj} -R_j{}^kh_{ik}.
\]
We write $\Delta_Lh=\Delta_gh+\mathrm{Rm}*h$. With the Bianchi operator $\beta_g$ defined above, the curvature linearizations are
\begin{align*}
\Ric'_g(h) &= -\frac12\Delta_Lh +\frac12\mathcal L_{(\beta_gh)^\sharp}g,\\
R'_g(h) &= -\Delta_g\tr_gh +\Div_g\Div_gh -\langle\Ric_g,h\rangle_g.
\end{align*}

The adjoint of the scalar curvature linearization is the link between the constraint and the static vacuum equations. Writing $L_gh:=R'_g(h)$, its formal $L^2$-adjoint is
\[
L_g^*u = \nabla_g^2u-(\Delta_gu)g-u\Ric_g.
\]
Thus the static vacuum equation in \eqref{eq:intro-static} is $L_g^*u=0$. Taking its divergence and trace, and using asymptotic flatness and $u\to1$, gives
\[
R_g=0, \qquad \Delta_gu=0, \qquad   \nabla_g^2u=u\Ric_g.
\]

For a fixed smooth function $u$, the Hessian and Laplacian variations are
\begin{align*}
\bigl((\nabla^2)'_g(h)u\bigr)_{ij} &= -\frac12 \left( \nabla_i h_j{}^k +\nabla_j h_i{}^k -\nabla^k h_{ij} \right)\nabla_ku,\\ 
\Delta'_g(h)u &= -\langle h,\nabla_g^2u\rangle_g -\langle\beta_gh,du\rangle_g.
\end{align*}

Along $\Sigma$, let $A_g(X,Y):=g(\nabla_X\nu_g,Y)$ and $H_g:=\tr_\gamma A_g=\Div_g\nu_g$.  Writing $\omega:=h(\nu_g,\cdot)^\intercal$, the mean curvature
linearization is
\[
H'_g(h) = \frac12\nu_g\bigl(\tr_\gamma h^\intercal\bigr) -\Div_\gamma\omega -\frac12h(\nu_g,\nu_g)H_g.
\]

The following identities are obtained by integration by parts.
\begin{lemma}
Let $h$ be a symmetric $(0,2)$-tensor and $v$ a function in $C^2_{-q}(M)$. Then
\begin{align}
\int_MvR'_g(h)\,d\mu_g &= \int_M \left[ \langle\nabla_gv,\nabla_g\tr_gh\rangle_g-(\Div_gh)(\nabla_gv) -v\langle\Ric_g,h\rangle_g \right]\,d\mu_g \notag\\
&\quad+ \int_\Sigma v\left(2H'_g(h)+\langle A_g,h^\intercal\rangle_\gamma +\Div_\gamma\omega \right)\,d\sigma_\gamma \label{eq:ibp1}\\
&= \int_M\langle h,L_g^*v\rangle_g\,d\mu_g+ \int_\Sigma \left[ \langle vA_g-\nu_g(v)\gamma,h^\intercal\rangle_\gamma +2vH'_g(h) \right]\,d\sigma_\gamma.
\label{eq:ibp2}
\end{align}
 \end{lemma}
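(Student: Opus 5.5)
The plan is to integrate by parts twice, starting from the pointwise formula for $R'_g(h)$:
\[
R'_g(h) = -\Delta_g\tr_gh +\Div_g\Div_gh -\langle\Ric_g,h\rangle_g .
\]
Multiply by $v$ and integrate over $M$. Because $v\in C^2_{-q}$, $h\in C^2_{-q}$ and $2q>n-2$, the flux integrals over large spheres $S_r$ are $O(r^{-2q-1+n-1})\to0$. Consequently only the boundary $\Sigma$ contributes. On $\Sigma$ the outward normal of $M$ is $-\nu_g$, since $\nu_g$ points toward the end. In the rest of the argument I write $\nu=\nu_g$ and $\gamma=g^\intercal$.

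\textbf{First identity \eqref{eq:ibp1}.} Integrating by parts once in each second-order term gives
\[
\int_M v\bigl(-\Delta\tr h+\Div\Div h\bigr)
=\int_M\bigl[\langle\nabla v,\nabla\tr h\rangle-(\Div h)(\nabla v)\bigr]
+\int_\Sigma v\bigl(\nu(\tr_gh)-(\Div h)(\nu)\bigr).
\]
It remains to express $\nu(\tr_gh)-(\Div_gh)(\nu)$ along $\Sigma$ in terms of $H'_g(h)$, $A_g$ and $\omega$. Take an orthonormal frame $\{e_a,\nu\}$. Then $\tr_gh=\tr_\gamma h^\intercal+h(\nu,\nu)$ and
\[
(\Div h)(\nu)=(\nabla_\nu h)(\nu,\nu)+\sum_a(\nabla_{e_a}h)(e_a,\nu).
\]
Expand $\sum_a(\nabla_{e_a}h)(e_a,\nu)=\Div_\gamma\omega+H_gh(\nu,\nu)-\langle A_g,h^\intercal\rangle$; the signs follow from $\nabla_{e_a}\nu=A(e_a)$ and $\nabla_{e_a}e_a=\nabla^\Sigma_{e_a}e_a-A_{aa}\nu$. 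The $\nu$-derivative terms combine to $\nu(\tr_\gamma h^\intercal)$, up to terms $h(\nabla_\nu e_a,e_a)$ which cancel by symmetry of $h$ and skewness of the connection coefficients. This yields
\[
\nu(\tr h)-(\Div h)(\nu)=\nu(\tr_\gamma h^\intercal)-\Div_\gamma\omega-H_gh(\nu,\nu)+\langle A_g,h^\intercal\rangle .
\]
Now substitute the formula for $H'_g(h)$, using $\nu(\tr_\gamma h^\intercal)=2H'_g(h)+2\Div_\gamma\omega+H_gh(\nu,\nu)$. The right-hand side becomes
\[
2H'_g(h)+\Div_\gamma\omega+\langle A_g,h^\intercal\rangle_\gamma ,
\]
which is exactly \eqref{eq:ibp1}.

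\textbf{Second identity \eqref{eq:ibp2}.} Integrate by parts once more in the bulk terms of \eqref{eq:ibp1}. Moving derivatives off $h$ gives
\[
\int_M\langle\nabla v,\nabla\tr h\rangle=-\int_M\Delta v\,\tr h-\int_\Sigma\nu(v)\tr_gh,
\]
\[
-\int_M(\Div h)(\nabla v)=\int_M\langle h,\nabla^2v\rangle+\int_\Sigma h(\nu,\nabla v).
\]
The bulk terms combine with $-v\langle\Ric,h\rangle$ into $\langle h,L^*_gv\rangle$. On $\Sigma$, split $\nabla v=\nabla^\Sigma v+\nu(v)\nu$, so that $h(\nu,\nabla v)=\omega(\nabla^\Sigma v)+\nu(v)h(\nu,\nu)$. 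The $h(\nu,\nu)$ pieces cancel against the corresponding part of $-\nu(v)\tr_gh$, leaving $-\nu(v)\tr_\gamma h^\intercal=-\langle\nu(v)\gamma,h^\intercal\rangle$. Since $\Sigma$ is closed, $\int_\Sigma\omega(\nabla^\Sigma v)=-\int_\Sigma v\Div_\gamma\omega$, and this cancels the $v\Div_\gamma\omega$ term from \eqref{eq:ibp1}. The boundary integrand is therefore $\langle vA_g-\nu(v)\gamma,h^\intercal\rangle+2vH'_g(h)$, as claimed.

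The main obstacle is the boundary bookkeeping in the first step, namely the sign conventions for the normal and the second fundamental form. I will check it against the round sphere in $\mathbb R^n$ with $h=g$ ($R'=0$), and with $h=\mathcal L_Xg$.
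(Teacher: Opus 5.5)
Your proof is correct, and it is the integration-by-parts computation the paper has in mind; the paper states these identities without further detail. Your sign bookkeeping with the outward normal $-\nu_g$ checks out, as do the boundary identity $\nu(\tr_g h)-(\Div_g h)(\nu)=2H'_g(h)+\Div_\gamma\omega+\langle A_g,h^\intercal\rangle_\gamma$ and the cancellation of $\int_\Sigma \omega(\nabla^\Sigma v)$ against $\int_\Sigma v\Div_\gamma\omega$. One small point: $\sum_a h(\nabla_\nu e_a,e_a)$ also has a normal part $-\omega(\nabla_\nu\nu)$, which vanishes not by skewness alone but because $\nu$ is extended geodesically ($\nabla_\nu\nu=0$), as in the Gaussian coordinates implicit in the paper's formula for $H'_g$ under which $\nu(\tr_\gamma h^\intercal)=\nu(\tr_g h)-(\nabla_\nu h)(\nu,\nu)$.
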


\subsection{Variations of the Regge--Teitelboim functional} \label{se:rt-variation}

To relate the ADM mass to the static vacuum equations, we use the Regge--Teitelboim functional with the Gibbons--Hawking--York boundary term:
\[
\mathcal F(g,u) = 2(n-1)\omega_{n-1}m_{\mathrm{ADM}}(g) -\int_MuR_g\,d\mu_g +2\int_\Sigma uH_g\,d\sigma_\gamma.
\]
For $g\in\mathcal M^{2,\alpha}_{-q}(M)$ and $u-1\in C^{2,\alpha}_{-q}(M)$, this functional is defined even when $R_g\notin L^1(M)$ by combining the ADM flux
and scalar curvature integral before taking the limit at infinity; see \cite[Section~5]{Huang-Lee:2020}. When $u$ is fixed, we write $\mathcal F_u(g):=\mathcal F(g,u)$.

To express its variations, set
\[
\mathcal E_u(g) := L_g^*u+\frac12uR_gg, \qquad \mathcal B_u(g) := u(A_g-H_g\gamma)-\nu_g(u)\gamma.
\]

\medskip
\noindent\textbf{First variation.}
By \cite[Proposition~3.7]{Anderson-Khuri:2013} and \cite[Lemma~3.1]{Miao:2007},
\[
\left.D\mathcal F_u\right|_g(h) = -\int_M\langle\mathcal E_u(g),h\rangle_g\,d\mu_g -\int_\Sigma \langle\mathcal B_u(g),h^\intercal\rangle_\gamma \,d\sigma_\gamma.
\]
In particular, the boundary term vanishes for variations preserving the induced metric.

\medskip
\noindent\textbf{Second variation.}
We use the Hessian
\[
\left.D^2\mathcal F_u\right|_g(h_1,h_2) := \left.\frac{\partial^2}{\partial t\,\partial s}\right|_{t=s=0} \mathcal F_u(g+sh_1+th_2).
\]
Equivalently, if $g_t = g+ th_2$, then
\begin{align*}
	\left. D^2 \mathcal F_u \right|_g (h_1, h_2) &= -\left. \frac{d}{dt} \right|_{t=0}\int_M \langle \mathcal E_u (g_t), h_1\rangle_{g_t} \, d\mu_{g_t} \\
	&\quad - \left. \frac{d}{dt} \right|_{t=0} \int_\Sigma \left(\langle \mathcal B_u (g_t) , h_1^\intercal \rangle_{\gamma_t} \right)\, d\sigma_{\gamma_t}.
\end{align*}
For the perturbation arguments below, it is useful to denote
a bilinear form:
\begin{equation}\label{eq:Q}
\begin{split}
Q_{g,u}(h_1,h_2) &:= -\int_M \left\langle \left.D\mathcal E_u\right|_g(h_2),h_1 \right\rangle_g\,d\mu_g\\
&\quad+\int_M \left[ 2\langle\mathcal E_u(g)\circ h_2,h_1\rangle_g -\frac12(\tr_gh_2) \langle\mathcal E_u(g),h_1\rangle_g \right]\,d\mu_g. 
\end{split}
\end{equation}
Thus
\[
Q_{g,u}(h_1,h_2) = \left.D^2\mathcal F_u\right|_g(h_1,h_2)
\]
whenever $h_1^\intercal=h_2^\intercal=0$.

At a static background, the terms involving $\mathcal E_u(g)$ vanish. We summarize the resulting observations below.
\begin{lemma}\label{le:sec}
The metric $\bar g$ is a critical point of $\mathcal F_u$ among metrics with induced boundary metric $\bar g^\intercal$ if and only if $(\bar g,u)$ is a static vacuum pair.

At such a pair, let $g(s)$ be a smooth family with $g(0)=\bar g$ and $g(s)^\intercal=\bar g^\intercal$, and set $h=g'(0)$. Then
\begin{align*}
\left.\frac{d^2}{ds^2}\right|_{s=0}\mathcal F_u(g(s)) &= \left.D^2\mathcal F_u\right|_{\bar g}(h,h)\\
&= \int_M \Big\langle u\Ric'_{\bar g}(h)-(\nabla^2)'_{\bar g}(h)u +\left( \Delta'_{\bar g}(h)u-\frac12uR'_{\bar g}(h) \right)\bar g,\, h \Big\rangle_{\bar g}\,d\mu_{\bar g}.
\end{align*}
\end{lemma}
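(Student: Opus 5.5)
The lemma has two parts: the characterization of critical points, and the second-variation formula at a static pair. Both come from the first-variation formula
\[
\left.D\mathcal F_u\right|_g(h)=-\int_M\langle\mathcal E_u(g),h\rangle_g\,d\mu_g-\int_\Sigma\langle\mathcal B_u(g),h^\intercal\rangle_\gamma\,d\sigma_\gamma .
\]

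\emph{Critical points.} For variations with $h^\intercal=0$ the boundary term in the first-variation formula drops out. Hence $\bar g$ is critical if and only if $\int_M\langle\mathcal E_u(\bar g),h\rangle\,d\mu=0$ for all such $h$. This class contains all compactly supported $h$ in the interior, so criticality is equivalent to $\mathcal E_u(\bar g)=0$.

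It remains to show that $\mathcal E_u(\bar g)=0$ is equivalent to the static vacuum equations.
\begin{itemize}
\item Taking the trace gives $\operatorname{tr}L^*_{\bar g}u=-(n-1)\Delta u-uR$. Hence $-(n-1)\Delta u+\frac{n-2}{2}uR=0$.
\item Taking the divergence, the standard identity $\Div L_g^*u=-\frac12u\,dR$ gives $\Div\mathcal E_u=\frac12R\,du$, and this must vanish. So $R\,du=0$.
\item A priori, on the set where $du\neq0$ we get $R=0$.
\end{itemize}
To conclude $R=0$ everywhere, rewrite the equation as $L^*u=-\frac12uRg$. Combined with the trace identity, this shows that $u$ satisfies an elliptic equation, and analyticity plus unique continuation then gives $R\equiv0$. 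An alternative route: $R$ is locally constant on the open set $\{du=0\}$, and it must vanish there as well, since the trace relation gives $uR=0$ on that set and $u\neq0$ on an open dense set because $u\to1$.

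In either case $R=0$, so $L^*u=0$. Conversely, if $L^*u=0$ then $R=0$ as noted earlier, and therefore $\mathcal E_u=0$.

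\emph{Second variation.} Differentiate $s\mapsto D\mathcal F_u|_{g(s)}(g'(s))$ at $s=0$. The second derivative splits as
\[
\frac{d^2}{ds^2}\mathcal F_u(g(s))=D^2\mathcal F_u|_{\bar g}(h,h)+D\mathcal F_u|_{\bar g}(g''(0)).
\]
The last term vanishes. Indeed, $g''(0)^\intercal=0$ because $g(s)^\intercal$ is fixed, so the boundary integral drops, and $\mathcal E_u(\bar g)=0$ kills the interior integral.

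For $D^2\mathcal F_u|_{\bar g}(h,h)$, use the formula with $g_t=\bar g+th$.
\begin{itemize}
\item The boundary term is $\langle\mathcal B_u(g_t),h^\intercal\rangle$ with $h^\intercal=0$, so it vanishes identically in $t$.
\item In the interior term, the derivatives falling on the metric in the inner product and on the volume form multiply $\mathcal E_u(\bar g)=0$. This is exactly the statement $Q=D^2\mathcal F_u$ at static pairs.
\end{itemize}
What remains is
\[
-\int_M\langle D\mathcal E_u|_{\bar g}(h),h\rangle\,d\mu .
\]

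To evaluate $D\mathcal E_u|_{\bar g}(h)$, expand $\mathcal E_u(g)=\nabla^2u-(\Delta u)g-u\Ric+\frac12uRg$ and linearize:
\[
D\mathcal E_u(h)=(\nabla^2)'(h)u-(\Delta'(h)u)\bar g-(\Delta u)h-u\Ric'(h)+\tfrac12uR'(h)\bar g+\tfrac12uRh .
\]
At the static pair $\Delta u=0$ and $R=0$, so the terms $-(\Delta u)h$ and $\frac12uRh$ vanish. Negating the remaining terms gives precisely the claimed integrand,
\[
u\Ric'(h)-(\nabla^2)'(h)u+\bigl(\Delta'(h)u-\tfrac12uR'(h)\bigr)\bar g .
\]

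The main point needing care is the critical-point equivalence, specifically obtaining $R=0$ from $\mathcal E_u=0$ without assuming it. The second-variation part is bookkeeping.
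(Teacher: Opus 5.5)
Your proposal is correct and is essentially the paper's argument: criticality comes from the first-variation formula, and at a static pair every term in the second variation except $-\int_M\langle D\mathcal E_u|_{\bar g}(h),h\rangle\,d\mu_{\bar g}$ either multiplies $\mathcal E_u(\bar g)=0$ or $h^\intercal=0$, which the paper states without further detail. The one step to tighten is the equivalence $\mathcal E_u(\bar g)=0\Leftrightarrow L^*_{\bar g}u=0$: that $u$ is a nonzero constant on each component of the interior of $\{du=0\}$ does not follow from $u\to1$ alone but from unique continuation, e.g.\ via $\nabla^2u=u\bigl(\Ric-\frac{R}{2(n-1)}\bar g\bigr)$, which forces $u\equiv0$ on $M$ if $u=du=0$ at a point (your appeal to analyticity in the first route is unnecessary).
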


\subsection{The Euclidean space and the scalar-flat constraint}

Euclidean space provides the basic example of strict stability and illustrates the role of the scalar-flat constraint, removing the conformal component of a deformation.

Recall that every symmetric $(0,2)$-tensor $h\in W^{2,p}_{-q}(\mathbb R^n)$ admits the York-type decomposition
\[
h=\varphi g_{\mathbb E}+\mathcal L_Xg_{\mathbb E}+\tau,
\]
where $\varphi\in W^{2,p}_{-q}(\mathbb R^n)$, $X\in W^{3,p}_{1-q}(\mathbb R^n)$, and $\tau\in W^{2,p}_{-q}(\mathbb R^n)$ is transverse-traceless:
\[
\Div_{g_{\mathbb E}}\tau=0, \qquad \tr_{g_{\mathbb E}}\tau=0.
\]
Since $R'_{g_{\mathbb E}}(h) =-(n-1)\Delta_{g_{\mathbb E}}\varphi$,  the linearized scalar-flat constraint and the decay of $\varphi$ imply $\varphi=0$. Thus every admissible deformation has the form
\begin{equation}\label{eq:decom}
h=\mathcal L_Xg_{\mathbb E}+\tau.
\end{equation}

\begin{lemma}[{Choquet-Bruhat and Marsden
\cite[Section~3]{Choquet-Bruhat-Marsden:1976}}]
\label{le:stable}
The Euclidean metric on $\mathbb R^n$ is strictly stable among the scalar-flat
constraint. 
\end{lemma}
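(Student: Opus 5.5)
We plan to compute the second variation directly from Lemma~\ref{le:sec} at $(\bar g,\bar u)=(g_{\mathbb E},1)$ on $M=\mathbb R^n$ (no boundary). There all curvature terms vanish and $\nabla \bar u=0$, so $(\nabla^2)'(h)\bar u=0$ and $\Delta'(h)\bar u=0$. For $h\in T_{g_{\mathbb E}}\mathscr C_{g_{\mathbb E}}(\mathbb R^n)$ we also have $R'(h)=0$. Lemma~\ref{le:sec} then reduces to
\[
\left.D^2\mathcal F_1\right|_{g_{\mathbb E}}(h,h)=\int_{\mathbb R^n}\langle \Ric'(h),h\rangle\,dx=\int_{\mathbb R^n}\Big\langle -\tfrac12\Delta h+\tfrac12\mathcal L_{(\beta h)^\sharp}g_{\mathbb E},\,h\Big\rangle\,dx .
\]
By gauge invariance (the Hessian vanishes in the $\mathcal L_X g_{\mathbb E}$ directions and these are null directions of the form at a critical point), we may replace $h$ by its transverse-traceless part $\tau$ from the decomposition \eqref{eq:decom}. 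To justify this, we note that $D^2\mathcal F_1(h,\mathcal L_Xg_{\mathbb E})=0$, which follows by differentiating the identity $D\mathcal F_1|_{\phi_t^*g}(\phi_t^*k)=D\mathcal F_1|_g(k)$. Alternatively, it follows directly from $\Ric'(\mathcal L_X g)=\mathcal L_X\Ric=0$ together with the symmetry of the Hessian. Hence $D^2\mathcal F_1(h,h)=D^2\mathcal F_1(\tau,\tau)$.

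For $\tau$ with $\Div\tau=0$ and $\tr\tau=0$ we have $\beta\tau=0$. Integrating by parts then gives
\[
D^2\mathcal F_1(\tau,\tau)=\tfrac12\int_{\mathbb R^n}|\nabla\tau|^2\,dx .
\]
The boundary terms at infinity vanish: they are $O(r^{n-1}\cdot r^{-q}\cdot r^{-q-1})$, which tends to $0$ since $2q>n-2$. To be careful, we would do the integration by parts on $B_r$ and let $r\to\infty$, using that $h$ (hence $\tau$) lies in $C^{2,\alpha}_{-q}$ or $W^{2,p}_{-q}$. Since $\tau=h-\mathcal L_Xg_{\mathbb E}$, we obtain
\[
\|[h]\|_E^2\le \|\nabla\tau\|_{L^2}^2=2\,D^2\mathcal F_1(h,h),
\]
which is strict stability with $\lambda=\tfrac12$.

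The main obstacle is the function-space bookkeeping for the gauge vector $X$. The York decomposition yields $X\in W^{3,p}_{1-q}$, whereas $\|[h]\|_E$ is an infimum over $\mathcal X(M)$, which requires $C^{3,\alpha}$ regularity and allows a Killing part. We would handle this with elliptic regularity: $X$ solves a vector Laplace-type equation $\Div(\mathcal L_X g-(\tfrac{2}{n}\Div X)g)=\Div h$-type with $C^{1,\alpha}_{-q-1}$ right-hand side, so weighted Schauder theory gives $X\in C^{3,\alpha}_{1-q}\subset\mathcal X(\mathbb R^n)$. If that fails, a density argument in the energy norm suffices, because the infimum is continuous under $\|\nabla\cdot\|_{L^2}$ approximations of $\mathcal L_Xg_{\mathbb E}$. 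A secondary point is checking that the cross terms $\int\langle\Ric'(\mathcal L_Xg),\tau\rangle$ and $\int\langle\Ric'(\tau),\mathcal L_Xg\rangle$ vanish including contributions at infinity. The first is identically zero; the second equals $-\tfrac12\int\langle\Delta\tau,\mathcal L_Xg\rangle=\int\langle\Div\Delta\tau,X\rangle$ up to a boundary flux of order $r^{n-1}r^{-q-2}r^{-q}\to0$, and it vanishes because $\Div\tau=0$.
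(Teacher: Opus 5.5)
Your proposal is correct and follows essentially the same route as the paper. Both arguments use Lemma~\ref{le:sec} and diffeomorphism invariance to reduce to the transverse-traceless part $\tau$ in \eqref{eq:decom}, then use $\Ric'_{g_{\mathbb E}}(\tau)=-\frac12\Delta_{g_{\mathbb E}}\tau$ and integrate by parts to get $\frac12\|\nabla\tau\|_{L^2}^2$; your direct bound $\|[h]\|_E\le\|\nabla\tau\|_{L^2}$ replaces the paper's appeal to Lemma~\ref{le:equivalent}. One small slip: since $X=O(r^{1-q})$, the flux in the cross term is $O(r^{n-1}r^{-q-2}r^{1-q})=O(r^{n-2-2q})$ rather than the order you wrote, but it still tends to zero because $2q>n-2$.
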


\begin{proof}
By Lemma~\ref{le:equivalent},  we choose the transverse-traceless representative $P_{g_{\mathbb E}}h=\tau$ in \eqref{eq:decom}. The transverse-traceless conditions give
\[
\Ric'_{g_{\mathbb E}}(\tau) =-\frac12\Delta_{g_{\mathbb E}}\tau.
\]
By diffeomorphism invariance and Lemma~\ref{le:sec},
\begin{align*}
\left.D^2\mathcal F_1\right|_{g_{\mathbb E}}(h,h)= \left.D^2\mathcal F_1\right|_{g_{\mathbb E}}(\tau,\tau)= -\frac12\int_{\mathbb R^n}\langle\Delta_{g_{\mathbb E}}\tau,\tau\rangle_{g_{\mathbb E}}\,dx= \frac12\int_{\mathbb R^n}|\nabla\tau|^2\,dx.
\end{align*}
\end{proof}

The scalar-flat constraint is essential for this conclusion about $\mathcal F_1$. Away from the constraint, $\mathcal F_1$ is no longer a fixed multiple of the ADM mass. In fact, a conformal deformation can preserve nonnegative scalar curvature while decreasing $\mathcal F_1$, as the following example shows.

\begin{example}
Set $c_n:=4(n-1)/(n-2)$, and choose a nonzero function $f\in C^\infty_c(\mathbb R^n)$ with $f\ge0$. Let $w$ solve $\Delta_{g_{\mathbb E}}w=-f, w(x)\to 0 $ as $|x|\to \infty$.   Then $w>0$. For $s\ge0$ sufficiently small, set $\phi_s:=1+sw,  g_s:=\phi_s^{4/(n-2)}g_{\mathbb E}$.  The conformal scalar curvature formula gives
\[
R_{g_s} = c_ns\phi_s^{-(n+2)/(n-2)}f \ge 0.
\]
On the other hand, the conformal mass and volume formulas yield
\begin{align*}
2(n-1)\omega_{n-1}m_{\ADM}(g_s) &= c_ns\int_{\mathbb R^n}f\,dx,\\
\int_{\mathbb R^n}R_{g_s}\,d\mu_{g_s} &= c_ns\int_{\mathbb R^n}f\,dx +c_ns^2\int_{\mathbb R^n}wf\,dx.
\end{align*}
The linear terms cancel in $\mathcal F_1$, leaving
\[
\mathcal F_1(g_s) = -c_ns^2\int_{\mathbb R^n}wf\,dx = -c_ns^2\int_{\mathbb R^n}|\nabla w|^2\,dx <0 = \mathcal F_1(g_{\mathbb E})
\]
for every sufficiently small $s>0$. Thus $g_{\mathbb E}$ is not a local minimizer of $\mathcal F_1$ in the class $R_g\ge0$, even though it minimizes the ADM mass.

The initial variation $h=g'_0=\frac{4}{n-2}wg_{\mathbb E}$ satisfies $R'_{g_{\mathbb E}}(h)=c_nf\not\equiv0$.  It is therefore not tangent to the scalar-flat constraint,
so Lemma~\ref{le:stable} does not apply. 
\end{example}

\section{Proof of Theorem~\ref{Th:minimizer}}\label{se:min}

The following decomposition theorem for symmetric $(0,2)$-tensors plays a crucial role in the proof of Theorem~\ref{Th:minimizer}. Recall that the formal adjoint of the linearized scalar curvature operator is $L_g^*v:= \nabla_g^2 v - (\Delta_g v) g - v \Ric_g$.
\begin{theorem}\label{th:dec}
Let $(M,g)$ be asymptotically flat. There exists a constant $C>0$ such that every symmetric $(0,2)$-tensor $h\in C^{2,\alpha}_{-q}(M)$ admits a unique decomposition $h = \hat h + k$ where $\hat h\in C^{2,\alpha}_{-q}(M)$ solves $R'_g(\hat h)=0, H'_g(\hat h)=0$ and $k\in C^{2,\alpha}_{-q}(M)$ solves $\Delta_g k = -L_g^* v$ in $M$ and $k|_{\Sigma} =0$ for some scalar function $v\in C^{2,\alpha}_{-q}(M)$ with the energy estimates
\begin{align}
		\| \nabla k \|_{L^2(M)}^2 &=\int_M v R'_g(h)\, d\mu_g -\int_\Sigma 2v H'_g (h)\, d\sigma_\gamma, \label{eq:energyk}\\
	\| \nabla v \|_{L^2(M)} &\le C \| \nabla k\|_{L^2(M)}. \label{eq:energyv}
\end{align}
The constant in \eqref{eq:energyv} may be chosen uniformly when the background metric $g$ ranges over a sufficiently small neighborhood
of a fixed metric in the $C^{2,\alpha}_{-q}(M)$ topology.
\end{theorem}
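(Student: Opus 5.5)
The plan is to build $k$ directly from $v$ and reduce the decomposition to a scalar variational problem for $v$. For a function $v$ with $\nabla v\in L^2(M)$, let $k=G_g v$ be the unique decaying solution of the Dirichlet problem
\[
\Delta_g k=-L_g^*v \text{ in } M,\qquad k|_\Sigma=0 .
\]
This exists because the rough Laplacian with Dirichlet data is an isomorphism on the relevant weighted spaces for $q\in(\frac{n-2}{2},n-2)$. Finding the decomposition is then equivalent to finding $v$ with
\[
R'_g(G_gv)=R'_g(h)\ \text{in } M,\qquad H'_g(G_gv)=H'_g(h)\ \text{on }\Sigma .
\]
Once $v$ is found, we set $\hat h:=h-G_gv$.

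The key identity comes from \eqref{eq:ibp2} applied to $k=G_gv$. Since $k^\intercal=0$ on $\Sigma$, the boundary term $\langle vA_g-\nu_g(v)\gamma,k^\intercal\rangle$ drops out, and we get
\[
\int_M vR'_g(k)\,d\mu_g-2\int_\Sigma vH'_g(k)\,d\sigma_\gamma=\int_M\langle k,L_g^*v\rangle\,d\mu_g=-\int_M\langle k,\Delta_gk\rangle\,d\mu_g=\|\nabla k\|_{L^2}^2 .
\]
The last equality uses $k|_\Sigma=0$ and the decay at infinity. Polarizing, the bilinear form
\[
B(v,w):=\int_M\langle\nabla G_gv,\nabla G_gw\rangle\,d\mu_g
\]
is exactly the weak form of $v\mapsto(R'_g(G_gv),-2H'_g(G_gv))$. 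The equation for $v$ therefore becomes
\[
B(v,w)=\int_M wR'_g(h)\,d\mu_g-2\int_\Sigma wH'_g(h)\,d\sigma_\gamma
\]
for all test functions $w$. The right-hand side is a bounded functional on the completion $\mathcal H$ of decaying functions under $\|\nabla\cdot\|_{L^2}$, by Lemma~\ref{le:P} and a trace inequality. Once coercivity $\|\nabla v\|_{L^2}\le C\|\nabla G_gv\|_{L^2}$, i.e. \eqref{eq:energyv}, is established, Lax--Milgram gives a unique weak solution $v\in\mathcal H$. Testing with $w=v$ then gives \eqref{eq:energyk}. The coupled system
\[
\Delta k=-L^*v,\qquad R'(k)=R'(h),\qquad H'(k)=H'(h),\qquad k|_\Sigma=0
\]
is, in $v$, a fourth-order elliptic boundary problem: its principal part is $(n-1)\Delta^2 v$ up to constants, and the boundary conditions come from the variational structure. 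Weighted Schauder theory then upgrades $v$ to $C^{2,\alpha}_{-q}$ and hence $k,\hat h$ to $C^{2,\alpha}_{-q}$. Uniqueness follows because if $h=\hat h_1+k_1=\hat h_2+k_2$, the difference $k=k_1-k_2$ comes from some $v$ with zero right-hand side, so $\|\nabla k\|^2=0$. Then $k=0$, so $L^*v=0$, and \eqref{eq:energyv} forces $v=0$.

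The main obstacle is the coercivity estimate \eqref{eq:energyv}, since $v$ has no boundary condition on $\Sigma$. I would argue by contradiction and compactness. Take $v_i$ with $\|\nabla v_i\|_{L^2}=1$ and $\|\nabla k_i\|_{L^2}\to0$. Then $L^*v_i=-\Delta k_i\to0$ in $H^{-1}$ against test tensors compactly supported in the interior. Pass to a weak limit $v$. It satisfies $L_g^*v=0$ and decays, and taking the trace and divergence gives $\Delta v=0$ and $\nabla^2v=v\Ric$. Asymptotic flatness then forces $v$ to approach a constant, hence $v\equiv0$; unique continuation handles the region near $\Sigma$.

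To get a contradiction I need strong convergence $\nabla v_i\to0$ in $L^2$. Here I would exploit the overdetermined structure of $L^*$. Its trace gives $(1-n)\Delta v-vR=\tr L^*v$, so
\[
\nabla^2 v=L^*v+\tfrac{1}{1-n}\bigl(\tr L^*v+vR\bigr)g+v\Ric .
\]
This controls the full Hessian of $v$ in $H^{-1}$ up to the lower-order terms $v\,\Rm$. The lower-order terms are compact thanks to the weighted Poincaré inequality, Lemma~\ref{le:P}, and the decay of $\Rm$. Near $\Sigma$, the Hessian of $v$ must be paired with test tensors that do not vanish on $\Sigma$. For such a test tensor $\phi$,
\[
\int\langle L^*v_i,\phi\rangle=\int\langle\nabla k_i,\nabla\phi\rangle-\int_\Sigma\langle\partial_\nu k_i,\phi\rangle .
\]
The boundary term is what needs control. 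I would first try to bound $\partial_\nu k_i$ in $H^{-1/2}(\Sigma)$ by $\|\nabla k_i\|_{L^2}$ together with $\|L^*v_i\|$, using the weak Neumann-trace characterization. This would yield a Nečas-type inequality $\|\nabla v\|_{L^2}\le C(\|\nabla^2v\|_{H^{-1}}+\text{compact})$ on a collar of $\Sigma$.

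Uniformity of $C$ over a $C^{2,\alpha}_{-q}$-neighborhood follows from the same contradiction argument, now allowing the background metrics $g_i\to g$, since every step is stable under such convergence.
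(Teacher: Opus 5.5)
Your reduction is sound as far as it goes. With $k=G_gv$ one indeed has
\[
\int_M wR'_g(G_gv)\,d\mu_g-2\int_\Sigma wH'_g(G_gv)\,d\sigma_\gamma=\int_M\langle\nabla G_gv,\nabla G_gw\rangle\,d\mu_g ,
\]
and the right-hand functional is bounded on $\mathcal H$ because $q>\frac{n-2}{2}$. Lax--Milgram would then be a reasonable substitute for the paper's route, which proves ellipticity and the complementing condition for the coupled system in Lemma~\ref{le:elliptic} and then computes the Fredholm kernel and cokernel in Lemma~\ref{le:iso}.

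However, existence and \eqref{eq:energyv} both rest on the coercivity estimate, and that estimate is not established. Your proposed mechanism starts from a false premise: you do not need test tensors that are nonzero on $\Sigma$. The Ne\v{c}as inequality $\|f\|_{L^2(U)}\le C(\|f\|_{H^{-1}(U)}+\|\nabla f\|_{H^{-1}(U)})$ on a bounded domain holds with $H^{-1}(U)=(H^1_0(U))^*$. It is dual to solving a divergence equation with zero boundary values (Bogovski\u{\i}). If you test $L_g^*v_i=-\Delta_gk_i$ only against tensors vanishing on $\Sigma$, no $\partial_\nu k_i$ term appears at all. The fix you suggest would in any case be circular. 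The weak Neumann trace bounds $\|\partial_\nu k\|_{H^{-1/2}(\Sigma)}$ by $\|\nabla k\|_{L^2}$ plus the norm of $\Delta_gk=-L_g^*v$ in $(H^1)^*$. But pairing $\nabla^2v$ with tensors that do not vanish on $\Sigma$ produces a boundary trace of $\nabla v$, which is exactly what $\|\nabla v\|_{L^2}$ does not control.

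The missing ingredient, and the way the paper proves Proposition~\ref{pr:en}, is a global test tensor. One constructs $T$ on all of $M$ with $\Div_gT=\nabla v$, $T|_\Sigma=0$ and $\|\nabla T\|_{L^2(M)}\le C\|\nabla v\|_{L^2(M)}$ (Lemmas~\ref{le:div}, \ref{le:div2} and Corollary~\ref{co:div2}). A Ne\v{c}as inequality on a collar of $\Sigma$ leaves the noncompact end open. The paper handles the end with flux tensors that remove the obstruction coming from parallel one-forms. With $\widehat T=T-\frac{1}{n-1}(\tr_gT)g$ one has $\langle T,\nabla^2v\rangle=\langle\widehat T,\nabla^2v-(\Delta_gv)g\rangle$, hence
\[
\|\nabla v\|_{L^2(M)}^2=-\int_M\langle\nabla\widehat T,\nabla k\rangle\,d\mu_g-\int_Mv\langle\widehat T,\Ric_g\rangle\,d\mu_g .
\]
There is no boundary term because $\widehat T|_\Sigma=0$. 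Only after this does a compactness argument like yours enter, to remove a term $\|v\|_{L^2(K)}$.

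A smaller point: since $G_g$ is nonlocal, the problem for $v$ is not a local fourth-order boundary problem. Regularity of your weak solution should come from the second-order coupled system in $(k,v)$ and its Lopatinski\u{\i}--Shapiro condition. You would still need to verify that condition, as in Lemma~\ref{le:elliptic}.
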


We defer the proof of Theorem~\ref{th:dec} to Section~\ref{se:de}. Assuming Theorem~\ref{th:dec}, we proceed to prove Theorem~\ref{Th:minimizer}.

The next lemma gives an alternative expression for the bilinear form $Q_{g,u}$ defined in \eqref{eq:Q}, obtained by integrating by parts to remove second derivatives of $k$. The key point is that, when $h^\intercal=0$, the resulting boundary integral involves only $h$, $k$, and their \emph{tangential} derivatives. The proof is a straightforward computation and is omitted.

\begin{lemma}\label{le:Q}
Let $(M,g)$ be asymptotically flat, and let $u$ satisfy $u-1\in C^2_{-q}(M)$. Then for any symmetric $(0,2)$-tensors $h, k\in C^2_{-q}(M)$,
\begin{align*}
Q_{g,u}(h,k)&=\int_M u\ \bigg[ \frac12\langle\nabla h,\nabla k\rangle-\langle\Div_g h,\Div_g k\rangle\\
&\hspace{50pt}+\frac12\left\langle\Div_g h,d(\tr k)\right\rangle+\frac12\left\langle d(\tr h),\Div_g k\right\rangle
\\
&\hspace{50pt}-\frac12\left\langle d(\tr h),d(\tr k)\right\rangle-\frac12\left\langle\Rm_g*k,h\right\rangle
\\
&\hspace{50pt} +\frac12(\tr h)\langle\Ric_g,k\rangle\bigg]\,d\mu_g\\
&\quad+\int_M u_\ell \Big[-k_{ij;i} h_{j\ell }+ h_{ij} k_{i\ell ;j}\\
&\hspace{50pt} +\frac12(h)_{ik}(\tr k)_i +(\tr h)_ik_{i\ell} +\frac12(\tr h) k _{i\ell ;i} \Big]\,d\mu_g
\\
&\quad+\int_M \left(\Delta_g u - \frac12 R_g u \right)\langle h,k\rangle\,d\mu_g \\
&\quad +  \int_M \left( 2\langle \mathcal E_u(g)\circ k , h \rangle_g - \frac{1}{2} (\tr_g k) \langle \mathcal E_u (g), h \rangle_g\right) \, d\mu_g
\\
&\quad +\int_\Sigma\Big[-u\big(\tr_{\gamma}h^\intercal\big)H'(k)+\frac12u\,(\nabla_\nu k)^\intercal\cdot h^\intercal
\\
&\hspace{50pt}+u\left(h_{00}-\frac12\tr h\right)A\cdot k^\intercal-\frac12uH h_{00}k_{00}\\
&\hspace{50pt}-u h _{0a} k_{ab;b}+\frac12u h_{0a}(\tr k)_a+\frac12u k_{0a}(\tr h)_a
\\
&\hspace{50pt} +(\tr h)\ k(\nabla u,\nu)+\frac12(\tr h)k_{0a}u_a \Big]\,d\sigma_\gamma
\end{align*}
where $a,b$ denote tangential indices along $\Sigma$ and $e_0 = \nu$.  

In particular, if $h^\intercal=0$, then we can extend the definition of $Q_{g,u}(h, k)$ to less regular $h, k\in W^{1,2}_{-q}(M)$. For later application, we write the formula as
\begin{align*}
	Q_{g,u}(h, k)&= \int_M\mathcal P_{g, u}(h, \nabla_g h, k, \nabla_g k) \, d\mu_g \\
	&\quad + \int_\Sigma \mathcal S_{g,u} (h, \nabla_{\gamma} h, k, \nabla_{\gamma} k )\, d\sigma_\gamma,
\end{align*}
where, schematically, $*$ denotes contractions with respect to $g$,
\begin{align*}
	\mathcal P_{g, u} (h, \nabla_g h, k, \nabla_g k) &= u \nabla_g h * \nabla_g k + \nabla u * h * \nabla_g k \\
	&\quad + \nabla u * k * \nabla_g h +(\nabla^2u + u\Rm_g)*h*k\\
	\mathcal S_{g,u} (h, \nabla_{\gamma} h, k, \nabla_{\gamma} k )&= u h * \nabla_{\gamma} k + u k * \nabla_{\gamma} h\\
	&\quad + u A * h * k + \nabla_g u * h * k.
\end{align*}

Consequently, if $h^{(j)}, h\in W^{1,2}_{-q}$ satisfy $ \| \nabla (h^{(j)}- h) \|_{L^2(M)}\to 0$ and $(h^{(j)})^\intercal=h^\intercal = 0$, then
\begin{align}\label{eq:Qcont}
	\lim_{j\to \infty} Q_{g,u}(h^{(j)}, h^{(j)}) = Q_{g, u}(h, h).
\end{align}
\end{lemma}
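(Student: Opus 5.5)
The plan is to start from the definition \eqref{eq:Q} of $Q_{g,u}(h,k)$. Its second line, the terms in $\mathcal E_u(g)$, contains no derivatives of $h$ or $k$ and is carried over unchanged. All the work is in the first term,
\[
-\int_M \langle D\mathcal E_u|_g(k),h\rangle_g\,d\mu_g,\qquad D\mathcal E_u|_g(k)=D(L^*_gu)(k)+\tfrac12 uR'_g(k)g+\tfrac12 uR_gk .
\]
We expand it using the linearization formulas already recorded: $\Ric'_g(k)=-\frac12\Delta_Lk+\frac12\mathcal L_{(\beta_gk)^\sharp}g$, the formula for $R'_g(k)$, the Hessian and Laplacian variations $(\nabla^2)'_g(k)u$ and $\Delta'_g(k)u$, and the variation $-\langle k,\nabla^2u\rangle$ coming from the trace. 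The result is a pointwise expression: second derivatives of $k$ multiplied by $u$, first derivatives of $k$ contracted with $\nabla u$, and zeroth-order terms in $\nabla^2u$, $u\Rm$, $\Delta u$, $R_g$. The last family assembles into $(\Delta_gu-\frac12R_gu)\langle h,k\rangle$, the $\Rm*k$ term, and $\frac12(\tr h)\langle\Ric,k\rangle$.

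The second-order terms are $\frac12u\Delta k$, $-\frac12u(\nabla\Div k+(\nabla\Div k)^T)$, $\frac12u\nabla^2\tr k$, and $u(\Div\Div k-\Delta\tr k)g$, each paired with $h$. We integrate each by parts once, moving one derivative onto $h$. When the derivative lands on $u$, it produces the listed $u_\ell$-terms; when it lands on $h$, it produces the quadratic gradient terms $\frac12\langle\nabla h,\nabla k\rangle-\langle\Div h,\Div k\rangle+\dots$. The boundary term at infinity vanishes because $h,k\in C^2_{-q}$ with $2q+1>n-1$ when $q>\frac{n-2}{2}$. The remaining boundary term is an integral over $\Sigma$ of expressions like $u\,h(\nu,\cdot)*\nabla k$ and $u\,(\tr h)\,\nabla_\nu\tr k$, with sign fixed by the convention that $\nu$ points toward infinity.

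The main obstacle is the boundary bookkeeping. On $\Sigma$ we split every tensor into $h_{00}$, $h_{0a}$ and $h^\intercal$ using the frame $e_0=\nu$, $e_a$, and we split the ambient divergences as $\nabla_0 k_{0b}+k_{ab;a}$ plus second fundamental form corrections. The normal derivatives of $k$ that appear — $\nabla_\nu\tr k$ and $(\nabla_\nu k)_{0a}$ — are regrouped using the formula for $H'_g(k)$, which contains $\frac12\nu(\tr_\gamma k^\intercal)$. This leaves only $-u(\tr_\gamma h^\intercal)H'(k)$ and $\frac12 u(\nabla_\nu k)^\intercal\cdot h^\intercal$ among the genuinely normal-derivative terms. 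Everything else becomes tangential derivatives, e.g. $-uh_{0a}k_{ab;b}$, or $A$-terms such as $u(h_{00}-\frac12\tr h)A\cdot k^\intercal$ and $-\frac12uHh_{00}k_{00}$, or $\nabla u$-terms. This is a lengthy but routine computation. To avoid sign slips we check it against the symmetric case $u\equiv1$ at the Euclidean metric, where the formula must reduce to $\frac12\int|\nabla\tau|^2$ as in the proof of Lemma~\ref{le:stable}.

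For the ``in particular'' statements: if $h^\intercal=0$, both boundary terms with normal derivatives are multiplied by $h^\intercal$ and therefore disappear. The formula then involves only $h,\nabla h,k,\nabla k$ in $M$ and $h,k,\nabla_\gamma h,\nabla_\gamma k$ on $\Sigma$, which is the schematic $\mathcal P_{g,u}+\mathcal S_{g,u}$ form. It therefore makes sense for $h,k\in W^{1,2}_{-q}$. To see this:
\begin{itemize}
\item The interior terms are bounded by $\|\nabla h\|_{L^2}\|\nabla k\|_{L^2}$. We use $|\nabla u|\le Cr^{-1}$, $|\nabla^2u|+|\Rm|\le Cr^{-2}$ together with Lemma~\ref{le:P}.
\item The boundary terms are controlled by the trace inequality $H^{1}\to L^2(\Sigma)$.
\item The tangential-derivative pairings $h*\nabla_\gamma k$ are handled via $H^{1/2}(\Sigma)\times H^{-1/2}(\Sigma)$ duality, since $\nabla_\gamma$ maps $H^{1/2}\to H^{-1/2}$ continuously.
\end{itemize}
Hence $Q_{g,u}$ is a bounded bilinear form in the energy norm on $\{h^\intercal=0\}$, and \eqref{eq:Qcont} follows from bilinearity by writing $Q(h^{(j)},h^{(j)})-Q(h,h)=Q(h^{(j)}-h,h^{(j)})+Q(h,h^{(j)}-h)$.
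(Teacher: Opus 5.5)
Your proposal is correct and follows the route the paper indicates. The paper omits the computation, describing it only as an integration by parts that removes the second derivatives of $k$, and your extension and continuity argument via the trace inequality and $H^{1/2}(\Sigma)$--$H^{-1/2}(\Sigma)$ duality is the one used in Corollary~\ref{co:stable}.

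There are a few bookkeeping corrections, none of which affects the schematic form $\mathcal P_{g,u}+\mathcal S_{g,u}$ or the continuity statement \eqref{eq:Qcont}:
\begin{enumerate}
\item The $g$-term in the principal part of $D\mathcal E_u|_g(k)$ is $\frac12u(\Div_g\Div_g k-\Delta_g\tr k)g$, not $u(\Div_g\Div_g k-\Delta_g\tr k)g$. It comes only from $\frac12uR'_g(k)g$.
\item The zeroth-order terms do not all assemble into the three terms you list. To reach the exact displayed formula, you must also integrate $-(\tr h)\langle k,\nabla_g^2u\rangle$ by parts in $M$. This produces $(\tr h)_ik_{i\ell}u_\ell$, changes the coefficient of $(\tr h)k_{i\ell;i}u_\ell$ from $-\frac12$ to $+\frac12$, and yields the boundary term $(\tr h)k(\nabla u,\nu)$.
\item After the $H'_g(k)$ substitution, a term $-\frac12u(\tr h)\Div_\gamma\omega_k$ remains, where $\omega_k=k(\nu,\cdot)^\intercal$. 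Integrating it by parts along $\Sigma$ gives the two terms $\frac12uk_{0a}(\tr h)_a+\frac12(\tr h)k_{0a}u_a$.
\item The normal-derivative terms paired with $h_{0a}$ and $h_{00}$ cancel outright rather than being regrouped. Only $\gamma^{ab}(\nabla_\nu k)_{ab}$, paired with $-\frac12u\tr_\gamma h^\intercal$, is rewritten using $H'_g(k)$.
\end{enumerate}
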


\begin{corollary}\label{co:stable}
Let $(M,\bar g)$ be an asymptotically flat manifold with compact boundary $\Sigma$, and let $\bar u-1\in C^{2,\alpha}_{-q}(M)$. There exists a constant $C>0$ such that, for all $(g,u)$ sufficiently close to $(\bar g,\bar u)$ in the $C^{2,\alpha}_{-q}$-topology and for all symmetric $(0,2)$-tensors $h, k\in W^{1,2}_{-q}(M)$ with $h^\intercal=k^\intercal=0$, we have the following estimates
\begin{align*}
		\left| Q_{\bar g, \bar u} (h, k) \right| &\le C \| \nabla h\|_{L^2(M)}\| \nabla k\|_{L^2(M)} \\
		\left| Q_{g, u} (h, h) - Q_{\bar g, \bar u} (h, h) \right| &\le C\left(\|g-\bar g\|_{C^2_{-q}(M)}+\|u-\bar u\|_{C^2_{-q}(M)}\right)\|\nabla h\|^2_{L^2(M,\bar g)}.
\end{align*}
\end{corollary}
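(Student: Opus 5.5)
The plan is to apply the schematic representation of Lemma~\ref{le:Q} directly: write
\[
Q_{g,u}(h,k)=\int_M\mathcal P_{g,u}\,d\mu_g+\int_\Sigma\mathcal S_{g,u}\,d\sigma_\gamma ,
\]
and bound the interior and boundary pieces separately. The first estimate uses only the background $(\bar g,\bar u)$. The second follows because every coefficient in $\mathcal P$ and $\mathcal S$ depends smoothly, and pointwise Lipschitz, on $(g,\partial g,\partial^2 g,u,\partial u,\partial^2 u)$.

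\textbf{Interior terms.} Since $\bar u-1\in C^{2,\alpha}_{-q}$, we have $|\bar u|\le C$, $|\nabla\bar u|\le Cr^{-q-1}$, and $|\nabla^2\bar u|+|\mathrm{Rm}_{\bar g}|\le Cr^{-q-2}$.
\begin{itemize}
\item The term $u\,\nabla h*\nabla k$ is bounded by $C\|\nabla h\|_{L^2}\|\nabla k\|_{L^2}$.
\item For $\nabla u*h*\nabla k$, write $|\nabla u|\,|h|\le Cr^{-q}\,r^{-1}|h|\le Cr^{-1}|h|$. By Cauchy--Schwarz and the weighted Poincaré inequality (Lemma~\ref{le:P}), this is bounded by $C\|r^{-1}h\|_{L^2}\|\nabla k\|_{L^2}\le C\|\nabla h\|_{L^2}\|\nabla k\|_{L^2}$.
\item The term $(\nabla^2u+u\mathrm{Rm})*h*k$ is controlled by $C\|r^{-1}h\|_{L^2}\|r^{-1}k\|_{L^2}$, since $r^{-q-2}\le r^{-2}$.
\end{itemize}
Lemma~\ref{le:P} applies to tensors componentwise in the asymptotic chart, with $\nabla$ versus $\partial$ differing by Christoffel terms of order $r^{-q-1}$. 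Those are again absorbed by the same Poincaré bound.

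\textbf{Boundary terms.} This is the main obstacle. $\mathcal S$ contains $h*\nabla_\gamma k$ and $k*\nabla_\gamma h$, which involve tangential derivatives on $\Sigma$, and $W^{1,2}$ traces do not control these. Two approaches are available.
\begin{itemize}
\item \emph{Integration by parts on $\Sigma$ (try first).} Since $h^\intercal=k^\intercal=0$, the only nonzero components on $\Sigma$ are $h_{0a},h_{00}$ (and similarly for $k$). Each term $u\,h_{0a}\nabla_a k_{\cdot\cdot}$ can be integrated by parts along the closed surface $\Sigma$ to move the tangential derivative, but this still leaves a product of a trace with a tangential derivative.
\item \emph{Convert to an interior integral (preferred).} Extend $\nu$ and the coefficients smoothly to a collar via a cutoff $\chi$. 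Then write each boundary integral $\int_\Sigma f\,h*\nabla_\gamma k$ as $\int_M\Div(\chi f\,\nu\, h*\nabla^{T}k)$. The resulting second derivatives of $k$ are tangential-normal mixed, so one more integration by parts within the interior (tangential divergence along level sets of the distance to $\Sigma$, whose boundary terms vanish on closed leaves) moves that derivative onto $h$. This yields interior terms of the form $\nabla h*\nabla k$ and $h*\nabla k$ on a compact collar, all bounded by $C(\|\nabla h\|_{L^2}+\|r^{-1}h\|_{L^2})(\|\nabla k\|_{L^2}+\|r^{-1}k\|_{L^2})$.
\end{itemize}
The remaining boundary terms $A*h*k$ and $\nabla u*h*k$ are controlled by the trace inequality $\|h\|_{L^2(\Sigma)}\le C\|h\|_{W^{1,2}(\text{collar})}$ together with Lemma~\ref{le:P}. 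This gives the first estimate.

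\textbf{Perturbation estimate.} By bilinearity,
\[
Q_{g,u}(h,h)-Q_{\bar g,\bar u}(h,h)
\]
is the same schematic expression with each coefficient replaced by its difference. This includes the differences of the metric contractions, of $d\mu_g$ versus $d\mu_{\bar g}$, of $\nu_g,A_g,\gamma$, and of the $\mathcal E_u(g)$-terms in \eqref{eq:Q}. Each such difference is bounded pointwise by $C(\|g-\bar g\|_{C^2_{-q}}+\|u-\bar u\|_{C^2_{-q}})$ times the same weights $1$, $r^{-q-1}$, $r^{-q-2}$ used above. Note that $\mathcal E_u(g)$ involves second derivatives of $g$ and $u$, which are controlled by the $C^2_{-q}$ norms.

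The same estimates then give the bound in terms of $\|\nabla h\|^2_{L^2(M,\bar g)}$, using the uniform equivalence of the $g$- and $\bar g$-norms. The continuity of the collar construction in $g$ is harmless, because the collar and cutoff can be fixed once using $\bar g$.
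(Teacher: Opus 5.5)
Your proposal is correct. The interior terms and the perturbation estimate are handled essentially as in the paper; the difference is in the boundary terms. The paper does not move any derivatives off $\Sigma$. It uses the trace theorem together with the weighted Poincar\'e inequality, $\|h\|_{H^{1/2}(\Sigma)}\le C\|h\|_{H^1(K)}\le C\|\nabla h\|_{L^2(M)}$. It then uses the fact that tangential differentiation maps $H^{1/2}(\Sigma)$ boundedly into $H^{-1/2}(\Sigma)$, so $\bigl|\int_\Sigma\langle h,\nabla_{\bar\gamma}k\rangle\,d\sigma_{\bar\gamma}\bigr|\le C\|h\|_{H^{1/2}(\Sigma)}\|k\|_{H^{1/2}(\Sigma)}$.

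Your remark that $W^{1,2}$ traces ``do not control'' these terms is therefore too pessimistic. The product of a trace with a tangential derivative of a trace, which is where your first bullet stalls, is exactly an $H^{1/2}$--$H^{-1/2}$ pairing, and it closes immediately.

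Your preferred collar argument is nonetheless a valid, more elementary substitute. You apply the divergence theorem in the normal direction, then integrate by parts ambiently in a direction tangent to the level sets of the distance to $\Sigma$. The boundary contribution on $\Sigma$ vanishes because that direction is tangent to $\Sigma$. In effect, this is a hands-on proof of the same duality bound.

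What your route buys is that it avoids fractional spaces. It also makes the Lipschitz dependence on $(g,u)$ explicit, since derivatives land only on coefficients such as $u$ and the extended $\nu_g$. Their $C^1$ differences are controlled by $\|g-\bar g\|_{C^2_{-q}}+\|u-\bar u\|_{C^2_{-q}}$.

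The cost is that your intermediate identity involves $\nabla_\nu\nabla^{T}k$. You must therefore prove it for smooth (or $C^2_{-q}$) tensors and extend by density, which you should state explicitly. This is consistent with how Lemma~\ref{le:Q} extends $Q$ to $W^{1,2}_{-q}$ tensors with vanishing tangential part.
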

\begin{proof}
Both estimates follow from Lemma~\ref{le:Q}, the weighted Poincar\'e inequality (Lemma~\ref{le:P}), and the trace inequality used to control the boundary terms. It is essential here that the boundary terms are of the form $\mathcal S_{g,u} (h, \nabla_{\gamma} h, k, \nabla_{\gamma} k )$, involving no normal derivatives of $h$ or $k$.

 For example, the trace theorem implies that, for a compact collar $K$ of $\Sigma$,
\[
	\|h\|_{H^{1/2}(\Sigma)}	\le C\|h\|_{H^1(K)}.
\]
By the weighted Poincar\'e inequality,
\[
	\|h\|_{H^1(K)}	\le C\|\nabla h\|_{L^2(M)}.
\]
Therefore, using the duality between $H^{1/2}(\Sigma)$ and $H^{-1/2}(\Sigma)$, we obtain
\begin{align}\label{eq:bdry}
\begin{split}
	\left|\int_\Sigma \langle h,\nabla_{\bar\gamma} k \rangle	\,d\sigma_{\bar\gamma}\right|
	&\le C\|h\|_{H^{1/2}(\Sigma)}	\|\nabla_{\bar\gamma}k \|_{H^{-1/2}(\Sigma)}\\
	&\le C\|h\|_{H^{1/2}(\Sigma)}	\| k \|_{H^{1/2}(\Sigma)}\\
	&\le C\|\nabla h\|_{L^2(M)}	\|\nabla k\|_{L^2(M)}.
\end{split}
\end{align}
\end{proof}

We prove Theorem~\ref{Th:minimizer} in two steps. We first show that  strict stability persists under small static vacuum perturbations, and then use this uniform coercivity to obtain a quadratic lower bound for the ADM mass.

\begin{proposition}\label{pr:str}
Let $(M,\bar g,\bar u)$ be an asymptotically flat static vacuum triple, and suppose that $\bar g$ is a strictly stable critical point of $\mathcal F_{\bar u}$ on $\mathscr C_{\bar g}(M)$. Let $P_{\bar g}$ be an elliptic gauge projection operator. Then there exist a constant $\lambda_0>0$ and a neighborhood $\mathcal V\subset \mathcal M^{2,\alpha}_{-q}(M)\times\bigl(1+C^{2,\alpha}_{-q}(M)\bigr)$ of $(\bar g,\bar u)$ such that every static vacuum pair $(g,u)\in\mathcal V$ satisfies
\[
\left.D^2\mathcal F_u\right|_g(h,h) \ge \lambda_0\|\nabla (P_gh)\|_{L^2(M)}^2 \qquad\text{for every }h\in T_g\mathscr C_g(M).
\]
\end{proposition}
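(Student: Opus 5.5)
Given $h\in T_g\mathscr C_g(M)$ with $(g,u)$ a static vacuum pair near $(\bar g,\bar u)$, the plan is to transfer $h$ to a tangent vector of $\mathscr C_{\bar g}(M)$ at the fixed background using Theorem~\ref{th:dec}, apply strict stability there, and control all errors by $\|g-\bar g\|_{C^2_{-q}}+\|u-\bar u\|_{C^2_{-q}}$ times the energy. Throughout I use the gauge-fixed version of strict stability from Lemma~\ref{le:equivalent}(2).

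First I reduce to gauge-fixed representatives. By Definition~\ref{de:gauge}(a),(b), $P_gh=h+\mathcal L_Xg$ with $X\in\mathcal X(M)$. Since $X|_\Sigma=0$ and $\mathscr C_g(M)$ is $\mathscr D(M)$-invariant, $\mathcal L_Xg\in T_g\mathscr C_g(M)$ (in particular $(\mathcal L_Xg)^\intercal=0$). The second variation at a critical point vanishes along orbit directions, so $D^2\mathcal F_u|_g(h,h)=D^2\mathcal F_u|_g(P_gh,P_gh)$. I may therefore assume $h=P_gh$. Because $h^\intercal=0$ and $(g,u)$ is static, Lemma~\ref{le:sec} and \eqref{eq:Q} give $D^2\mathcal F_u|_g(h,h)=Q_{g,u}(h,h)$.

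Next I decompose $h$ with respect to the fixed background $\bar g$ using Theorem~\ref{th:dec}: $h=\hat h+k$, where $R'_{\bar g}(\hat h)=0$, $H'_{\bar g}(\hat h)=0$, $k|_\Sigma=0$, so $\hat h^\intercal=0$ and $\hat h\in T_{\bar g}\mathscr C_{\bar g}(M)$. Since $R'_g(h)=0$ and $H'_g(h)=0$, \eqref{eq:energyk} gives
\[
\|\nabla k\|^2_{L^2}=\int_M v\,(R'_{\bar g}-R'_g)(h)\,d\mu_{\bar g}-2\int_\Sigma v\,(H'_{\bar g}-H'_g)(h)\,d\sigma_{\bar\gamma}.
\]
The difference operators have coefficients of size $\delta:=\|g-\bar g\|_{C^{2}_{-q}}$ with the appropriate decay. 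I integrate the second-order terms by parts onto $v$ and handle the boundary terms as in \eqref{eq:bdry}; the boundary terms involve only tangential derivatives or normal first derivatives that cancel against the interior integration by parts, as in \eqref{eq:ibp1}. Combined with Lemma~\ref{le:P}, the trace inequality, and \eqref{eq:energyv}, this should give $\|\nabla k\|^2\le C\delta\|\nabla v\|\|\nabla h\|\le C\delta\|\nabla k\|\|\nabla h\|$, hence
\[
\|\nabla k\|_{L^2}\le C\delta\|\nabla h\|_{L^2}.
\]
This is the step I expect to be the main obstacle: the boundary integrals of the difference $H'_{\bar g}-H'_g$ contain $\nu(\tr h^\intercal)$-type and $h(\nu,\nu)$ terms. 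I plan to use $h^\intercal=0$ and the form \eqref{eq:ibp1} to ensure no uncontrolled normal derivatives of $h$ appear.

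Now I apply stability and perturb. By Lemma~\ref{le:equivalent}, $Q_{\bar g,\bar u}(\hat h,\hat h)=D^2\mathcal F_{\bar u}|_{\bar g}(\hat h,\hat h)\ge\lambda\|\nabla P_{\bar g}\hat h\|^2$. Using \eqref{eq:cont} and \eqref{eq:elliptic},
\[
\|\nabla P_{\bar g}\hat h\|\ge\|\nabla P_g h\|-C\delta\|\nabla h\|-C\|\nabla k\|\ge(1-C\delta)\|\nabla h\|,
\]
since $P_gh=h$. Corollary~\ref{co:stable} gives $|Q_{g,u}(h,h)-Q_{\bar g,\bar u}(h,h)|\le C(\delta+\|u-\bar u\|)\|\nabla h\|^2$. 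Applied to $h=\hat h+k$ with $\hat h^\intercal=k^\intercal=0$, together with bilinearity and the first estimate of the corollary, it also gives $|Q_{\bar g,\bar u}(h,h)-Q_{\bar g,\bar u}(\hat h,\hat h)|\le C\|\nabla k\|(\|\nabla h\|+\|\nabla k\|)\le C\delta\|\nabla h\|^2$. Combining these,
\[
D^2\mathcal F_u|_g(h,h)\ge(\lambda-C\delta-C\|u-\bar u\|)\|\nabla h\|^2\ge\tfrac{\lambda}{2}\|\nabla P_gh\|^2
\]
for $\mathcal V$ small, giving $\lambda_0=\lambda/2$. A density remark via \eqref{eq:Qcont} handles the regularity needed for $h\in C^{2,\alpha}_{-q}$.
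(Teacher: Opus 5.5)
Your proposal is correct and follows the paper's proof essentially step for step. You reduce to $P_gh=h$, decompose $h=(h-k)+k$ via Theorem~\ref{th:dec} at $\bar g$, and bound $\|\nabla k\|_{L^2}\le C\delta\|\nabla h\|_{L^2}$ by subtracting the first-order form \eqref{eq:ibp1} at $g$ (with $d\mu_g$, $d\sigma_\gamma$, which you may insert freely since $R'_g(h)$ and $H'_g(h)$ vanish identically) from the form at $\bar g$; this is exactly how the paper removes the normal-derivative boundary terms you were worried about, leaving only $\Div_\gamma\omega$-type terms controlled by \eqref{eq:bdry}, and you then perturb $Q$ and the projection via Corollary~\ref{co:stable}, \eqref{eq:cont}, and \eqref{eq:elliptic} just as the paper does.
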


\begin{proof}
The main point is to compare the constraint tangent spaces at $g$ and $\bar g$. We use Theorem~\ref{th:dec} to correct a tangent variation at $g$ into one at $\bar g$, with a small error in the energy norm.

Set
\[
\delta:=\|g-\bar g\|_{C^{2,\alpha}_{-q}(M)}+\|u-\bar u\|_{C^{2,\alpha}_{-q}(M)}.
\]
 For $\delta$ sufficiently small, the $L^2(M, g)$-norms are uniformly equivalent. We therefore suppress the dependence on $g$. By diffeomorphism invariance, it suffices to consider $h\in T_g\mathscr C_g(M)$ satisfying $P_gh=h$.
 
 Apply Theorem~\ref{th:dec} to obtain $(k,v)$ such that $h-k\in T_{\bar g}\mathscr C_{\bar g}(M)$. Since
$R'_g(h)=0, H'_g(h)=0$, comparing the first-order expressions in \eqref{eq:ibp1} at $\bar g$ and $g$, and using  \eqref{eq:energyk}, \eqref{eq:bdry},
gives
\begin{align*}
\|\nabla k\|_{L^2(M)}^2&= \int_M vR'_{\bar g}(h)\,d\mu_{\bar g} -2\int_\Sigma vH'_{\bar g}(h)\,d\sigma_{\bar\gamma}\\
&= \int_M\Big[ \langle \nabla_{\bar g} v , \nabla_{\bar g} \tr_{\bar g} h \rangle_{\bar g} - \Div_{\bar g} h (\nabla_{\bar g} v) - v \langle \Ric_{\bar g} , h \rangle_{\bar g} \Big] \, d\mu_{\bar g}\\
&\quad + \int_\Sigma v \Div_{\bar \gamma} (h(\nu_{\bar g}, \cdot)^\intercal) \, d\sigma_{\bar \gamma}\\
&\le C\delta\|\nabla v\|_{L^2(M)} \|\nabla h\|_{L^2(M)}.
\end{align*}
Together with \eqref{eq:energyv}, this yields
\begin{equation}\label{eq:nearby-k}
\|\nabla k\|_{L^2(M)} \le C\delta\|\nabla h\|_{L^2(M)}.
\end{equation}

By Corollary~\ref{co:stable} and \eqref{eq:nearby-k},
\[
\bigl|Q_{g,u}(h,h)-Q_{\bar g,\bar u}(h-k,h-k)\bigr| \le C\delta\|\nabla h\|_{L^2(M)}^2.
\]
By \eqref{eq:cont}, \eqref{eq:nearby-k}, and $P_gh=h$,
\begin{align*}
&\|\nabla(P_{\bar g}(h-k)-h)\|_{L^2(M)} \\
&\le \| \nabla (P_{\bar g} - P_g) h \|_{L^2(M)}  + C \| \nabla k \|_{L^2(M)} \le C\delta\|\nabla h\|_{L^2(M)}.
\end{align*}
Corollary~\ref{co:stable} and strict stability at $\bar g$ therefore give
\begin{align*}
\left.D^2\mathcal F_u\right|_g(h,h) &\ge Q_{\bar g,\bar u}(h-k,h-k) -C\delta\|\nabla h\|_{L^2(M)}^2\\
&\ge \lambda\|\nabla P_{\bar g}(h-k)\|_{L^2(M)}^2 -C\delta\|\nabla h\|_{L^2(M)}^2\\
&\ge (\lambda-C\delta)\|\nabla h\|_{L^2(M)}^2.
\end{align*}
Shrinking $\mathcal V$ gives the assertion.

\end{proof}

\begin{proposition}\label{pr:minimizer}
Let $(M,\bar g,\bar u)$ be as in Proposition~\ref{pr:str}. Then there exist a neighborhood $\mathcal V\subset \mathcal M^{2,\alpha}_{-q}(M)\times\bigl(1+C^{2,\alpha}_{-q}(M)\bigr)$ of $(\bar g,\bar u)$ and constants $\rho,\epsilon>0$ with the following property. For every static vacuum pair $(g,u)\in\mathcal V$, let $\mathcal S_g$ be the local slice associated with $P_g$, and set
\[
\mathcal U_g(\rho):= \left\{[g_1]\in\mathscr C_g(M)/\mathscr D(M): \begin{array}{l} \text{the slice representative }\hat g_1\in\mathcal S_g\text{ satisfies}\\
\|\hat g_1-g\|_{C^{2,\alpha}_{-q}(M)}<\rho \end{array} \right\}.
\]
Then
\[
m_{\ADM}(g_1)-m_{\ADM}(g)\ge \epsilon\,d_g([g_1],[g])^2
\]
for every $[g_1]\in\mathcal U_g(\rho)$, where
\[
d_g([g_1],[g]):= \inf_{\widetilde g_1\in[g_1]} \|\nabla (\widetilde g_1-g)\|_{L^2(M,g)}.
\]
Equality holds only if $[g_1]=[g]$.
\end{proposition}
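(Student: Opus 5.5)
We reduce the claim to a second-order Taylor expansion of $\mathcal F_u$ along the slice and control it with Proposition~\ref{pr:str}. Fix a static vacuum pair $(g,u)\in\mathcal V$ and $[g_1]\in\mathcal U_g(\rho)$. Let $\hat g_1\in\mathcal S_g$ be the slice representative and set $h:=\hat g_1-g$, so that $P_gh=h$ and $\|h\|_{C^{2,\alpha}_{-q}}<\rho$. Since $\hat g_1\in\mathscr C_g(M)$ and $g\in\mathscr C_g(M)$, both are scalar flat with the same Bartnik data. Hence
\[
2(n-1)\omega_{n-1}\bigl(m_{\ADM}(g_1)-m_{\ADM}(g)\bigr)=\mathcal F_u(\hat g_1)-\mathcal F_u(g),
\]
using diffeomorphism invariance of the mass. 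Consider the path $g_t=g+th$, which keeps $g_t^\intercal=g^\intercal$. Taylor's formula gives
\[
\mathcal F_u(\hat g_1)-\mathcal F_u(g)=D\mathcal F_u|_g(h)+\int_0^1(1-t)\,D^2\mathcal F_u|_{g_t}(h,h)\,dt.
\]
The first-order term vanishes by the first variation formula, since $\mathcal E_u(g)=0$ and $h^\intercal=0$. Because $h^\intercal=0$, we have $D^2\mathcal F_u|_{g_t}(h,h)=Q_{g_t,u}(h,h)$. Corollary~\ref{co:stable} then gives $|Q_{g_t,u}(h,h)-Q_{g,u}(h,h)|\le C\rho\|\nabla h\|^2_{L^2}$, where Corollary~\ref{co:stable} is applied with base point $g$ (or $\bar g$, using the triangle inequality with uniform constants). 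Therefore
\[
\mathcal F_u(\hat g_1)-\mathcal F_u(g)\ge \tfrac12 Q_{g,u}(h,h)-C\rho\|\nabla h\|_{L^2}^2.
\]

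The main obstacle is that $h$ is not in $T_g\mathscr C_g(M)$, so Proposition~\ref{pr:str} does not apply to $Q_{g,u}(h,h)$ directly. We repair this as in the proof of Proposition~\ref{pr:str}: apply Theorem~\ref{th:dec} at the background $g$ to write $h=\hat h+k$ with $\hat h\in T_g\mathscr C_g(M)$. The estimate on $k$ comes from nonlinearity. Since $R_{g+h}=0$ and $R_g=0$, we have $R'_g(h)=-\int_0^1(1-t)R''_{g_t}(h,h)\,dt$, which is quadratic in $h$. Likewise $H'_g(h)$ is quadratic, since $H_{g+h}=H_g$ and $h^\intercal=0$. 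Insert these into \eqref{eq:energyk} and integrate by parts to move one derivative onto $v$, leaving products of the form $h*\nabla h$ and $h*h$ with $C^{2,\alpha}$-small coefficients, together with the boundary term handled by \eqref{eq:bdry}. The weighted Poincaré inequality, $\|h\|_{C^{1}_{-q}}<\rho$, and \eqref{eq:energyv} then give $\|\nabla k\|_{L^2}^2\le C\rho\|\nabla v\|_{L^2}\|\nabla h\|_{L^2}$, and hence $\|\nabla k\|_{L^2}\le C\rho\|\nabla h\|_{L^2}$. Once we know $\|\nabla k\|_{L^2}\le C\rho\|\nabla h\|_{L^2}$, the following holds. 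By Corollary~\ref{co:stable} (bilinearity), $Q_{g,u}(h,h)\ge Q_{g,u}(\hat h,\hat h)-C\rho\|\nabla h\|^2$. By Proposition~\ref{pr:str}, $Q_{g,u}(\hat h,\hat h)\ge\lambda_0\|\nabla P_g\hat h\|^2$. Finally, $\|\nabla(P_g\hat h-h)\|=\|\nabla P_gk\|\le C\rho\|\nabla h\|$, using $P_gh=h$ together with \eqref{eq:elliptic}.

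Combining these estimates gives
\[
\mathcal F_u(\hat g_1)-\mathcal F_u(g)\ge(\tfrac{\lambda_0}{2}-C\rho)\|\nabla h\|^2_{L^2}\ge \tfrac{\lambda_0}{4}\|\nabla h\|_{L^2}^2
\]
for $\rho$ small. Here $C$ is uniform over $\mathcal V$, because Theorem~\ref{th:dec}, Definition~\ref{de:gauge}, and Corollary~\ref{co:stable} have uniform constants. Since $\hat g_1\in[g_1]$, we have $\|\nabla h\|_{L^2}\ge d_g([g_1],[g])$, which yields the inequality with $\epsilon=\lambda_0/(8(n-1)\omega_{n-1})$. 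For the equality case, suppose the masses agree. Then the chain of inequalities forces $\|\nabla h\|_{L^2}=0$, and the decay of $h$ gives $h=0$. So $\hat g_1=g$ and $[g_1]=[g]$.
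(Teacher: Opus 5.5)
Your proposal is correct and follows essentially the same route as the paper. Both arguments use the Taylor expansion of $\mathcal F_u$ along $g+th$ via $Q_{g_t,u}$, then the decomposition $h=(h-k)+k$ from Theorem~\ref{th:dec} at $g$ with the quadratic bound $\|\nabla k\|_{L^2}\le C\rho\|\nabla h\|_{L^2}$ (coming from $R_{\hat g_1}=R_g$ and $H_{\hat g_1}=H_g$), and finally Proposition~\ref{pr:str} together with $P_gh=h$ and \eqref{eq:elliptic}. The differences are cosmetic: you compare $Q_{g_t,u}$ with $Q_{g,u}$ in two steps and optionally integrate by parts in the $k$-estimate, whereas the paper bounds the $\nabla^2 h$ factor pointwise using $C^{2,\alpha}_{-q}$-smallness.
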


\begin{proof}
By Proposition~\ref{pr:str}, after shrinking $\mathcal V$ there is a constant $\lambda_0>0$, independent of $(g,u)\in\mathcal V$, such that
\begin{equation}\label{eq:uni}
\left.D^2\mathcal F_u\right|_g(\xi,\xi) \ge\lambda_0\|\nabla P_g\xi\|_{L^2(M,g)}^2 \qquad\text{for every }\xi\in T_g\mathscr C_g(M).
\end{equation}
By shrinking $\mathcal V$ further, the constants in Theorem~\ref{th:dec}, Corollary~\ref{co:stable}, the weighted Poincar\'e and trace inequalities, and the energy estimates for the gauge projections may be chosen uniformly.  

Let $[g_1]\in\mathcal U_g(\rho)$, let $\hat g_1$ be its slice representative, and set $h:=\hat g_1-g$. Then $P_gh=h$ and $h^\intercal=0$. Consider the straight-line path $g_s:=g+sh$, $0\le s\le1$, and set
\[
\delta:=\|h\|_{C^{2,\alpha}_{-q}(M)}.
\]
Although $h$ need not lie in $T_g\mathscr C_g(M)$, the endpoint constraints imply that its failure to satisfy the linearized constraints is quadratic:  By Taylor's formula,
\begin{align} \label{eq:taylor}
\begin{split}
	0&= R_{\hat g_1} - R_{g} = R'_{g}(h) +  \int_0^1 (1-s) D^2 R|_{g_{s}} (h, h) \, ds, \\
	0&= H_{\hat g_1} - H_{g} = H'_{g}(h) + \int_0^1 (1-s) D^2 H|_{g_{s}} (h, h) \, ds.
\end{split}
\end{align}

Apply Theorem~\ref{th:dec} at $g$ to obtain $(k,v)$ such that
$h-k\in T_g\mathscr C_g(M)$. From \eqref{eq:energyk} and \eqref{eq:taylor}, we have 
\begin{align}\label{eq:k-error}
\begin{split}
\| \nabla k \|_{L^2(M)}^2 &= \int_M v R'_{g}(h)\, d\mu_{g}-\int_\Sigma 2v H'_{g} (h)\, d\sigma_{\gamma}\\
&= -\int_0^1 (1-s)  \int_M v D^2  R|_{g_s} (h, h)  \,d\mu_{g}\, ds\\
&\quad  + \int_0^1 (1-s) \int_\Sigma 2vD^2 H|_{g_s}(h, h) \,  d\sigma_{\gamma} \, ds.
\end{split}
\end{align}
Explicitly, the integrands satisfy the quadratic bounds:
\begin{align*}
	&|D^2 R|_{g_s} (h, h) |\le C ( |h ||\nabla^2 h|  + |\nabla h|^2  + r^{-2-q} |h|^2  ), \\
	&|D^2 H|_{g_s} (h, h)| \le C (|h| |\nabla h|  + |h|^2 ).
\end{align*}
Using the $C^{2,\alpha}_{-q}$-smallness of $h$ to bound one factor in each quadratic term, and then applying the weighted Poincar\'e and trace
inequalities give, uniformly in $s$, we have 
\[
\begin{split}
&\int_M |v|\, \bigl|\left.D^2R\right|_{g_s}(h,h)\bigr|\,d\mu_g +\int_\Sigma |v|\, \bigl|\left.D^2H\right|_{g_s}(h,h)\bigr|\,d\sigma_\gamma\\
&\hspace{25mm}\le C\delta\|\nabla v\|_{L^2(M)} \|\nabla h\|_{L^2(M)}.
\end{split}
\]
Thus \eqref{eq:k-error} and \eqref{eq:energyv} imply
\[
\|\nabla k\|_{L^2(M)}^2 \le C\delta\|\nabla v\|_{L^2(M)} \|\nabla h\|_{L^2(M)} \le C\delta\|\nabla k\|_{L^2(M)} \|\nabla h\|_{L^2(M)},
\]
and hence
\begin{equation}\label{eq:uniform-k}
\|\nabla k\|_{L^2(M)} \le C\delta\|\nabla h\|_{L^2(M)}.
\end{equation}

By Corollary~\ref{co:stable}, \eqref{eq:uniform-k}, and
\eqref{eq:uni}, together with $P_gh=h$ and the uniform boundedness
of $P_g$, 
\begin{align*}
Q_{g_s,u}(h,h) &\ge Q_{g,u}(h-k,h-k) -C\delta\|\nabla h\|_{L^2(M)}^2\\
&\ge (\lambda_0-C\delta)\|\nabla h\|_{L^2(M)}^2.
\end{align*}
Since the first variation of $\mathcal F_u$ at $g$ vanishes by Lemma~\ref{le:sec}, choosing $\rho$ sufficiently small, we obtain by Taylor's formula, 
\[
\mathcal F_u(\hat g_1)-\mathcal F_u(g) = \int_0^1(1-s)Q_{g_s,u}(h,h)\,ds \ge \frac{\lambda_0}{4}\|\nabla h\|_{L^2(M)}^2.
\]

After converting the functional difference to the mass difference, we have 
\[
m_{\ADM}(g_1)-m_{\ADM}(g) \ge \epsilon \|\nabla h\|_{L^2(M)}^2 \ge \epsilon\,d_g([g_1],[g])^2
\]
for some $\epsilon>0$. If equality holds, then $\nabla h=0$. Since $h$ decays at infinity, $h=0$, and hence $[g_1]=[g]$.
\end{proof}

\section{Scalar curvature deformation and tensor divergence equation}\label{se:de}

In this section, we prove Theorem~\ref{th:dec}. The aim is to solve the linearized scalar curvature and boundary constraints while controlling the correction in the Dirichlet energy norm. We complement these underdetermined constraint equations with an auxiliary equation involving a function $v$, together with suitable boundary conditions. A similar idea has been implemented in \cite[Appendix A]{Huang-Jang:2022}. The main analytical issue, however, is to control the energy norms of $k, v$, and to this end we introduce entirely new complementing equations.

In Section~\ref{se:scalar}, we establish solvability of the coupled boundary value problem. In Section~\ref{se:div}, we prove $\|\nabla v\|_{L^2(M)} \le C\|\nabla k\|_{L^2(M)}$. The proof relies on solving a tensor divergence equation with zero boundary values. Combining Proposition~\ref{pr:k} and Proposition~\ref{pr:en} then proves Theorem~\ref{th:dec}.

Throughout, $(M,g)$ is an $n$-dimensional asymptotically flat manifold of rate $q$ with compact boundary $\Sigma$, and recall $L_g^*v:= \nabla_g^2 v - (\Delta_g v) g - v \Ric_g$.

\subsection{Prescribing scalar curvature and Bartnik boundary data}\label{se:scalar}

To complement the underdetermined geometric constraints, we impose the auxiliary equation
\[
\Delta_g k+L_g^*v=0
\]
and strengthen the boundary condition $k^\intercal=0$ to $k|_\Sigma=0$. These conditions are chosen both to give an elliptic boundary value problem and to eliminate the boundary term when deriving the energy estimates.
\begin{proposition}\label{pr:k}
Given $f\in C^{0,\alpha}_{-q-2}(M), \phi\in C^{1,\alpha}(\Sigma)$, there exists a unique pair consisting of a symmetric $(0,2)$-tensor $k\in C^{2,\alpha}_{-q}(M)$ and a function $v\in C^{2,\alpha}_{-q}(M)$ satisfying
\begin{align*}
	&R_g'(k) = f, \qquad \Delta_g k + L_g^* v=0 \qquad \mbox{ in } M\\
	&k=0, \qquad H_g'(k) = \phi \qquad \mbox{ on } \Sigma. 
\end{align*}
The energy of $k$ satisfies
\begin{equation}\label{eq:energyk2}
\|\nabla k\|_{L^2(M)}^2 = \int_Mvf\,d\mu_g-2\int_\Sigma v\phi\,d\sigma_\gamma.
\end{equation}
\end{proposition}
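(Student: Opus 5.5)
The plan has three parts: derive the energy identity directly from the equations, show the system is an elliptic boundary value problem of index zero, and prove injectivity using the energy identity.

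\emph{Energy identity.} Suppose $(k,v)$ solves the system. Pair the equation $\Delta_g k+L_g^*v=0$ with $k$ and integrate over $M$. Since $k|_\Sigma=0$ and $k\in C^{2,\alpha}_{-q}$ with $q>\frac{n-2}{2}$, the boundary term at $\Sigma$ and the flux at infinity both vanish. This gives
\[
\|\nabla k\|_{L^2(M)}^2=\int_M\langle L_g^*v,k\rangle\,d\mu_g .
\]
Now apply \eqref{eq:ibp2} with $h=k$. The term $\langle vA_g-\nu_g(v)\gamma,k^\intercal\rangle$ drops because $k^\intercal=0$, and we get
\[
\int_M\langle k,L_g^*v\rangle\,d\mu_g=\int_M vR'_g(k)\,d\mu_g-2\int_\Sigma vH'_g(k)\,d\sigma_\gamma .
\]
Substituting $R'_g(k)=f$ and $H'_g(k)=\phi$ gives \eqref{eq:energyk2}. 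The decay rates make every boundary term at infinity vanish: the products are $O(r^{-2q-1})$, and $2q+1>n-1$.

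\emph{Fredholm property.} The unknowns are $(k,v)$, with $\frac{n(n+1)}{2}+1$ components. Assign Douglis--Nirenberg weights so that both $k$ and $v$ are second order. The principal part of $R'_g(k)$ is $-\Delta\tr k+\Div\Div k$, which is second order in $k$ and zeroth order in $v$. The principal part of $\Delta k+\nabla^2v-(\Delta v)g$ is second order in both.

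I would check ellipticity of the symbol at a covector $\xi$. Suppose $|\xi|^2 k+\xi\otimes\xi\, v-|\xi|^2vg=0$ and $-|\xi|^2\tr k+k(\xi,\xi)=0$. The first equation gives $k=v\bigl(g-\hat\xi\otimes\hat\xi\bigr)$, where $\hat\xi=\xi/|\xi|$. Then $\tr k=(n-1)v$ and $k(\xi,\xi)=0$, so the second equation gives $-(n-1)|\xi|^2v=0$. Hence $v=0$ and $k=0$.

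The boundary conditions are $k=0$, which supplies $\frac{n(n+1)}{2}$ conditions, and $H'_g(k)$, which supplies one condition. These match the count needed for the half system. I expect the \textbf{main obstacle} to be verifying the Lopatinski--Shapiro condition. To do this, I would freeze coefficients on the half-space and look for decaying solutions $e^{i\xi\cdot y}$. With $k|_{x_n=0}=0$, the condition $H'(k)=\frac12\partial_\nu\tr k^\intercal$ must force the solution to be trivial. I would carry out this check by an energy argument on the model problem rather than by explicit ODE computation. Indeed, the identity above on the half-space with $f=0$ and $\phi=0$ gives $\int|\nabla k|^2=0$, so $k=0$. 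Then $L^*v=0$ forces $\nabla^2 v=0$, and decay gives $v=0$.

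Given ellipticity, the standard weighted Schauder theory for elliptic systems on asymptotically flat manifolds applies (as in Bartnik and Lockhart--McOwen). For $q$ in the nonexceptional range $(0,n-2)$, the map
\[
(k,v)\mapsto\bigl(R'_g(k),\ \Delta k+L^*_gv,\ k|_\Sigma,\ H'_g(k)\bigr),
\]
viewed from $C^{2,\alpha}_{-q}$ to $C^{0,\alpha}_{-q-2}\times C^{0,\alpha}_{-q-2}\times C^{2,\alpha}(\Sigma)\times C^{1,\alpha}(\Sigma)$, is Fredholm. Deforming $g$ to the Euclidean background and relaxing the lower order terms shows the index equals that of the model, which is self-adjoint up to compact terms. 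So the index is zero.

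\emph{Injectivity.} Set $f=0$ and $\phi=0$. The energy identity gives $\nabla k=0$, and decay then gives $k=0$. So $L_g^*v=0$ with $v\in C^{2,\alpha}_{-q}$, that is, $v$ lies in the static kernel and decays. Taking the trace gives $\Delta v=-\frac{R}{n-1}v$, and $\nabla^2v=v\Ric$. Along any geodesic, $v$ and $\nabla v$ satisfy a linear ODE. Since $v\to0$ with decaying gradient, a unique continuation argument at infinity, in the style of Corvino's result that decaying static potentials vanish, forces $v\equiv0$. Injectivity together with index zero gives surjectivity, and hence unique solvability. The identity \eqref{eq:energyk2} then follows from the first step.
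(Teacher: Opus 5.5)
Your energy identity, symbol computation, complementing-condition check and kernel argument all match the paper's proof. For the complementing condition, the paper applies your energy identity to a single tangential Fourier mode $e^{i\zeta\cdot y}K(t)$ with the Hermitian pairing in $t$, obtaining $\int_0^\infty(|K'|^2+|K|^2)\,dt=0$. You should work mode by mode in the same way, since plane waves are not integrable over the half-space.

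The step that does not hold up as written is the claim that the index is zero. Formal self-adjointness does not give this directly, because it pairs the problem at decay rate $r^{-q}$ with the problem at the complementary rate $r^{-(n-2-q)}$, not with itself. Concretely, the cokernel of $F$ on $C^{2,\alpha}_{-q}$ is the kernel of the adjoint problem for $(\tau,f)=O(r^{-(n-2-q)})$. Since $n-2-q<\frac{n-2}{2}$, your energy-based injectivity argument fails at that weight. The flux terms at infinity are of order $r^{2q-(n-2)}$ and do not vanish, and $\nabla\tau$ need not lie in $L^2$. Deforming $g$ toward a metric that is Euclidean near infinity does not settle this either: homotopy invariance only moves you to another problem of the same kind on $M$, which still has boundary $\Sigma$, and that index still has to be computed.

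The paper closes this gap with two ingredients you would need to add. First, a Green's formula computation shows that the adjoint boundary problem is again
\[
R'_g(\tau)=0,\qquad \Delta_g\tau+L_g^*f=0 \ \text{ in } M,\qquad \tau|_\Sigma=0,\qquad H'_g(\tau)=0 \ \text{ on } \Sigma .
\]
So the problem is exactly self-adjoint, boundary conditions included. Second, the paper proves decay improvement: any kernel element with decay $r^{-\delta'}$, $0<\delta'<n-2$, lies in $C^{2,\alpha}_{-\delta}$ for every $\delta<n-2$. To see this, cut off near infinity, so that $P_0(\chi(k,v))\in C^{0,\alpha}_{-2-\delta'-q}$ for the Euclidean model $P_0$. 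Since $P_0$ is an isomorphism $C^{2,\alpha}_{-\mu}\to C^{0,\alpha}_{-\mu-2}$ for $0<\mu<n-2$, you can iterate. After this improvement the energy identity applies, and your injectivity argument also kills the cokernel.

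Alternatively, you can keep your index formulation by combining duality with the absence of exceptional weights in $(0,n-2)$. The index is then the same at $-q$ and at $-(n-2-q)$, while duality makes these two indices negatives of each other, so both vanish. This route still rests on the nonexceptionality of $(0,n-2)$, which you assert but do not justify.

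A minor slip: tracing $L_g^*v=0$ gives $\nabla^2 v=v\bigl(\Ric_g-\frac{R_g}{n-1}g\bigr)$, not $v\Ric_g$. Your geodesic ODE argument for decaying static potentials is unaffected.
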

The identity \eqref{eq:energyk2} follows directly from \eqref{eq:ibp2}. The existence statement follows from the next two lemmas, which show that the coupled system is an elliptic boundary value problem and an isomorphism on the corresponding weighted spaces.

\begin{lemma}\label{le:elliptic}
Consider the map from a symmetric $(0,2)$-tensor $k$ and a function $v$ given by
\begin{align}\label{eq:L}
(k, v)\mapsto F(k, v) := \left( R'_g(k),\, \Delta_g k+L_g^*v,\, k|_\Sigma,\, H'_g(k)\right),
\end{align}
where the first two components are defined on $M$ and the last two on $\Sigma$. Then the system is elliptic and the boundary conditions satisfy the complementing condition.
\end{lemma}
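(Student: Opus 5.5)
The plan is to verify both properties in the Agmon--Douglis--Nirenberg framework, freezing coefficients at a point and keeping only principal parts. For the interior, assign the unknowns $(k,v)$ the weights $t_k=t_v=2$ and both interior equations the weight $s=0$, so every entry is of order two. Using the convention that $\partial_i\mapsto \sqrt{-1}\,\xi_i$, the principal symbols at $\xi\neq0$ (with $g$ frozen to $\delta$) are:
\begin{itemize}
\item $\sigma(R'_g)(\xi)k=|\xi|^2\tr k-k(\xi,\xi)$;
\item $\sigma(\Delta_g)(\xi)k=-|\xi|^2k$;
\item $\sigma(L_g^*)(\xi)v=\bigl(|\xi|^2\delta-\xi\otimes\xi\bigr)v$.
\end{itemize}
The curvature term $-v\Ric_g$ is lower order and drops out. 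Suppose $(k,v)$ lies in the kernel of the principal symbol. The second equation gives $k=(\delta-\hat\xi\otimes\hat\xi)v$, where $\hat\xi=\xi/|\xi|$. Substituting into the first gives $|\xi|^2(n-1)v-0=0$, so $v=0$ and hence $k=0$. Therefore the system is elliptic.

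Proper ellipticity is automatic since $n\ge3$. Concretely, the determinant is $c\,|\xi|^{2(N+1)}$ up to sign, where $N=n(n+1)/2$, so for each tangential $\xi'$ exactly $N+1$ roots lie in each half-plane. This count matches the number of boundary conditions: the $N$ Dirichlet conditions $k|_\Sigma$ have weight $r=-2$, and the single condition $H'_g(k)$ has weight $r=-1$.

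For the complementing condition, freeze coefficients at a boundary point. Take the half-space $\{x^n>0\}$ with the flat metric, with $\nu=\partial_n$, and drop the $A$ and $H$ terms, since they are lower order. Fix $\xi'\neq0$ and consider solutions of the form $k=e^{\sqrt{-1}x'\cdot\xi'}\kappa(x^n)$, $v=e^{\sqrt{-1}x'\cdot\xi'}w(x^n)$ that decay exponentially as $x^n\to\infty$. These must solve the principal homogeneous system together with the principal boundary conditions $\kappa(0)=0$ and $\tfrac12\partial_n\tr\kappa^\intercal-\Div\omega=0$. The task is to show that such a solution vanishes identically. I will run an energy argument mirroring \eqref{eq:ibp2}, with Hermitian pairings and integration over $x^n\in(0,\infty)$.

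First, pair $\Delta k=-L^*v$ with $\bar k$. Since $k=0$ on the boundary and the solution decays, integration by parts gives
\[
\int_0^\infty|\nabla k|^2\,dx^n=\int_0^\infty\langle L^*v,\bar k\rangle\,dx^n .
\]
Second, move $L^*$ onto $k$ by the flat, one-mode version of \eqref{eq:ibp2}. This produces $\int \bar v\,R'(k)$, which is zero, plus boundary terms. Those boundary terms involve $k^\intercal$, which vanishes, and $2vH'(k)$, which also vanishes. Hence $\nabla k\equiv0$. Decay then forces $k\equiv0$. The equation becomes $L^*v=\partial^2v-(\Delta v)\delta=0$. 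Taking the trace gives $(1-n)\Delta v=0$, so $\partial^2v=0$. Then $v$ is affine in $x$; combined with the $e^{\sqrt{-1}x'\cdot\xi'}$ dependence and decay, this gives $v\equiv0$.

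The main obstacle is making sure the boundary terms in this one-mode integration by parts genuinely vanish. I must check that in the frozen, principal-part setting the boundary contribution reduces exactly to terms in $k^\intercal$ and $vH'(k)$, the way it does in \eqref{eq:ibp2}. The normal components $k_{0a}$ and $k_{00}$ do appear inside $H'(k)$, but they are all killed by the full Dirichlet condition $k|_\Sigma=0$. I also need the signs of the Hermitian pairing to line up so that $\int|\nabla k|^2$, rather than some indefinite quantity, is equated to zero. The Dirichlet condition is the reason $k^\intercal=0$ was strengthened to $k|_\Sigma=0$. As a consistency check, I will also confirm from the identity $R'(\Delta k)=\Delta R'(k)$ together with $R'(L^*v)=(n-1)\Delta^2v$ (valid in the flat case) that $v$ satisfies a fourth-order equation. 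This is compatible with the root count above.
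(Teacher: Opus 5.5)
Your proposal is correct and follows essentially the same route as the paper. The paper uses the same symbol computation $k=(\delta-\hat\xi\otimes\hat\xi)v$, which forces $(n-1)|\xi|^2v=0$, and the same one-mode energy identity $0=\int_0^\infty \overline{V}\,R_0'(K)\,dt=\int_0^\infty\langle K,L_0^*V\rangle\,dt=\int_0^\infty(|K'|^2+|\zeta|^2|K|^2)\,dt$, with boundary terms removed by $K(0)=0$ and the principal part of $H'(K)$; it then concludes $V=0$ from $L_0^*V=0$, where you add only a proper-ellipticity and root-count check that the paper leaves implicit.
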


\begin{proof}
For $\xi\ne0$, vanishing of the principal symbol of the second
interior equation gives
\[
k_{ij} = v\left(\delta_{ij} -\frac{\xi_i\xi_j}{|\xi|^2}\right).
\]
Substituting into the symbol of the scalar equation yields $(n-1)|\xi|^2v=0$. Thus $v=0$ and $k=0$. Hence the principal symbol is an isomorphism, and the interior system is elliptic.

For the complementing boundary condition, freeze the coefficients at a boundary point and choose flat coordinates $(y^1,\dots,y^{n-1},t)$ centered at that point, where \(t\ge0\) is the normal variable, the boundary is \(\{t=0\}\), and $g_{ij} = \delta_{ij}$ at the origin. For $\zeta = (\zeta_1, \dots, \zeta_{n-1})$ with $|\zeta|=1$, consider decaying solutions of the form
\[
v(y,t)=e^{i \zeta \cdot y}V(t), \qquad k_{ij}(y,t)=e^{i \zeta \cdot y}K_{ij}(t).
\]
The principal interior equations become
\begin{align*}
 R_0' (K):=-\Delta_0 (\tr K) + \Div_0 \Div_0 K &=0 \\
\Delta_0 K_{ij}+\partial_i\partial_jV-(\Delta_0 V)\delta_{ij}&=0, 
\end{align*}
where, after tangential Fourier transform,
\[
\partial_A\mapsto i\zeta_A, \qquad \partial_n\mapsto \frac{d}{d t} \qquad \Delta_0 \mapsto \frac{d^2}{dt^2} - 1, 
\]
where  $A=1, \dots, n-1$ denotes the tangential indices. The boundary conditions are
\[
	K_{ij}(0)=0, \quad \sum_A K_{AA}'(0)=0.
\]
Using the Hermitian pairing for complex tensors, integration by parts in $t$ therefore gives
\[ 
0 =\int_0^\infty \overline V\,R'_0(K)\,dt=\int_0^\infty \langle K,L_0^*V\rangle_{\mathbb C}\,dt=\int_0^\infty\bigl(|K'|^2+|K|^2\bigr)\,dt.
\]
Here the boundary terms vanish by the boundary conditions and decay. Hence $K=0$. The normal-normal component of $L_0^*V=0$ is $V=0$, so the decaying solution is trivial.
\end{proof}

\begin{lemma}\label{le:iso}
Let $0<\delta<n-2$. Let $F$ be the elliptic boundary operator defined in Lemma~\ref{le:elliptic}, acting on pairs $(k,v)$ whose components lie in $C^{2,\alpha}_{-\delta}$, with the natural weighted H\"older target spaces for its two interior and two boundary components. Then $F$ is an isomorphism.
\end{lemma}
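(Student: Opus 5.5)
Because $F$ is elliptic with complementing boundary conditions (Lemma~\ref{le:elliptic}), the plan is to prove Fredholmness of index zero on the weighted spaces and then show injectivity. For the Fredholm property, note that at infinity the operator is asymptotic to the Euclidean model $(k,v)\mapsto(-\Delta\tr k+\Div\Div k,\ \Delta k+\nabla^2v-(\Delta v)\delta)$. I would first check that this model system is (after a triangular reduction) a Laplace-type system. Taking the trace of the second equation gives $\Delta\tr k-(n-1)\Delta v=0$. Taking its divergence gives $\Delta(\Div k)=0$, since $\Div(\nabla^2v-\Delta v\,\delta)=0$. Consequently $\Div\Div k$ and $\tr k$ are harmonic modulo lower order, and the first equation forces $\Delta v$ to be controlled as well. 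The indicial roots are therefore those of the Laplacian, namely integers $\le 2-n$ and $\ge0$. For $0<\delta<n-2$ the weight is nonexceptional, so the standard weighted Schauder theory for elliptic boundary problems on asymptotically flat manifolds (Bartnik, Lockhart--McOwen) gives that $F$ is Fredholm on $C^{2,\alpha}_{-\delta}$. I would compute the index by deforming $g$ to a metric that is exactly Euclidean outside a compact set and using homotopy invariance along the continuous family of Fredholm operators at fixed nonexceptional weight. I would then use that the index does not change within the interval $(0,n-2)$, which contains no indicial root.

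The index is zero by a duality argument. The formal adjoint of $F$ with respect to the pairing in \eqref{eq:ibp2} is a system of the same type: the first two components pair with $(w,\ell)$ so that the adjoint interior operator is $(w,\ell)\mapsto(L^*_gw+\Delta_g\ell,\ R'_g(\ell))$, which has the same form with the roles swapped. The cokernel at weight $-\delta$ is the kernel of the adjoint at the dual weight $-(n-2-\delta)$, which also lies in $(0,n-2)$. Therefore it suffices to prove injectivity for all weights in $(0,n-2)$, and surjectivity follows.

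For injectivity, suppose $F(k,v)=0$ with $k,v\in C^{2,\alpha}_{-\delta}$. Elliptic theory on the exceptional-free range improves the decay to $k,v=O(r^{2-n})$ with derivatives decaying correspondingly. So $k\in W^{1,2}_{-q}$ and all boundary terms at infinity vanish, because $r^{2-n}\cdot r^{1-n}\cdot r^{n-1}\to0$. By \eqref{eq:ibp2} and $k|_\Sigma=0$, $H'_g(k)=0$, $R'_g(k)=0$, we have
\[
0=\int_M vR'_g(k)\,d\mu_g=\int_M\langle k,L^*_gv\rangle\,d\mu_g=-\int_M\langle k,\Delta_gk\rangle\,d\mu_g=\|\nabla k\|^2_{L^2(M)},
\]
where the last boundary term vanishes since $k|_\Sigma=0$. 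Hence $k$ is parallel and decaying, so $k=0$. Then $L^*_gv=0$ with $v\to0$. Tracing gives $(n-1)\Delta_gv+R_gv=0$ up to sign, and with $v=O(r^{2-n})$ the standard result says that a decaying solution of $L_g^*v=0$ on an asymptotically flat end vanishes identically. On the end, $\nabla^2v=v\Ric+\frac{1}{n-1}(\ldots)g$ is an ODE along geodesics, so decay forces $v\equiv0$ outside a compact set, and unique continuation for this overdetermined system extends this to all of $M$. The decay step in this argument needs care and is the main obstacle. In particular, the initial weight $\delta$ may be small, so the integrations by parts are justified only after bootstrapping decay via the asymptotic expansion of solutions to the model Laplace-type system.
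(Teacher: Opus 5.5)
Your architecture is the paper's: weighted Fredholm theory; for the kernel, a decay bootstrap, the identity $0=\int_M vR'_g(k)\,d\mu_g=\|\nabla k\|_{L^2(M)}^2$, and vanishing of decaying static potentials; for the cokernel, the kernel of the adjoint problem at the complementary rate $n-2-\delta$. The gap is in the cokernel step. You check only that the adjoint \emph{interior} operator $(f,\tau)\mapsto\bigl(R'_g(\tau),\Delta_g\tau+L_g^*f\bigr)$ has the same form. You then assume, without checking, that the adjoint boundary conditions are again $\tau|_\Sigma=0$ and $H'_g(\tau)=0$. Your injectivity argument needs exactly these; otherwise the boundary terms in \eqref{eq:ibp2} and in Green's formula for $\Delta_g$ survive. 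They are not automatic. You must pair a cokernel element $(f,\tau,\kappa,\phi)$ with $F(k,v)$ and integrate by parts, using \eqref{eq:ibp2} for both $\int_M fR'_g(k)$ and $\int_M\langle\tau,L_g^*v\rangle$. Then vary the Cauchy data $k|_\Sigma$, $\nabla_\nu k$, $v|_\Sigma$, $\nu(v)$ independently. When $k|_\Sigma=0$ one has $H'_g(k)=\frac12\gamma^{ab}(\nabla_\nu k)_{ab}$, so varying $\nabla_\nu k$ only shows that $\tau|_\Sigma$ is a tangential multiple of $\gamma$, tied to $2f+\phi$. The $\nu(v)$-variation forces $\tr_\gamma\tau^\intercal=0$, which gives $\tau|_\Sigma=0$ and $\phi=-2f$. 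The $v$-variation then yields $H'_g(\tau)=0$, and the $k|_\Sigma$-variation determines $\kappa$. Only after this does your kernel argument apply to $(\tau,f)$, which decays like $r^{-(n-2-\delta)}$.

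Second, your reason for the absence of exceptional weights in $(0,n-2)$ is wrong as stated. For the Euclidean model, $\Div k$ is harmonic and $\Delta\tr k=\Div\Div k=(n-1)\Delta v$. So $\tr k$ and $v$ are biharmonic, not harmonic. Biharmonic homogeneous functions such as $x_i r^{2-n}$ (degree $3-n$) and $r^{4-n}$ have degrees inside $(2-n,0)$ when $n\ge4$, respectively $n\ge5$. Your reduced equations therefore do not exclude exceptional weights in the range; the full system does. A homogeneous solution of degree $d\in(2-n,0)$ on $\mathbb R^n\setminus\{0\}$ extends to a distributional solution on $\mathbb R^n$: its image under $P_0$ is supported at the origin but homogeneous of degree $d-2>-n$, hence zero. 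Its Fourier transform is then supported at the origin, so it is a polynomial, hence zero. This is what \cite[Theorem~3]{Lockhart-McOwen:1983} provides. The paper uses it in the form that $P_0:C^{2,\alpha}_{-\mu}(\mathbb R^n)\to C^{0,\alpha}_{-\mu-2}(\mathbb R^n)$ is an isomorphism for $0<\mu<n-2$. That isomorphism is also what makes your decay bootstrap rigorous: cut off, compare $P_g$ with $P_0$ to gain $q$ in decay, and iterate. Once kernel and cokernel are both trivial, your homotopy computation of the index is unnecessary.
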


\begin{proof}
By Lemma~\ref{le:elliptic} and the weighted Fredholm theory for elliptic boundary problems on asymptotically flat manifolds, $F$ is Fredholm for every $0<\delta<n-2$. It therefore suffices to prove that its kernel and cokernel are trivial.

Define the interior operator
\[
P_g(k,v):=\bigl(R'_g(k),\Delta_gk+L_g^*v\bigr),
\]
and let $P_0$ denote its Euclidean constant-coefficient model:
\[
P_0(k,v) = \left( -\Delta\tr k+\partial_i\partial_jk_{ij}, \Delta k_{ij}+\partial_i\partial_jv-(\Delta v)\delta_{ij} \right).
\]
The principal-symbol calculation in Lemma~\ref{le:elliptic} shows that $P_0$ is a square, second-order constant-coefficient elliptic system. By the weighted constant-coefficient isomorphism theorem \cite[Theorem~3]{Lockhart-McOwen:1983}, together with standard scaled Schauder estimates,
\[
P_0:C^{2,\alpha}_{-\mu}(\mathbb R^n) \longrightarrow C^{0,\alpha}_{-\mu-2}(\mathbb R^n)
\]
is an isomorphism for every $0<\mu<n-2$.

We show that the kernel of $F$ is trivial. Let $(k,v)\in C^{2,\alpha}_{-\delta'}(M)$ satisfy $F(k,v)=0$. Choose a smooth cutoff function $\chi$ such that $\chi=0$ for $r\le R$ and $\chi=1$ for $r\ge 2R$. Then $\chi(k,v)$ may be regarded as a pair defined on $\mathbb R^n$, extended by zero across the interior region. Since $g-g_{\mathbb E}\in C^{2,\alpha}_{-q}$, the difference of the operators has the schematic form 
\[
P_0-P_g=a(x)\partial^2+b(x)\partial+c(x),
\]
 where $a\in C^{0,\alpha}_{-q}, b\in C^{0,\alpha}_{-q-1}, c\in C^{0,\alpha}_{-q-2}$. Therefore, $P_0\bigl(\chi(k,v)\bigr)\in C^{0,\alpha}_{-2-\delta'-q}(\mathbb R^n)$.  The isomorphism property of $P_0$ then implies that $k,v\in C^{2,\alpha}_{-\delta}$ for any $0<\delta<\min\{\delta'+q,n-2\}$.  Iterating this argument finitely many times yields $k,v\in C^{2,\alpha}_{-\delta}$ for every $0<\delta<n-2$.

This improved decay implies that all boundary terms at infinity vanish. Hence,
\begin{align*}
	0&=\int_M vR'_g(k)\,d\mu_g \\
	&=\int_M \langle k,L_g^*v\rangle_g\,d\mu_g
		+2\int_\Sigma vH'_g(k)\,d\sigma_g\\
	&=\int_M |\nabla k|^2\,d\mu_g.
\end{align*}
It follows that $k\equiv0$, and hence $L_g^*v=0$. A nontrivial static potential on an asymptotically flat end is asymptotic to an affine function. Since $v$ decays, it follows that $v\equiv0$.

We show that the cokernel is trivial. Let $(f, \tau, \kappa,\phi)$ be a cokernel element: for every pair $(k,v)$ compactly supported up to the boundary,
\begin{align*}
0 &=\int_M \left\langle \bigl(R'_g(k),\Delta_gk+L_g^*v\bigr),(f,\tau) \right\rangle_g\,d\mu_g\\
&\quad+ \int_\Sigma \left\langle\bigl(k,H'_g(k)\bigr), (\kappa,\phi)\right\rangle_\gamma \,d\sigma_\gamma.
\end{align*}
Integration by parts gives
\begin{align*}
0 &=\int_M \left\langle \bigl(R'_g(\tau),\Delta_g\tau+L_g^*f\bigr),(v,k) \right\rangle_g\,d\mu_g\\
&\quad+ \int_\Sigma \left\langle \bigl(fA-\nu(f)\gamma+\nabla_\nu\tau,2f\bigr), \bigl(k,H'_g(k)\bigr) \right\rangle_\gamma\,d\sigma_\gamma\\
&\quad- \int_\Sigma \left\langle \bigl(vA-\nu(v)\gamma+\nabla_\nu k,2v\bigr), \bigl(\tau,H'_g(\tau)\bigr) \right\rangle_\gamma\,d\sigma_\gamma\\
&\quad+ \int_\Sigma \left\langle(\kappa,\phi),\bigl(k,H'_g(k)\bigr)\right\rangle_\gamma \,d\sigma_\gamma.
\end{align*}
Varying $(k,v)$ gives
\[
R'_g(\tau)=0, \qquad \Delta_g\tau+L_g^*f=0 \quad\text{in }M,
\]
\[
\tau=0, \qquad H'_g(\tau)=0 \quad\text{on }\Sigma,
\]
and $\kappa=-fA+\nu(f)\gamma-\nabla_\nu\tau$, $\phi=-2f$. 

By weighted elliptic duality, a cokernel element has the complementary decay $(\tau,f)=O\bigl(r^{-(n-2-\delta)}\bigr)$. Since $0<n-2-\delta<n-2$, the decay-improvement argument above applies to $(\tau,f)$. The preceding kernel argument then gives $\tau=f=0$, and the boundary equations imply $\kappa=\phi=0$. Thus the cokernel is trivial. Since $F$ is Fredholm, it is an isomorphism.
\end{proof}

A direct corollary of independent interest is the following nonlinear version, which gives an effective version of a local submersion result of Anderson and Jauregui~\cite[Proposition 2.4]{Anderson-Jauregui:2019}.
\begin{corollary}
Fix a background asymptotically  flat manifold $(M, \bar g)$. The map from $(g, v) \in \mathcal M^{2, \alpha}_{-q}(M) \times C^{2, \alpha}_{-q}(M)$ defined by 
\[
(g, v) \mapsto \Big(R_g, \Delta_{\bar g} g+(\nabla^2_gv-(\Delta_gv)g-v\Ric_g ), g|_{\Sigma}, H_g\Big)
\]
with values in the corresponding target weighted H\"older spaces is a local diffeomorphism at $(\bar g, 0)$.
\end{corollary}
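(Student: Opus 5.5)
The plan is to apply the implicit function theorem to the nonlinear map
\[
\mathcal N(g,v):=\Big(R_g,\ \Delta_{\bar g} g+\nabla^2_gv-(\Delta_gv)g-v\Ric_g,\ g|_{\Sigma},\ H_g\Big),
\]
regarded as a map from an open neighborhood of $(\bar g,0)$ in $\mathcal M^{2,\alpha}_{-q}(M)\times C^{2,\alpha}_{-q}(M)$ into $C^{0,\alpha}_{-q-2}(M)\times C^{0,\alpha}_{-q-2}(M)\times C^{2,\alpha}(\Sigma)\times C^{1,\alpha}(\Sigma)$. First I will check that $\mathcal N$ is well defined and smooth (indeed real analytic) between these Banach spaces. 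On the end, $R_g$, $\Ric_g$ and the Christoffel symbols are polynomial in $g-g_{\mathbb E}$, $g^{-1}$ and up to two derivatives. Since $g-g_{\mathbb E}\in C^{2,\alpha}_{-q}$, the curvature terms lie in $C^{0,\alpha}_{-q-2}$. The term $\Delta_{\bar g}g=\Delta_{\bar g}(g-\bar g)$ lies in $C^{0,\alpha}_{-q-2}$ because $\bar g$ is $\bar g$-parallel. The $v$-terms lie in $C^{0,\alpha}_{-q-2}$ because $v\in C^{2,\alpha}_{-q}$ and $\Ric_g$ decays like $r^{-q-2}$. The boundary maps $g\mapsto g|_\Sigma$ and $g\mapsto H_g$ are smooth into $C^{2,\alpha}(\Sigma)$ and $C^{1,\alpha}(\Sigma)$. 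Smoothness follows from standard facts about multiplication and composition in weighted H\"older spaces. Here $g|_\Sigma$ denotes the full restriction of the $(0,2)$-tensor, matching the condition $k|_\Sigma=0$.

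Next I will compute the linearization at $(\bar g,0)$. Since $v=0$, every term quadratic in $(g-\bar g,v)$ drops out, and
\[
D\mathcal N|_{(\bar g,0)}(k,w)=\Big(R'_{\bar g}(k),\ \Delta_{\bar g}k+L^*_{\bar g}w,\ k|_\Sigma,\ H'_{\bar g}(k)\Big).
\]
Indeed, the derivative of $\nabla^2_gv-(\Delta_gv)g-v\Ric_g$ in the $g$-direction vanishes at $v=0$, and its derivative in $v$ is $L^*_{\bar g}w$. The result is exactly the operator $F$ of Lemma~\ref{le:elliptic} with background $\bar g$. By Lemma~\ref{le:iso} with $\delta=q\in(\frac{n-2}{2},n-2)\subset(0,n-2)$, it is an isomorphism between the stated spaces. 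The inverse function theorem in Banach spaces then gives that $\mathcal N$ is a local diffeomorphism at $(\bar g,0)$.

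The main point to verify is that the target spaces match exactly the ``natural weighted H\"older target spaces'' of Lemma~\ref{le:iso}. In particular, the boundary component $k|_\Sigma$ must be taken in $C^{2,\alpha}(\Sigma)$ for symmetric tensors (not merely tangential ones), and the pair must be open in the domain. With those choices the argument reduces entirely to the isomorphism already established.
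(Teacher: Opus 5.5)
Your proposal is correct and is essentially the paper's argument: the differential at $(\bar g,0)$ is exactly the operator $F$ of Lemma~\ref{le:elliptic}, which is an isomorphism by Lemma~\ref{le:iso} with $\delta=q$, so the inverse function theorem applies. Your checks of smoothness and of the matching target spaces are details the paper leaves implicit; note that the theorem you actually use is the inverse function theorem, not the implicit function theorem named in your opening sentence.
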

\begin{proof}
The differential of this map at $(\bar g, 0)$ is precisely the operator $F$ in Lemma~\ref{le:elliptic}. The result therefore follows from Lemma~\ref{le:iso} and the inverse function theorem.
\end{proof}

\subsection{The tensor divergence equation with prescribed boundary value}\label{se:div}

The remaining task is to estimate the auxiliary function $v$ from the equation $\Delta_gk+L_g^*v=0$. For this purpose, we seek a $(0,2)$-tensor $T$, not necessarily symmetric, satisfying
\[
\Div_gT=\nabla v,\qquad T|_\Sigma=0, \qquad \|\nabla T\|_{L^2(M)} \le C\|\nabla v\|_{L^2(M)},
\]
where the divergence operator is defined as $(\Div_g T)_i=\nabla^j T_{ij}$. The key property is the zero boundary condition. (Although $T=vg$ solves the divergence equation, it does not satisfy the boundary condition in general.) This divergence equation will be used to  convert the energy of $v$ into a pairing with its Hessian, while the zero boundary value eliminates the boundary terms. 

Divergence equations with the zero boundary condition have been studied extensively, for example, in the work of Bogovski\u{\i}  on the divergence equation for vector fields~\cite{Bogovskii:1979}. In the Euclidean case, our result follows from this result by solving the equation componentwise. In our setting, we work on general Riemannian domains and use elliptic theory to establish existence.

Let us set up the stage. Given a one-form $\eta$ on $U$ and a $(0,2)$-tensor $B$ prescribed on the boundary, we seek a tensor $T$ solving $\Div_g T = \eta$ and $T|_{\partial U}=B$. We address this problem in three steps:
\begin{enumerate}
\item Let $U$ be a bounded domain with smooth boundary, and assume that $\eta$ satisfies the compatibility condition
\[
\eta\in\mathcal P(U)^\perp,\qquad\mathcal P(U):=\left\{\omega\text{ a one-form on }U:\nabla\omega=0\right\},
\]
where the orthogonal complement is taken with respect to the $L^2$ inner product.
\item On an asymptotically flat manifold $M$, remove the compatibility condition on $\eta$.
\item Solve the problem with inhomogeneous boundary data.
\end{enumerate}

We begin with the first step.

\begin{lemma}\label{le:div}
Let $(U,g)$ be a Riemannian manifold with compact closure and smooth boundary. Let $\mathcal P(U)$ denote the space of parallel one-forms on $U$. There exists a constant $C>0$ such that the following holds. For every one-form $\eta\in L^2(U)$ satisfying
\begin{align}\label{eq:int}
\int_U \langle \eta,\omega\rangle\,d\mu_g=0 \qquad \text{for every }\omega\in\mathcal P(U),
\end{align}
there exists a $(0,2)$-tensor $T\in H^1(U)$ such that
\begin{align*}
\Div_g T=\eta, \qquad T|_{\partial U}=0,
\end{align*}
and
\begin{align}\label{eq:energy1}
\|\nabla T\|_{L^2(U)}\leq C\|\eta\|_{L^2(U)}.
\end{align}
\end{lemma}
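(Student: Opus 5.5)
We solve the tensor divergence equation variationally, through the adjoint operator. On $H^1_0(U)$-tensors the divergence has the formal adjoint $\Div_g^*\omega=-\nabla\omega$ acting on one-forms. Dual to the surjectivity of $\Div_g: H^1_0\to L^2$ (modulo the compatibility condition) is a closed-range / Korn-type estimate for $\nabla$ on one-forms. Concretely, we aim to prove that there is $C>0$ such that
\[
\|\omega\|_{L^2(U)}\le C\|\nabla\omega\|_{H^{-1}(U)}\qquad\text{for every one-form }\omega\in L^2(U)\text{ with }\omega\perp\mathcal P(U).
\]
This is a Ne\v{c}as-type inequality. Given it, the map $\nabla: L^2(U)\cap\mathcal P(U)^\perp\to H^{-1}(U)$ is injective with closed range. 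The closed range theorem then says its adjoint $\Div_g: H^1_0(U)\to L^2(U)$ (tensors with zero trace on $\partial U$) has range exactly $\mathcal P(U)^\perp$. It also gives the bound \eqref{eq:energy1}, by choosing $T$ to be the minimal-norm preimage, orthogonal to the kernel. Since $U$ is bounded, the full $H^1_0$ norm is comparable to $\|\nabla T\|_{L^2(U)}$ by the Poincar\'e inequality.

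A more constructive alternative, in the spirit of the paper's elliptic approach, is to set $T=\nabla\omega$ for a one-form $\omega$ solving the Dirichlet problem $\Div_g\nabla\omega=\nabla^*\nabla$-type system with $\omega|_{\partial U}=0$. This alone does not give $T|_{\partial U}=0$, so I prefer the duality route. The plan is as follows.

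First, prove the Ne\v{c}as inequality in the form $\|\omega\|_{L^2}\le C(\|\nabla\omega\|_{H^{-1}}+\|\omega\|_{H^{-1}})$ for all $\omega\in L^2(U)$. This is done by working in local coordinate charts with a partition of unity. In each chart $\nabla\omega=\partial\omega+\Gamma*\omega$, and the lower-order term $\Gamma*\omega$ is absorbed into $\|\omega\|_{H^{-1}}$. That reduces the problem componentwise to the classical scalar Ne\v{c}as inequality $\|f\|_{L^2}\le C(\|\partial f\|_{H^{-1}}+\|f\|_{H^{-1}})$ on Lipschitz (here smooth) domains. Alternatively, one can take the Euclidean case from Bogovski\u{\i}'s result, as the paper mentions, and localize.

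Second, remove the lower-order term. Suppose the improved inequality failed on $\mathcal P(U)^\perp$. Then there would be a sequence $\omega_j\perp\mathcal P(U)$ with $\|\omega_j\|_{L^2}=1$ and $\|\nabla\omega_j\|_{H^{-1}}\to0$. Since $L^2\hookrightarrow H^{-1}$ compactly (Rellich, using compact closure), a subsequence converges in $H^{-1}$. The first step then shows it is Cauchy in $L^2$, so it converges to some $\omega$ with $\|\omega\|=1$, $\nabla\omega=0$ distributionally, and $\omega\perp\mathcal P(U)$. This forces $\omega\in\mathcal P(U)\cap\mathcal P(U)^\perp=\{0\}$, a contradiction. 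This step uses that $\mathcal P(U)$ is finite dimensional, with dimension at most $n$. It also uses that a distributionally parallel $L^2$ form is smooth and parallel; this follows from local regularity, since $\nabla\omega=0$ is a first-order overdetermined system.

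Third, apply the duality argument. For $\eta\in\mathcal P(U)^\perp$, define the functional $\ell(\nabla\omega):=\int_U\langle\eta,\omega\rangle$ on the range of $\nabla$ in $H^{-1}$. It is well defined on the quotient by $\mathcal P(U)$ and bounded by $C\|\eta\|_{L^2}\|\nabla\omega\|_{H^{-1}}$. Extend it by Hahn--Banach (or by projecting in the Hilbert space $H^{-1}$) to a functional on all of $H^{-1}$. Riesz representation then produces $T\in H^1_0(U)$ with $\int\langle T,-\nabla\omega\rangle$ pairing appropriately, up to a sign. The result is $\Div_g T=\eta$ weakly, $T|_{\partial U}=0$, and $\|T\|_{H^1_0}\le C\|\eta\|_{L^2}$.

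The main obstacle is the first step: carrying the Ne\v{c}as inequality from the Euclidean scalar case to tensors on a Riemannian manifold with boundary. The charts near $\partial U$ must be handled so that cutoffs commute with the $H^{-1}$ norm up to compact errors. Those commutator errors are lower order and compact, so they are absorbed by the $\|\omega\|_{H^{-1}}$ term, and the compactness argument in the second step then removes them.
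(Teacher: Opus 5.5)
Your proposal is correct, but it takes a genuinely different route from the paper.

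\textbf{Your route.} You obtain existence by duality. The $L^2$-adjoint of $\Div_g$, acting on (not necessarily symmetric) tensors vanishing on $\partial U$, is $-\nabla$ on one-forms, so its $L^2$-kernel is exactly $\mathcal P(U)$. The Ne\v{c}as inequality gives $\nabla$ closed range. Your componentwise localization of it is fine: smooth cutoffs and the Christoffel terms $\Gamma*\omega$ act boundedly on $H^{-1}$, and $\psi_\alpha\phi\in H^1_0(U)$ for $\phi\in H^1_0$ of a boundary chart. The compact embedding $L^2\hookrightarrow H^{-1}$ then removes the lower-order term. The closed range theorem and Riesz representation produce $T\in H^1_0(U)$ with $\|T\|_{H^1}\le C\|\eta\|_{L^2}$.

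\textbf{The paper's route.} The paper builds $T$ explicitly. It uses exactly the ansatz you discarded, but with a weight that enforces the boundary condition: $T=\rho\nabla\tau$, where $\rho$ is the distance to $\partial U$ near the boundary. The one-form $\tau$ solves the degenerate elliptic problem $\Div_g(\rho\nabla\tau)=\eta$ weakly. It is obtained by Lax--Milgram on $H^1_\rho(U)\cap\mathcal P(U)^\perp$, using a weighted Sobolev (Hardy-type) inequality and compactness modulo parallel forms. A multiplier argument with $\zeta=\nabla_{\nabla\rho}\tau$ upgrades the weighted bound to $\|\tau\|_{H^1(U)}\le C\|\eta\|_{L^2(U)}$. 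Finally, a Gaffney-type estimate for the cut-off tensors $\chi_\delta T$ shows $T\in H^1_0(U)$ with a uniform bound.

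\textbf{Comparison.} Your argument is shorter and extends to Lipschitz boundaries. It also shows the compatibility condition is necessary, since the range is exactly $\mathcal P(U)^\perp$. But it is nonconstructive, and it puts all the analytic weight on the Ne\v{c}as negative-norm inequality. By the same duality, that inequality is equivalent to the Euclidean case of the lemma, i.e.\ Bogovski\u{\i}'s theorem applied componentwise. The paper's proof needs only Lax--Milgram, integration by parts, a weighted Poincar\'e inequality and Gaffney's inequality. This fits its stated aim of deriving existence from elliptic theory on general Riemannian domains.
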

\begin{proof}
We seek a solution of the form $T=\rho\nabla\tau$, where $\tau$ is a one-form and $\rho$ is a function vanishing at the boundary. This leads to a weighted variational problem for $\tau$. The main point is to ensure that $T$ has zero boundary trace.

Choose $\rho\in C^2(\overline U)$ positive in $U$ and equal
to the boundary distance in a collar of $\partial U$. We use the
weighted norms
\[
\|u\|_{L^2_\rho(U)}^2 :=\int_U\rho|u|^2\,d\mu_g, \qquad \|u\|_{H^1_\rho(U)}^2 :=\int_U\rho\bigl(|u|^2+|\nabla u|^2\bigr)\,d\mu_g.
\]
By  the weighted Sobolev inequality (see, for example, \cite[Theorem 1.5]{Necas:1962}),
\[
\|u\|_{L^2(U)} \le C\|u\|_{H^1_\rho(U)}.
\]
A compactness argument on the $L^2$-orthogonal complement of $\mathcal P(U)$ then gives
\begin{equation}\label{eq:P}
\|u\|_{L^2(U)} \le C\|\nabla u\|_{L^2_\rho(U)} \qquad \text{for }u\in H^1_\rho(U)\cap\mathcal P(U)^\perp.
\end{equation}

For $\tau,\zeta\in H^1_\rho(U)\cap \mathcal P(U)^\perp$, define the bilinear form
\begin{align*}
B(\tau,\zeta):=\int_U \rho\langle\nabla\tau,\nabla\zeta\rangle\,d\mu_g.
\end{align*}
 By \eqref{eq:P} and the Lax--Milgram theorem, there is a unique $\tau\in H^1_\rho(U)\cap\mathcal P(U)^\perp$ satisfying
\begin{equation}\label{eq:bi}
\int_U\rho\langle\nabla\tau,\nabla\zeta\rangle\,d\mu_g = -\int_U\langle\eta,\zeta\rangle\,d\mu_g \qquad \text{for every }\zeta\in H^1_\rho(U),
\end{equation}
where we used the compatibility condition \eqref{eq:int} to extend the variational identity from $\mathcal P(U)^\perp$ to all test one-forms. Testing with $\zeta=\tau$ and using \eqref{eq:P} gives 
\begin{equation}\label{eq:weighted-tau}
\|\tau\|_{L^2(U)} +\|\nabla\tau\|_{L^2_\rho(U)} \le C\|\eta\|_{L^2(U)}.
\end{equation}
Define $T=\rho\nabla\tau$, with the convention $T_{ij}=\rho\nabla_j\tau_i$. Then $T\in L^2(U)$ and $\Div_gT=\eta$ weakly.

The weighted estimate~\eqref{eq:weighted-tau} does not yet give the boundary regularity for $\nabla \tau$.  Write $U_\delta=\{\rho<\delta\}$. For a fixed sufficiently small $\delta>0$, choose a $C^1$ vector field $X$ equal to $\nabla\rho$ on $U_\delta$ and vanishing outside $U_{2\delta}$. Taking $\zeta=\nabla_X\tau$ in \eqref{eq:bi} and integrating by parts gives
\begin{align*}
\int_U\langle\eta,\nabla_X\tau\rangle\,d\mu_g&=\int_U\left[\tfrac12\Div_g(\rho X)|\nabla\tau|^2-\rho(\nabla_iX^k)\langle\nabla^i\tau,\nabla_k\tau\rangle\right]\,d\mu_g \\
&\quad+\int_U \rho X^k\left\langle\nabla^i\tau,(\mathrm{Rm})^\ell{}_{jik}\tau_\ell\right\rangle\,d\mu_g.
\end{align*}
On $U_\delta$, we have $\Div_g(\rho X)=\Div_g(\rho\nabla\rho)=1+\rho\Delta\rho$.  After decreasing $\delta$ if necessary, it follows that
\begin{align*}
\tfrac12\Div_g(\rho\nabla\rho)|\nabla\tau|^2-\rho(\nabla^2\rho)(\nabla\tau,\nabla\tau)\geq\tfrac14|\nabla\tau|^2 \quad \text{on } U_\delta.
\end{align*}
 Using Cauchy--Schwarz and Young's inequality on the other integrals, and then absorbing the resulting gradient terms, we obtain
\begin{align*}
\|\nabla\tau\|_{L^2(U_\delta)}^2\leq C\|\eta\|_{L^2(U)}^2+C\|\nabla\tau\|_{L^2_\rho(U)}^2.
\end{align*}
The terms in $U\setminus U_\delta$ are controlled by \eqref{eq:weighted-tau}, since $\rho\geq\delta$ there. Together with \eqref{eq:P} and  \eqref{eq:weighted-tau}, this yields
\[
\|\tau\|_{H^1(U)}\leq C\|\eta\|_{L^2(U)}.
\]

It remains to prove that $T\in H^1_0(U)$. Rather than estimate all second derivatives of $\tau$, we use the Gaffney estimate: For a  $(0,2)$-tensor $S$ in $H^1_0(U)$,  we write $S_{i[j;k]}=S_{ij;k}-S_{ik;j}$. Integration by parts and the Ricci identity give
\begin{equation}\label{eq:gaffney}
\|S\|_{H^1(U)}
\le C\left(
\|S_{i[j;k]}\|_{L^2(U)}
+\|\Div_g S\|_{L^2(U)}
+\|S\|_{L^2(U)}
\right).
\end{equation}
Choose a smooth cutoff $\chi_\delta$ that vanishes on $U_\delta$, equals one outside $U_{2\delta}$, and satisfies $\rho|\nabla\chi_\delta|\le C$. Set
\[
T_\delta:=\chi_\delta T.
\]
Interior elliptic regularity gives $\tau\in H^2_{\mathrm{loc}}(U)$, so $T_\delta\in H^1_0(U)$. The Ricci identity, $\Div_gT=\eta$, and the cutoff bound imply
\[
\begin{split}
&\|(T_\delta)_{i[j;k]}\|_{L^2(U)} +\|\Div_gT_\delta\|_{L^2(U)} +\|T_\delta\|_{L^2(U)}\\
&\le C\bigl(\|\tau\|_{H^1(U)}+\|\eta\|_{L^2(U)}\bigr) \le C\|\eta\|_{L^2(U)}.
\end{split}
\]
Applying \eqref{eq:gaffney}, we obtain
\[
\|T_\delta\|_{H^1(U)} \le C\|\eta\|_{L^2(U)}
\]
uniformly in $\delta$.

The dominated convergence theorem gives
\begin{align*}
T_\delta \rightarrow T \qquad \text{strongly in }L^2(U).
\end{align*}
After passing to a subsequence, the uniform $H^1$ estimate also gives
\begin{align*}
T_\delta\rightharpoonup T \qquad \text{weakly in }H^1(U).
\end{align*}
Since $H^1_0(U)$ is weakly closed and $T_\delta \in H^1_0(U)$, it follows that $T\in H^1_0(U)$. In particular, $T$ has zero boundary trace. Then  \eqref{eq:energy1} follows by weak lower semicontinuity
\begin{align*}
\|\nabla T\|_{L^2(U)}\leq\liminf_{\delta\to0}\|\nabla T_\delta \|_{L^2(U)}\leq C\|\eta\|_{L^2(U)}.
\end{align*}
\end{proof}

\begin{lemma}\label{le:div2}
Let $(M,g)$ be an asymptotically flat manifold with compact boundary $\Sigma$. Then there exist a sufficiently large smooth compact domain $K\subset M$ and a constant $C>0$ such that, for every one-form $\eta\in L^2(M)$ supported in $K$, there exists a $(0,2)$-tensor $T\in H^1(M)$ satisfying
\[
  \Div_g T=\eta,   \qquad   T|_{\Sigma}=0,
\]
and
\[
  T=O(r^{1-n})  \qquad\text{as } r\to\infty.
\]
Moreover,
\[
  \|\nabla T\|_{L^2(M)}  \le C\|\eta\|_{L^2(M)}.
\]
\end{lemma}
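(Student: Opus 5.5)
Since $\eta$ need not satisfy the compatibility condition \eqref{eq:int}, Lemma~\ref{le:div} cannot be applied on $K$ directly. The plan is to first remove the obstruction with an explicit tensor defined on the asymptotic end, and then apply Lemma~\ref{le:div} to the corrected one-form on a large bounded domain.

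\textbf{Setup.} Choose $R$ large and let $K$ contain $\Sigma$ together with $\{r\le 4R\}$, so that the given $\eta$ is supported in $\{r\le R\}\cup(\text{interior region})$. Let $U:=\{r<4R\}\subset M$, a bounded domain with smooth boundary $\Sigma\cup S_{4R}$. The space $\mathcal P(U)$ of parallel one-forms is finite dimensional, with dimension at most $n$. Fix an $L^2(U)$-orthonormal basis $\omega_1,\dots,\omega_m$ of it.

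\textbf{Correction tensors.} For each $a$, I build a $(0,2)$-tensor $S_a$ with the following properties:
\begin{itemize}
\item $S_a$ is supported in $\{r\ge 2R\}$, so it vanishes near $\Sigma$ and near the support of $\eta$;
\item $S_a=O(r^{1-n})$ and $\nabla S_a\in L^2$;
\item $\sigma_a:=\Div_g S_a$ is smooth and supported in the annulus $A:=\{2R\le r\le 3R\}\subset U$;
\item the matrix $\bigl(\int_U\langle\sigma_a,\omega_b\rangle\,d\mu_g\bigr)_{a,b}$ is invertible.
\end{itemize}

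The construction is as follows. On the Euclidean end take the fundamental solution $\Gamma(x)=c_n|x|^{2-n}$ and the tensor $E^{(j)}_{ik}:=\delta_{ij}\partial_k\Gamma$. Its Euclidean divergence equals $\delta_{ij}\,\delta_0$, so away from the origin $E^{(j)}$ is divergence free. Set $S^{(j)}:=\chi E^{(j)}$, where the cutoff $\chi$ equals $0$ for $r\le 2R$ and $1$ for $r\ge 3R$. Then $\Div_{g} S^{(j)}$ equals $\nabla\chi\cdot E^{(j)}$ plus $(\Div_g-\Div_{g_{\mathbb E}})S^{(j)}$. The second term is not compactly supported but decays like $O(r^{-q-n})$, which lies in $L^2$.

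To keep the support inside $A$, I therefore handle it in one of two ways:
\begin{itemize}
\item use Lemma~\ref{le:div} iteratively on dyadic annuli; or, more simply,
\item replace $E^{(j)}$ by a $g$-divergence-free tensor on $\{r\ge 2R\}$ obtained by perturbation, solving $\Div_g W=-(\Div_g-\Div_{g_{\mathbb E}})E^{(j)}$ on the exterior with weighted elliptic theory. Take $W=\nabla Y$ with $\Delta Y$ prescribed (an isomorphism on $C^{2,\alpha}_{1-n+\epsilon}$-type weights, small for $R$ large).
\end{itemize}

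The flux $\int_U\langle\Div S^{(j)},\omega\rangle=\int_{S_{4R}}S^{(j)}(\omega,\nu)$ converges, for large $R$, to the Euclidean flux $\omega_j$ (the residue). The parallel forms are almost constant in the asymptotic coordinates, and they are determined by their values there. The pairing matrix is therefore close to the Gram matrix of $\omega_1,\dots,\omega_m$ restricted to the end, which is invertible.

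\textbf{Solving.} Pick coefficients $c_a$, linear in $\eta$ with $|c|\le C\|\eta\|_{L^2}$, such that $\eta-\sum_a c_a\sigma_a\perp\mathcal P(U)$. Lemma~\ref{le:div} on $U$ gives $T_0\in H^1_0(U)$ with $\Div_g T_0=\eta-\sum_a c_a\sigma_a$ and $\|\nabla T_0\|\le C\|\eta\|$. Extend $T_0$ by zero and set $T:=T_0+\sum_a c_aS_a$. Then $\Div_g T=\eta$, since $\Div_g S_a=\sigma_a$ everywhere. Moreover $T|_\Sigma=0$, $T=O(r^{1-n})$, and the energy bound holds because the $S_a$ have fixed finite energy.

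\textbf{Main obstacle.} The hard step is producing exact $g$-divergence-free tails on the exterior region while keeping the $r^{1-n}$ decay and the flux nondegeneracy. I would first try the perturbative weighted-space solution above, choosing $R$ large so that the correction is small.
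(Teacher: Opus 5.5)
Your argument is correct, but it differs from the paper's in how the correction tensors are built and in how the compatibility condition is met.

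\textbf{The paper's route.} It takes a pointwise orthonormal basis $\omega_{(i)}$ of the \emph{global} parallel forms $\mathcal P(M)$. It then obtains exactly $g$-divergence-free tensors $S_{(i)}=\widehat\omega_{(i)}\otimes\nu^\flat$ on the end by solving the transport equation $\nabla_\nu\widehat\omega+(\Div_g\nu)\widehat\omega=0$ along the integral curves of $\nu$. No elliptic theory is needed, and the flux normalization $\int_M\langle\Div_g\widetilde S_{(i)},\omega_{(j)}\rangle\,d\mu_g=\delta_{ij}$ is exact. Finally, $K$ is chosen so large that every parallel form on $K$ extends to $M$. This holds because restrictions between nested connected domains are injective, so the dimensions stabilize.

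\textbf{Your route.} You perturb the Euclidean tensors $\delta_{ij}\partial_k\Gamma$ into $g$-divergence-free ones on $\{r\ge2R\}$ via a weighted elliptic solve $W=\nabla Y$. You then work directly with $\mathcal P(U)$ for the actual domain. Nondegeneracy of the flux pairing comes from the near-constancy of parallel forms on the end (relative error $O(R^{-q})$) plus injectivity of evaluation at a point. This avoids the extension question for parallel forms. The cost is a quantitative perturbation argument: the smallness of the flux of $W$ requires weighted estimates on $\{r\ge2R\}$ that are uniform in $R$, which you get by rescaling to $\{|y|\ge2\}$.

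\textbf{Details to tidy.}
\begin{itemize}
\item Just take $K:=\overline U$. Nothing requires $\operatorname{supp}\eta$ to avoid $A$, and your sentence placing it in $\{r\le R\}$ conflicts with $K\supset\{r\le4R\}$.
\item $Y$ should lie at rate $r^{2-n+\epsilon}$, inside the isomorphism range. A priori this gives $W=O(r^{1-n+\epsilon})$; the sharp $O(r^{1-n})$ comes from the usual expansion of $Y$.
\item You have $n$ tensors against $m\le n$ forms. What you need is that the $n\times m$ flux matrix has rank $m$, not that a Gram matrix is invertible. Then the $m$ orthogonality conditions are solved for $c\in\mathbb R^n$ by a fixed linear map with $|c|\le C\|\eta\|_{L^2}$.
\end{itemize}
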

\begin{proof}
To apply Lemma~\ref{le:div} on a compact domain, the source term must be orthogonal to parallel one-forms. We remove this obstruction by constructing tensors whose fluxes at infinity account for the projection of $\eta$ on the space of parallel one-forms.

Let $\{\omega_{(1)},\dots,\omega_{(N)}\}$ be a pointwise orthonormal basis of the space $\mathcal P(M)$ of global parallel one-forms.
Choose $r_0$ sufficiently large that the asymptotically flat coordinates are defined outside $B_{r_0}$, and let $\nu$ denote the outward unit normal to the coordinate spheres. For each $i$, set
\[
S_{(i)}=\widehat\omega_{(i)}\otimes\nu^\flat,
\]
where $\widehat\omega_{(i)}$ solves, along the integral curves of $\nu$,
\[
\nabla_\nu\widehat\omega_{(i)} +(\Div_g\nu)\widehat\omega_{(i)}=0, \qquad \widehat\omega_{(i)}|_{\partial B_{r_0}} =\frac{\omega_{(i)}}{|\partial B_{r_0}|}.
\]
This choice gives $\Div_gS_{(i)}=0$ on the end with the asymptotics
\[
S_{(i)}=O(r^{1-n}), \qquad \nabla S_{(i)}=O(r^{-n}).
\]
Since each $\omega_{(j)}$ is parallel, the flux is independent of $r$ and, by the initial normalization,
\begin{equation}\label{eq:flux}
\int_{\partial B_r} S_{(i)}(\omega_{(j)}^\sharp,\nu)\,d\sigma =\delta_{ij}, \qquad r\ge r_0.
\end{equation}

Extend $S_{(i)}$ to a tensor $\widetilde S_{(i)}$ on $M$ that agrees with $S_{(i)}$ outside $B_{r_0}$ and vanishes near $\Sigma$. Then $\widetilde S_{(i)}\in H^1(M)$ and $\Div_g\widetilde S_{(i)}$ is compactly supported. The divergence theorem and \eqref{eq:flux} give
\begin{equation}\label{eq:pairing-S}
\int_M \langle\Div_g\widetilde S_{(i)},\omega_{(j)}\rangle\,d\mu_g =\delta_{ij}.
\end{equation}
Fix a smooth compact domain $K\supset B_{r_0}$ sufficiently large so that every parallel one-form on $K$ extends to $M$. Given $\eta$ supported in $K$, define
\[
c_i:=\int_M\langle\eta,\omega_{(i)}\rangle\,d\mu_g, \qquad \widetilde\eta :=\eta-\sum_{i=1}^N c_i\Div_g\widetilde S_{(i)}.
\]
The corrected source $\widetilde\eta$ is supported in $K$ and orthogonal to $\mathcal P(K)$ by \eqref{eq:pairing-S}. Lemma~\ref{le:div} therefore gives
$\widetilde T_K\in H^1_0(K)$ with $\Div_g\widetilde T_K=\widetilde\eta$. Its zero extension $\widetilde T$ to $M$ satisfies
\begin{equation}\label{eq:Ttilde-est}
\|\nabla\widetilde T\|_{L^2(M)}
\le C\|\widetilde\eta\|_{L^2(M)}.
\end{equation}

Set
\[
T:=\widetilde T+\sum_{i=1}^N c_i\widetilde S_{(i)}.
\]
Then $T\in H^1(M)$, $\Div_gT=\eta$, and $T|_\Sigma=0$. Outside $K$, only the flux tensors remain, so $T=O(r^{1-n})$.
Finally, Cauchy--Schwarz gives $|c|\le C\|\eta\|_{L^2(M)}$. Since the tensors $\widetilde S_{(i)}$ are fixed, \eqref{eq:Ttilde-est} yields
\[
\|\nabla T\|_{L^2(M)} \le C\bigl(\|\widetilde\eta\|_{L^2(M)}+|c|\bigr) \le C\|\eta\|_{L^2(M)}.
\]
\end{proof}

\begin{lemma}\label{le:div-b}
There exist a smooth compact domain $K\subset M$ and a constant $C>0$ such that the following holds. Let $\eta \in L^2(M)$ be supported in $K$ and let $B\in H^{1/2}$ be an ambient $(0,2)$-tensor along $\Sigma$. Then there exists a $(0,2)$-tensor $T\in H^1(M)$ satisfying $T=O(r^{1-n})$ and
\begin{align*}
	 \Div_g T &= \eta, \qquad T|_{\Sigma}=B
\end{align*}
with
\[
	\|\nabla T\|_{L^2(M)}\le C\left(\|\eta\|_{L^2(M)}+\|B\|_{H^{1/2}(\Sigma)}\right).
\]
\end{lemma}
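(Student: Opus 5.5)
The plan is to reduce Lemma~\ref{le:div-b} to Lemma~\ref{le:div2} by first extending the boundary value $B$ into a collar. By the right inverse of the trace map, $H^1(N)\to H^{1/2}(\Sigma)$ on a compact collar $N$ of $\Sigma$, there is a $(0,2)$-tensor $E\in H^1(M)$ with the following properties:
\[
E|_\Sigma=B,\qquad \operatorname{supp}E\subset N,\qquad \|E\|_{H^1(M)}\le C\|B\|_{H^{1/2}(\Sigma)}.
\]
One can take the harmonic-type extension in local coordinates and multiply it by a fixed cutoff that equals one near $\Sigma$ and vanishes outside the collar. Since $B$ is an ambient tensor along $\Sigma$ rather than a tensor on $\Sigma$, we extend it componentwise in a fixed frame near $\Sigma$, for instance a frame parallel along normal geodesics. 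This step only uses the standard trace theorem on the compact collar and gives a constant depending on $(M,g)$ and the collar.

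Next, enlarge $K$ if necessary so that it contains the collar $N$, and choose it as in Lemma~\ref{le:div2}. Set
\[
\widetilde\eta:=\eta-\Div_g E.
\]
This one-form is supported in $K$, lies in $L^2(M)$, and satisfies
\[
\|\widetilde\eta\|_{L^2}\le \|\eta\|_{L^2}+C\|\nabla E\|_{L^2}\le \|\eta\|_{L^2}+C\|B\|_{H^{1/2}(\Sigma)}.
\]
Here $|\Div_g E|\le C|\nabla E|$ pointwise. Lemma~\ref{le:div2} applied to $\widetilde\eta$ produces $\widetilde T\in H^1(M)$ with
\[
\Div_g\widetilde T=\widetilde\eta,\qquad \widetilde T|_\Sigma=0,\qquad \widetilde T=O(r^{1-n}),\qquad \|\nabla\widetilde T\|_{L^2}\le C\|\widetilde\eta\|_{L^2}.
\]

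Finally, define $T:=\widetilde T+E$. Then $\Div_g T=\eta$ and $T|_\Sigma=B$. Since $E$ is compactly supported, the decay $T=O(r^{1-n})$ is unchanged. The energy bound
\[
\|\nabla T\|_{L^2}\le \|\nabla\widetilde T\|_{L^2}+\|\nabla E\|_{L^2}\le C\bigl(\|\eta\|_{L^2}+\|B\|_{H^{1/2}(\Sigma)}\bigr)
\]
follows from the triangle inequality.

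The only mildly delicate point is the extension step. It must produce an $H^1$ tensor whose trace is exactly $B$ with the correct $H^{1/2}$ control, and whose support lies in the fixed compact set $K$, so that the hypothesis of Lemma~\ref{le:div2} is met with a uniform constant. Once $K$ is fixed to contain a collar, this is standard, so I do not expect a genuine obstacle. Uniformity in $g$ over a small $C^{2,\alpha}_{-q}$-neighborhood, if needed later, follows because each constant depends only on finitely many derivatives of $g$ on $K$ and on the constant in Lemma~\ref{le:div2}.
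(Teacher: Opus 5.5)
Your proposal is correct and follows the paper's proof: you extend $B$ by the trace right-inverse to a tensor supported in a fixed collar of $\Sigma$, enlarge $K$ to contain that collar, apply Lemma~\ref{le:div2} to $\eta-\Div_g E$, and set $T=\widetilde T+E$. Your remarks on the pointwise bound $|\Div_g E|\le C|\nabla E|$ and on the unchanged decay fill in the details the paper leaves implicit.
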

\begin{proof}
By the trace extension theorem, there is an extension $\widetilde B$ on $M$, supported in a fixed collar of $\Sigma$, with $\|\widetilde B\|_{H^1(M)}\le C\|B\|_{H^{1/2}(\Sigma)}$. After enlarging $K$ to contain this collar, Lemma~\ref{le:div2} gives $\widetilde T\in H^1(M)$ with $\widetilde T=O(r^{1-n})$ such that
\begin{align*}
	\Div_g \widetilde T &=\eta - \Div_g\widetilde B, \quad \text{and} \quad \widetilde T|_{\Sigma}=0.
\end{align*}
Then $T:=\widetilde T+\widetilde B$ is the desired solution.

\end{proof}

\begin{corollary}\label{co:div2}
There exists a constant $C>0$ such that, for every $v\in C^{2,\alpha}_{-q}(M)$, there exists a $(0,2)$-tensor $T$ such that $T-vg\in H^1(M)$, $T-vg=O(r^{1-n})$, and
\begin{align*}
	\Div_g T &= \nabla v \quad \text{and} \quad	T|_{\Sigma} =0.
\end{align*}
Moreover,
\[
	\| \nabla T \|_{L^2(M)} \le C \| \nabla v \|_{L^2(M)}.
\]
\end{corollary}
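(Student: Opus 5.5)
We look for $T$ in the form $T = vg + S$ and reduce everything to Lemma~\ref{le:div-b}. Since $\Div_g(vg)=\nabla v$, the tensor $S$ must satisfy
\[
\Div_g S = 0 \quad\text{in } M, \qquad S|_\Sigma = -v g|_\Sigma .
\]
This reduction has one defect: the energy of $vg$ is $\|\nabla(vg)\|_{L^2}=\|\nabla v\|_{L^2}$, which is acceptable, but the boundary datum $-vg|_\Sigma$ is controlled through the trace of $v$. The trace theorem bounds it in $H^{1/2}(\Sigma)$ by $\|v\|_{H^1(K')}$ on a collar $K'$. The weighted Poincar\'e inequality (Lemma~\ref{le:P}) then gives $\|v\|_{H^1(K')}\le C\|\nabla v\|_{L^2(M)}$, because $v\in C^{2,\alpha}_{-q}\subset W^{1,2}_{-q}$ when $q>\frac{n-2}{2}$.

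The plan is therefore to apply Lemma~\ref{le:div-b} with $\eta=0$, which is trivially supported in $K$, and $B=-vg|_\Sigma$. This produces $S\in H^1(M)$ with $S=O(r^{1-n})$, $\Div_g S=0$, $S|_\Sigma=-vg|_\Sigma$, and
\[
\|\nabla S\|_{L^2(M)}\le C\|vg\|_{H^{1/2}(\Sigma)}\le C\|\nabla v\|_{L^2(M)} .
\]
Setting $T:=vg+S$ gives $\Div_g T=\nabla v$ and $T|_\Sigma=0$. It also gives $T-vg=S\in H^1(M)$ with $S=O(r^{1-n})$. Finally, $\|\nabla T\|_{L^2}\le \|\nabla v\|_{L^2}+\|\nabla S\|_{L^2}\le C\|\nabla v\|_{L^2}$, since $\nabla g=0$.

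The only step needing care is the trace bound $\|v\|_{H^{1/2}(\Sigma)}\le C\|\nabla v\|_{L^2(M)}$, with no $L^2$ term on $v$ on the right-hand side. Two ingredients make it work. The first is the decay of $v$: Lemma~\ref{le:P} controls $\|r^{-1}v\|_{L^2}$, and $r^{-1}$ is bounded below on the compact collar, so $\|v\|_{L^2(K')}$ is controlled. The second is the trace theorem on the fixed collar, exactly as in the proof of Corollary~\ref{co:stable}.

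The remaining points are routine. The product $vg|_\Sigma$ is estimated in $H^{1/2}$ by $C\|v\|_{H^{1/2}}$ because $g$ is smooth up to $\Sigma$. The constant $C$ depends only on $(M,g)$, through $K$, the trace constant and the Poincar\'e constant.
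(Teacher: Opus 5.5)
Your proposal is correct and follows the paper's own proof. Both write $T = vg + \widetilde T$, with $\widetilde T$ obtained from Lemma~\ref{le:div-b} using $\eta = 0$ and $B = -vg|_\Sigma$, and both bound $\|v\|_{H^{1/2}(\Sigma)} \le C\|v\|_{H^1(K)} \le C\|\nabla v\|_{L^2(M)}$ via the trace theorem and the weighted Poincar\'e inequality. One harmless slip: $|\nabla(vg)| = \sqrt{n}\,|\nabla v|$, not $|\nabla v|$, and the extra factor is absorbed into $C$.
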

\begin{proof}
By Lemma~\ref{le:div-b}, we can solve for $\widetilde T$ with
\[
	\Div_g \widetilde T = 0 \quad \text{and} \quad \widetilde T|_{\Sigma} = -vg
\]
with the estimate
\[
	\| \nabla \widetilde T \|_{L^2(M)} \le C \| v \|_{H^{\frac12}(\Sigma)} \le C\| v \|_{H^1(K)} \le C \| \nabla v \|_{L^2 (M)}
\]
where the last inequality follows from the weighted Poincar\'e inequality and the decay of $v$. Then $T:=\widetilde T+vg$ is the desired solution.
\end{proof}

The preceding corollary provides a test tensor that recovers the energy of $v$ from the auxiliary equation without introducing boundary terms. We now use it to prove the remaining estimate in Theorem~\ref{th:dec}.

\begin{proposition}\label{pr:en}
There exists a constant $C>0$ such that, whenever $k,v\in C^{2,\alpha}_{-q}(M)$ with $q>\frac{n-2}{2}$ satisfy
\[
	\Delta_gk+L_g^*v=0,
\]
one has
\begin{align}\label{eq:est}
	\| \nabla v \|_{L^2(M)} \le C \| \nabla k \|_{L^2(M)}.
\end{align}
\end{proposition}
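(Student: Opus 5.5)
We will use Corollary~\ref{co:div2}: for the given $v$ it supplies a $(0,2)$-tensor $T$ with $\Div_g T=\nabla v$, $T|_\Sigma=0$, $T-vg\in H^1(M)$ with $T-vg=O(r^{1-n})$, and $\|\nabla T\|_{L^2(M)}\le C\|\nabla v\|_{L^2(M)}$. The idea is to pair the equation $\Delta_g k+L_g^*v=0$ against $T$. The Hessian term then produces $\|\nabla v\|^2$, and the remaining terms are controlled by $\|\nabla k\|\,\|\nabla T\|$ or are lower order.

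First, the Hessian term. Integrating by parts with $T|_\Sigma=0$ gives
\[
\int_M\langle\nabla^2 v,T\rangle\,d\mu_g=-\int_M\langle\nabla v,\Div_g T\rangle\,d\mu_g=-\|\nabla v\|_{L^2(M)}^2 .
\]
For a nonsymmetric $T$ one has to fix which index the divergence contracts, and the convention $(\Div_gT)_i=\nabla^jT_{ij}$ together with the symmetry of $\nabla^2v$ makes this consistent. The boundary term at infinity vanishes because $\nabla v=O(r^{-q-1})$ and $T=vg+O(r^{1-n})=O(r^{-q})$, so the flux is $O(r^{n-2-2q})\to0$ since $q>\frac{n-2}{2}$.

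Next, the Laplacian term. Since $T|_\Sigma=0$,
\[
\int_M\langle\Delta_g k,T\rangle\,d\mu_g=-\int_M\langle\nabla k,\nabla T\rangle\,d\mu_g ,
\]
which is bounded by $\|\nabla k\|\,\|\nabla T\|\le C\|\nabla k\|\,\|\nabla v\|$. The boundary term at infinity again decays like $r^{n-2-2q}$.

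The remaining terms in $L_g^*v$ are $-(\Delta_g v)\langle g,T\rangle$ and $-v\langle\Ric_g,T\rangle$, and these are the main obstacle. The trace term $\int_M(\Delta_gv)\tr T$ is not obviously controlled by $\|\nabla k\|$. To handle it, I would use the trace of the equation: $\tr\Delta_g k+\tr L_g^*v=0$, that is, $\Delta_g\tr k-(n-1)\Delta_gv-vR_g=0$. This expresses $\Delta_gv$ as $\frac{1}{n-1}\bigl(\Delta_g\tr k-vR_g\bigr)$, and $\int_M\Delta_g(\tr k)\,\tr T$ integrates by parts, using $T|_\Sigma=0$, to a term bounded by $\|\nabla k\|\,\|\nabla T\|$. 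An alternative would be to choose $T$ tracefree by adding a correction, but the trace identity is simpler, so I would try it first.

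The curvature terms $\int_M vR_g\tr T$ and $\int_M v\langle\Ric_g,T\rangle$ are lower order. I would bound them using $|\Ric_g|\le Cr^{-q-2}$ together with Hardy/Poincaré: $\|r^{-1}v\|\le C\|\nabla v\|$ by Lemma~\ref{le:P}, and $\|r^{-1}T\|\le C\|\nabla T\|$, which holds since $T=O(r^{-q})$. This gives $C\epsilon\|\nabla v\|^2$ plus a compactly supported term.

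To absorb the compactly supported term, I would argue by contradiction and compactness: assume $\|\nabla v_j\|=1$ and $\|\nabla k_j\|\to0$. Then $\nabla^2 v_j$ is controlled through the equation, and a limit $v$ satisfies $L_g^*v=0$ with decay, hence $v=0$ by the argument in Lemma~\ref{le:iso}. Combined with the identity
\[
\|\nabla v\|^2\le C\|\nabla k\|\,\|\nabla v\|+C\|v\|_{L^2(K)}\|\nabla v\|,
\]
this yields a contradiction, which proves \eqref{eq:est}. Uniformity in $g$ follows because every constant used depends continuously on $g$.
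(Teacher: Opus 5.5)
Your proposal is correct and is essentially the paper's argument: substituting the traced equation $\Delta_g v=\frac{1}{n-1}\bigl(\Delta_g\tr_g k-vR_g\bigr)$ is algebraically the same as the paper's replacement of $T$ by $\widehat T=T-\frac{1}{n-1}(\tr_gT)g$. After that, both arguments bound the curvature term by $\epsilon\|\nabla v\|_{L^2(M)}^2$ plus a compact term, and remove the compact term by the same contradiction--compactness argument. One point needs more careful phrasing: the limit in the compactness step is only a finite-energy ($L^{2^*}$) solution of $L_g^*v=0$, obtained via Rellich on $K$, not one with $C^{2,\alpha}_{-q}$ decay. So you need the asymptotics of static potentials in that weaker class, which is exactly what the paper invokes.
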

\begin{proof}
By Corollary~\ref{co:div2}, there is a $(0,2)$-tensor $T$ satisfying
\[
\Div_gT=\nabla v, \qquad T|_\Sigma=0, \qquad \|\nabla T\|_{L^2(M)} \le C\|\nabla v\|_{L^2(M)}.
\]
To match the principal part of $L_g^*$, set
\[
\widehat T:=T-\frac{1}{n-1}(\tr_gT)g.
\]
Then
\[
\langle T,\nabla^2v\rangle = \langle\widehat T,\nabla^2v-(\Delta_gv)g\rangle.
\]
Using $\Div_gT=\nabla v$ and $L_g^*v=-\Delta_gk$, integration by parts gives
\begin{align*}
\|\nabla v\|_{L^2(M)}^2 &=-\int_M\langle T,\nabla^2v\rangle\,d\mu_g\\
&=-\int_M\langle\nabla\widehat T,\nabla k\rangle\,d\mu_g -\int_Mv\langle\widehat T,\Ric_g\rangle\,d\mu_g.
\end{align*}
These integrations are justified by the regularity supplied by Corollary~\ref{co:div2}, the zero boundary trace, and decay at infinity.

The first term has the desired bound $C\|\nabla v\|_{L^2(M)}\|\nabla k\|_{L^2(M)}$. The curvature term is small outside a compact set: for any $\epsilon>0$, asymptotic flatness and the weighted Poincar\'e inequality give a sufficiently large compact set $K$ such that
\[
\left| \int_Mv\langle\widehat T,\Ric_g\rangle\,d\mu_g\right| \le C\|v\|_{L^2(K)}\|\nabla v\|_{L^2(M)} +\epsilon\|\nabla v\|_{L^2(M)}^2.
\]
Choosing $\epsilon$ small and absorbing the last term yields
\begin{equation}\label{eq:est2v}
\|\nabla v\|_{L^2(M)} \le C\left( \|\nabla k\|_{L^2(M)}+\|v\|_{L^2(K)} \right).
\end{equation}

It remains to remove the compact term. If \eqref{eq:est} were
false, there would be solutions $(k_j,v_j)$ with $L_g^* (v_j) = - \Delta_g k_j$ and 
\[
\|\nabla v_j\|_{L^2(M)}=1, \qquad \|\nabla k_j\|_{L^2(M)}\longrightarrow0.
\]
The Sobolev inequality and local compactness give, after passing to a subsequence, a limit $v\in L^{2^*}(M)$ such that $v_j\to v$ strongly in $L^2(K)$. By \eqref{eq:est2v}, $v\not\equiv0$. On the other hand, $\Delta_gk_j\to0$ distributionally, so passing to the limit in the equation gives $L_g^*v=0$.
The asymptotics of static potentials rule out a nonzero $L^{2^*}(M)$ solution, a contradiction.
\end{proof}

\section{Static vacuum extension and local Bartnik mass minimization}\label{se:ext}

Throughout this section, $(M,\bar g,\bar u)$ is a background asymptotically flat static vacuum triple, possibly with compact boundary $\Sigma$. When $\Sigma=\emptyset$, all boundary conditions on $\Sigma$ are understood to be vacuous. Let $\hat M\subset M$ be an exterior region containing the asymptotically flat end, with smooth compact boundary $\hat\Sigma$, and assume that $\Omega:=\overline{M\setminus\hat M}$ is connected.

To compare deformations on $\hat M$ with those on $M$, we will extend $\hat h\in T_{\bar g}\mathscr C_{\bar g}(\hat M)$ in the geodesic gauge by zero across $\hat\Sigma$. This extension need not be differentiable across $\hat \Sigma$. We first regularize it using Miao's mollification procedure~\cite[Section~3]{Miao:2002}, and then correct the resulting tensors $h_\delta$ so that
\[
h_\delta-k_\delta\in T_{\bar g}\mathscr C_{\bar g}(M).
\]
The key point is that the condition $H'_{\bar g}(\hat h)=0$ prevents a singular contribution to the linearized scalar curvature, leaving an error that tends to zero in $L^p$.

Choose Gaussian coordinates $(s,z)$ near $\hat\Sigma$, with $\bar g=ds^2+\bar\gamma(s)$,  where $s>0$ on $\hat M$ and $s<0$ on $\Omega$. Assume that $\hat h$ satisfies the geodesic gauge $\hat h(\partial_s,\cdot)=0$ along $\hat\Sigma$. Together with $\hat h^\intercal=0$, this gives $\hat h|_{\hat\Sigma}=0$. Its zero extension $h$ is therefore continuous across $\hat\Sigma$. Although its first normal derivative may jump, we have 
\[
	h\in W^{1,p}_{-q}(M) \quad \text{ for every $1\le p < \infty$}.
\]

Fix $\Lambda>8$. Choose nonnegative functions $J,\eta\in C^\infty_c(\mathbb R)$ supported in $[-1,1]$ such that
\[
\int_{\mathbb R}J(t)\,dt=1, \qquad 0\le\eta\le\frac1\Lambda, \qquad \eta=\frac1\Lambda\quad\text{on }[-1/2,1/2].
\]
Set $\eta_\delta(s):=\delta^2\eta(s/\delta)$ and define, componentwise in the Gaussian coordinates,
\[
h_\delta(s,z) := \int_{-1}^1 h\bigl(s-\eta_\delta(s)t,z\bigr)J(t)\,dt.
\]
Extend $h_\delta$ by $h$ outside the coordinate collar. Then $h_\delta=h$ outside $U_\delta:=\{|s|\le\delta\}$. The regularization is $C^2$ across $\hat\Sigma$ and retains the tangential regularity of $h$.

\begin{lemma}[Essentially {\cite[Section~3]{Miao:2002}}]
Let $\hat h\in C^{2,\alpha}_{-q}(\hat M)$ be a symmetric
$(0,2)$-tensor satisfying
\[
R'_{\bar g}(\hat h)=0, \qquad \hat h|_{\hat\Sigma}=0, \qquad H'_{\bar g}(\hat h)=0.
\]
Let $h$ be its zero extension and $h_\delta$ the regularization defined above. Then, for every $1\le p<\infty$,
\begin{equation}\label{eq:mollification-convergence}
\|h_\delta-h\|_{W^{1,p}_{-q}(M)} \longrightarrow0.
\end{equation}
Moreover, there is a constant $C$, independent of $\delta$, such that
\[
R'_{\bar g}(h_\delta)=0 \quad\text{on }M\setminus U_\delta, \qquad |R'_{\bar g}(h_\delta)|\le C \quad\text{on }U_\delta.
\]
Consequently,
\begin{equation}\label{eq:R}
\|R'_{\bar g}(h_\delta)\|_{L^p_{-q-2}(M)} \longrightarrow 0 \qquad\text{for every }1\le p<\infty.
\end{equation}
\end{lemma}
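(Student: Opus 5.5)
We prove this by direct computation in the Gaussian coordinates $(s,z)$, following Miao's argument. There are three parts: $W^{1,p}$ convergence, the pointwise statements about $R'_{\bar g}(h_\delta)$, and the weighted $L^p$ bound.

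\textbf{$W^{1,p}$ convergence.} Outside $U_\delta$ we have $\eta_\delta=0$ there, so $h_\delta=h$ and the difference is supported in the shrinking set $U_\delta$. Inside $U_\delta$, differentiating under the integral gives
\[
\partial_s h_\delta=\int(1-\eta_\delta'(s)t)\,(\partial_s h)(s-\eta_\delta t,z)\,J(t)\,dt,
\]
with $|\eta_\delta'|\le C\delta$. Tangential derivatives commute with the averaging. Since $h$ is Lipschitz with bounded first derivatives on a collar, $h_\delta$ and $\nabla h_\delta$ are bounded uniformly in $\delta$. Moreover $|h_\delta-h|\le C\delta^2$, because $h$ is Lipschitz and the shift is at most $\eta_\delta\le\delta^2/\Lambda$. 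Hence $\|h_\delta-h\|_{W^{1,p}}\le C|U_\delta|^{1/p}\to0$; the weights are irrelevant on a compact collar.

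\textbf{Pointwise behaviour of $R'_{\bar g}(h_\delta)$.} On $M\setminus U_\delta$ we have $h_\delta=h$. On $\hat M$ this gives $R'(\hat h)=0$, and on $\Omega$ it gives $h=0$; in both cases $R'_{\bar g}(h_\delta)=0$.

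Inside $U_\delta$ we write
\[
R'_{\bar g}(k)=-\Delta\tr k+\Div\Div k-\langle\Ric,k\rangle
\]
in Gaussian coordinates. Every term except those with two normal derivatives, $\partial_s^2 k_{ab}$ (from $-\Delta\tr$) and $\partial_s^2 k_{ss}$ (from $\Div\Div$), is bounded uniformly, since it involves at most one normal derivative of $h_\delta$. The second normal derivatives combine as
\[
-\bar\gamma^{ab}\partial_s^2k_{ab}-\partial_s^2k_{ss}+\partial_s^2k_{ss}=-\partial_s^2\tr_{\bar\gamma}k^\intercal,
\]
so only the tangential trace carries a second normal derivative. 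This cancellation is the structural point.

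The normal derivative of $\tr_{\bar\gamma}h^\intercal$ jumps across $\hat\Sigma$. Since $\hat h|_{\hat\Sigma}=0$, the formula for $H'$ reduces to $H'_{\bar g}(\hat h)=\tfrac12\partial_s(\tr\hat h^\intercal)$ on $\hat\Sigma$, and this vanishes by hypothesis. Therefore $f:=\tr_{\bar\gamma}h^\intercal$ is $C^1$ across $\hat\Sigma$ with $\partial_s f=0$ there, and $\partial_s f$ is Lipschitz, with $\partial_s^2 f$ bounded a.e. on each side.

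For the mollified function, $\partial_s^2 f_\delta$ consists of averages of $\partial_s^2 f$ at shifted points, which are bounded, plus terms like $\eta_\delta''\,t\,\partial_s f$ with $|\eta_\delta''|\le C$. Hence $|\partial_s^2 f_\delta|\le C$. The commutator with $\bar\gamma(s)$ arising from the trace is lower order. Altogether $|R'_{\bar g}(h_\delta)|\le C$ on $U_\delta$.

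\textbf{Weighted $L^p$ bound and the main obstacle.} The bound \eqref{eq:R} follows at once: $R'_{\bar g}(h_\delta)$ is supported in the compact set $U_\delta$ and bounded there, so its $L^p_{-q-2}$ norm is at most $C|U_\delta|^{1/p}\to0$.

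The main obstacle is the second step, in particular verifying that no other terms of $R'$ secretly carry $\partial_s^2$ of the non-tangential components $h_{sa}$. They do not: in $\Div\Div$ the components $h_{sa}$ appear only as $\partial_s\partial_a h_{sa}$, which is one normal derivative of a tangentially smooth, Lipschitz quantity and therefore bounded. The remaining care is in bounding the second derivatives of $h_\delta$ produced by $s$-dependence of $\eta_\delta$, where the scaling $\eta_\delta=\delta^2\eta(s/\delta)$ is what makes $\eta_\delta''=O(1)$.
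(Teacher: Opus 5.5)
Your proposal is correct and follows essentially the same route as the paper. Bounded first derivatives supported in the shrinking collar $U_\delta$ give both convergence statements, the $\partial_s^2 h_{ss}$ terms cancel between $-\Delta\tr$ and $\Div\Div$, and $\hat h|_{\hat\Sigma}=0$ together with $H'_{\bar g}(\hat h)=0$ removes the jump of $\bar\gamma^{ab}(0)\partial_s h_{ab}$. The one compressed step is your ``lower order'' commutator with $\bar\gamma(s)$: it is bounded not by derivative counting, but because $|\bar\gamma^{ab}(s)-\bar\gamma^{ab}(0)|=O(\epsilon_\delta)$ on the support of the $\epsilon_\delta^{-1}J(s/\epsilon_\delta)$ spike, which is exactly the paper's remark that the singular term vanishes once its coefficient is evaluated at $s=0$.
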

\begin{proof}
Since $h$ is continuous and piecewise $C^{2,\alpha}$, the first derivatives of $h_\delta$ are uniformly bounded. Together with $h_\delta=h$ outside the shrinking collar $U_\delta$, this gives \eqref{eq:mollification-convergence}.

Only the second normal derivatives can be unbounded. On $|s|\le\delta/2$, the mollification scale is constant, $\epsilon_\delta:=\delta^2/\Lambda$, and
\[
\partial_s^2h_\delta(s,z) =[\partial_sh]_0(z)\, \frac1{\epsilon_\delta} J\left(\frac{s}{\epsilon_\delta}\right) +O(1),
\]
where $[\partial_sh]_0 = \partial_s h(0^+) - \partial_s h(0^-)$ denotes the jump across $\hat\Sigma$ and $O(1)$ is uniform in $\delta$. The tangential and mixed second derivatives remain bounded, as do the normal second derivatives outside this inner collar.

In Gaussian coordinates, the only term in $R'_{\bar g}(h_\delta)$ involving two normal derivatives is
\[
-\bar\gamma^{ab}(s,z)\, \partial_s^2(h_\delta)_{ab}.
\]
Since $h=0$ on $\Omega$ and $\hat h|_{\hat\Sigma}=0$, the condition $H'_{\bar g}(\hat h)=0$ gives
\[
\bar\gamma^{ab}(0,z)[\partial_sh_{ab}]_0(z)=0.
\]
Thus the potentially singular term vanishes when its coefficient is evaluated at $s=0$. This proves the uniform bound for $R'_{\bar g}(h_\delta)$.

Outside $U_\delta$, the regularization agrees with $h$, whose linearized scalar curvature vanishes on both sides of $\hat\Sigma$. The uniform bound  therefore implies \eqref{eq:R}.
\end{proof}

\begin{theorem}\label{th:sta}
Let $\hat M\subset M$ be an exterior region and suppose that $ \Omega:=\overline{M\setminus\hat M}$ is connected. If $\bar g$ is a strictly stable critical point of $\mathcal F^M_{\bar u}$ on $\mathscr C_{\bar g}(M)$, then $\bar g$ is a strictly stable critical point of $\mathcal F^{\hat M}_{\bar u}$ on $\mathscr C_{\bar g}(\hat M)$.
\end{theorem}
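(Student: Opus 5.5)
Given $\hat h\in T_{\bar g}\mathscr C_{\bar g}(\hat M)$, we first change gauge by an element $\mathcal L_X\bar g$ with $X\in\mathcal X(\hat M)$. This puts $\hat h$ in geodesic gauge $\hat h(\partial_s,\cdot)=0$ along $\hat\Sigma$. Since $X|_{\hat\Sigma}=0$, the gauge change preserves $\hat h^\intercal=0$, the class $[\hat h]$, and (by diffeomorphism invariance) $D^2\mathcal F^{\hat M}_{\bar u}(\hat h,\hat h)$. We then have $\hat h|_{\hat\Sigma}=0$ and $H'_{\bar g}(\hat h)=0$.

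Let $h$ be the zero extension and $h_\delta$ the Miao regularization. Apply Theorem~\ref{th:dec} on $M$ to $h_\delta$, obtaining $h_\delta=\tilde h_\delta+k_\delta$ with $\tilde h_\delta\in T_{\bar g}\mathscr C_{\bar g}(M)$. Since $h_\delta=h=0$ near $\Sigma$ (because $\Sigma\subset\Omega$ lies away from $\hat\Sigma$), we have $H'_{\bar g}(h_\delta)=0$ on $\Sigma$. Then \eqref{eq:energyk} gives
\[
\|\nabla k_\delta\|_{L^2(M)}^2=\int_M v_\delta R'_{\bar g}(h_\delta)\,d\mu_{\bar g}\le \|r^{-1}v_\delta\|_{L^2}\|rR'_{\bar g}(h_\delta)\|_{L^2}\le C\|\nabla k_\delta\|_{L^2}\|R'_{\bar g}(h_\delta)\|_{L^2(U_\delta)},
\]
using Lemma~\ref{le:P} and \eqref{eq:energyv}. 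By \eqref{eq:R}, $\|\nabla k_\delta\|_{L^2}\to0$. Moreover $\tilde h_\delta^\intercal=0$ on $\Sigma$, since both $h_\delta$ and $k_\delta$ vanish there. To place $\tilde h_\delta$ in $C^{2,\alpha}_{-q}$ one needs $h_\delta\in C^{2,\alpha}$, which is only $C^2$. If necessary, approximate further, or use the $W^{1,2}$ extension of $Q$ given by Lemma~\ref{le:Q}.

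Next, compare the second variations. On $M$, Lemma~\ref{le:sec} identifies $D^2\mathcal F^M_{\bar u}(\tilde h_\delta,\tilde h_\delta)=Q^M_{\bar g,\bar u}(\tilde h_\delta,\tilde h_\delta)$. Corollary~\ref{co:stable} together with \eqref{eq:Qcont} gives $Q^M(\tilde h_\delta,\tilde h_\delta)\to Q^M(h,h)$, because $\|\nabla(\tilde h_\delta-h)\|_{L^2}\le\|\nabla(h_\delta-h)\|+\|\nabla k_\delta\|\to0$. The key identity is $Q^M(h,h)=Q^{\hat M}(\hat h,\hat h)$. To see it, use the first-order form of Lemma~\ref{le:Q}: the integrand $\mathcal P$ vanishes on $\Omega$, and the boundary integrand $\mathcal S$ on $\hat\Sigma$ vanishes since it involves only $\hat h$ and its tangential derivatives, all zero because $\hat h|_{\hat\Sigma}=0$. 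The same holds viewed from either side. Hence $Q^M(h,h)=D^2\mathcal F^{\hat M}_{\bar u}(\hat h,\hat h)$.

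Strict stability on $M$ in the form of Lemma~\ref{le:equivalent}(2) yields
\[
Q^M(\tilde h_\delta,\tilde h_\delta)\ge\lambda\|\nabla P_{\bar g}\tilde h_\delta\|^2\ge\lambda\,\|[\tilde h_\delta]\|^2_{E,M}.
\]
Passing to the limit via the continuity of $P_{\bar g}$ in energy gives $D^2\mathcal F^{\hat M}(\hat h,\hat h)\ge\lambda\|\nabla P^M_{\bar g}h\|^2$.

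The main obstacle is converting this into a bound by $\|[\hat h]\|_{E,\hat M}$: the gauge infimum on $M$ uses fields $X\in\mathcal X(M)$, which do not vanish on $\hat\Sigma$. The plan is to write $P^M h=h+\mathcal L_Y\bar g$ with $Y\in\mathcal X^{2,p}(M)$ and $\|\nabla\mathcal L_Y\bar g\|\le C\|\nabla h\|$. On $\Omega$ we have $\mathcal L_Y\bar g=P^Mh$, so $Y|_\Omega$ is nearly Killing, with $\|\mathcal L_Y\bar g\|_{H^1(\Omega)}\lesssim\|\nabla P^Mh\|$. Since $\Omega$ is connected and $Y|_\Sigma=0$ (when $\Sigma\neq\emptyset$; otherwise subtract the nearest Killing field, which extends globally only approximately, so the case needs separate care via a Korn-type inequality), a Korn inequality on $\Omega$ gives $\|Y\|_{H^{2}(\Omega)}\lesssim\|\mathcal L_Y\bar g\|_{H^1(\Omega)}$. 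Hence $Y|_{\hat\Sigma}$ is controlled by $\|\nabla P^Mh\|$ in $H^{3/2}$. Extend $Y|_{\hat\Sigma}$ into $\hat M$ as $W$ with compact support and correspondingly bounded $H^2$ norm, and set $X:=Y-W\in\mathcal X(\hat M)$. Then
\[
\|\nabla(\hat h+\mathcal L_X\bar g)\|_{L^2(\hat M)}\le\|\nabla P^Mh\|+C\|\nabla P^Mh\|,
\]
which yields $D^2\mathcal F^{\hat M}(\hat h,\hat h)\ge\lambda'\|[\hat h]\|_{E,\hat M}^2$. The connectedness of $\Omega$ is exactly what makes the Korn/rigidity step work.
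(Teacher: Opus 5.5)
Apart from one case, your outline follows the paper's proof step for step: geodesic gauge, zero extension and Miao regularization, correction by Theorem~\ref{th:dec}, the identity $Q^M_{\bar g,\bar u}(h,h)=Q^{\hat M}_{\bar g,\bar u}(\hat h,\hat h)$ from the first-order form of Lemma~\ref{le:Q}, the passage to the limit using the energy bound on $P^M_{\bar g}$, and the gauge comparison by Korn on $\Omega$ plus a trace extension (Lemmas~\ref{le:Korn} and~\ref{le:proj}).

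\emph{Variation.} You obtain $\|\nabla k_\delta\|_{L^2}\to0$ from \eqref{eq:energyk}, \eqref{eq:energyv} and the weighted Poincar\'e inequality, using that $R'_{\bar g}(h_\delta)$ is bounded and supported in $U_\delta$ and that $H'_{\bar g}(h_\delta)=0$ on $\Sigma$. The paper instead uses a $W^{2,2}_{-q}$ elliptic estimate. Your route is fine and slightly more economical, since it uses only what Theorem~\ref{th:dec} proves.

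\emph{Minor point.} Your final field $X=Y-W$ is only $W^{2,p}$, so it is not in $\mathcal X(\hat M)$. Either add a density remark, or, as the paper does, bound $\|\nabla P^{\hat M}_{\bar g}\hat h\|_{L^2}$ via properties (b) and (c) of Definition~\ref{de:gauge} and then invoke Lemma~\ref{le:equivalent}.

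\emph{The gap: the case $\Sigma=\emptyset$.} You leave this case open, saying the nearest Killing field ``extends globally only approximately.'' That diagnosis is wrong here, and the case is not peripheral: it is exactly $M=\mathbb R^n$, the main application in Theorem~\ref{Th:extension}. The missing observation is that a boundaryless asymptotically flat static vacuum triple is Euclidean.
\begin{itemize}
\item Since $\Delta_{\bar g}\bar u=0$ and $\bar u\to1$, the maximum principle forces $\bar u\equiv1$.
\item Hence $\Ric_{\bar g}=0$, and an asymptotically flat Ricci-flat manifold with one end is $\mathbb R^n$ (e.g.\ by Bishop--Gromov volume comparison).
\item Because $\Omega\subset\mathbb R^n$ is connected, every Killing field on $\Omega$ is the restriction of some $Z\in\mathcal Z$. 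Such a $Z$ is an exact global Killing field with admissible asymptotics.
\end{itemize}
Korn's inequality modulo these fields gives $\|Y-Z\|_{H^2(\Omega)}\le C\|\mathcal L_Y\bar g\|_{H^1(\Omega)}$. Your argument then goes through verbatim with $Y$ replaced by $Y-Z$: $\mathcal L_{Y-Z}\bar g=\mathcal L_Y\bar g$ on all of $M$, and $(Y-Z)|_{\hat M}-W$ vanishes on $\hat\Sigma$ with asymptotics in $\mathcal Z$. This is how Lemma~\ref{le:Korn} handles the boundaryless case.
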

\begin{proof}
We first transfer the coercive estimate on $M$ to the zero extension of a deformation on $\hat M$, and then compare the gauge projections on the two domains.

Let $\hat h\in T_{\bar g}\mathscr C_{\bar g}(\hat M)$. By diffeomorphism invariance, we may assume that $\hat h$ satisfies the geodesic gauge along  $\hat\Sigma$ (see \cite[Lemma~2.5]{An-Huang:2022}). Thus $\hat h|_{\hat\Sigma}=0$, and its zero extension $h$ to $M$ is continuous and has finite energy. Using superscripts to indicate the domain, Lemma~\ref{le:Q} gives
\begin{align} \label{eq:F}
\left.D^2\mathcal F_{\bar u}^{\hat M}\right|_{\bar g} (\hat h,\hat h)&=Q_{\bar g,\bar u}^{\hat M}(\hat h,\hat h)=Q_{\bar g,\bar u}^{M}(h,h),
\end{align}
where the boundary terms vanish because $\hat h|_{\hat\Sigma}=0$ and $h$ vanishes near $\Sigma$.

To apply strict stability on $M$, we approximate $h$ by deformations satisfying the linearized constraints. Let $h_\delta$ be the regularizations
constructed above. Proposition~\ref{pr:k} provides corrections $k_\delta$ with homogeneous boundary data such that $h_\delta-k_\delta \in T_{\bar g}\mathscr C_{\bar g}(M)$.  The weighted elliptic estimate and \eqref{eq:R} give
\[
\|\nabla k_\delta\|_{L^2(M)}\le C \|k_\delta\|_{W^{2,2}_{-q}(M)} \le C\|R'_{\bar g}(h_\delta)\|_{L^2_{-q-2}(M)} \longrightarrow0.
\]
The continuity of $Q_{\bar g,\bar u}^M$ in the energy norm~\eqref{eq:Qcont} and the gauge estimate \eqref{eq:elliptic} now allow us to
pass strict stability to the limit:
\begin{align}\label{eq:est2}
\begin{split}
Q_{\bar g,\bar u}^M(h,h) &=\lim_{\delta\to0} Q_{\bar g,\bar u}^M(h_\delta - k_\delta, h_\delta- k_\delta)\ge  \liminf_{\delta\to 0} \lambda \| \nabla \big(P^M_{\bar g} (h_\delta- k_\delta)\big) \|^2_{L^2(M)}\\
&\ge \lambda\|\nabla (P_{\bar g}^M h)\|_{L^2(M)}^2.
\end{split}
\end{align}

It remains to relate this lower bound to the gauge norm on $\hat M$. This is not automatic, since admissible diffeomorphisms on $M$ need not fix $\hat\Sigma$. Lemma~\ref{le:proj} below gives the required comparison:
\begin{equation}\label{eq:est3}
\|\nabla \big(P_{\bar g}^{\hat M}\hat h\big)\|_{L^2(\hat M)} \le C_G\|\nabla \big( P_{\bar g}^M h\big) \|_{L^2(M)},
\end{equation}
where $C_G$ is independent of $\hat h$. Combining \eqref{eq:F},  \eqref{eq:est2}, and \eqref{eq:est3}, we obtain
\[
\left.D^2\mathcal F_{\bar u}^{\hat M}\right|_{\bar g} (\hat h,\hat h) \ge
\lambda C_G^{-2} \|\nabla \big( P_{\bar g}^{\hat M}\hat h\big)\|_{L^2(\hat M)}^2.
\]
This proves strict stability on $\hat M$.
\end{proof}

We establish the gauge comparison \eqref{eq:est3} in the following two lemmas.
\begin{lemma}\label{le:Korn}
Let $M,\hat M$ be as in Theorem~\ref{th:sta}, and suppose that $\Omega:=\overline{M\setminus\hat M}$ is connected. There exists a constant $C>0$ such that, for every vector field $X\in H^2(\Omega)$ satisfying $X|_\Sigma=0$ when $\Sigma\neq\emptyset$, there is a Killing vector field $Z$ on $M$ with
\begin{equation}\label{eq:korn-core}
\|X-Z\|_{H^2(\Omega)} \le C\|\mathcal L_X\bar g\|_{H^1(\Omega)}.
\end{equation}
Here $Z=0$ when $\Sigma\neq\emptyset$. In the boundaryless case,  $(M,\bar g)$ is Euclidean, and $Z$ is a Euclidean Killing vector field.
\end{lemma}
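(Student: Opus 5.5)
This is a Korn-type inequality on the compact connected domain $\Omega$, and I would prove it in the standard two-step way: a qualitative estimate with a compact lower-order term, followed by a compactness/contradiction argument that identifies the kernel.

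\textbf{Step 1: the estimate with a compact term.} The starting point is the pointwise identity expressing second derivatives of $X$ through first derivatives of $\mathcal L_X\bar g$. With $S:=\mathcal L_X\bar g$,
\[
2\nabla_i\nabla_jX_k=\nabla_iS_{jk}+\nabla_jS_{ik}-\nabla_kS_{ij}+\mathrm{Rm}*X .
\]
This gives
\[
\|\nabla^2X\|_{L^2(\Omega)}\le C\bigl(\|S\|_{H^1(\Omega)}+\|X\|_{L^2(\Omega)}\bigr).
\]
Interpolation (Ehrling's lemma) then bounds $\|\nabla X\|_{L^2}$ by $\|\nabla^2X\|_{L^2}$ plus $\|X\|_{L^2}$. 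Altogether,
\[
\|X\|_{H^2(\Omega)}\le C\bigl(\|\mathcal L_X\bar g\|_{H^1(\Omega)}+\|X\|_{L^2(\Omega)}\bigr).
\]
Because $X\in H^2$ and $\Omega$ is a bounded smooth domain, the identity holds in $L^2$ with no boundary terms needed.

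\textbf{Step 2: the kernel.} Let $\mathcal K$ be the set of $X\in H^2(\Omega)$ with $\mathcal L_X\bar g=0$ on $\Omega$ and $X|_\Sigma=0$ if $\Sigma\ne\emptyset$. Such $X$ is Killing on $\Omega$, hence smooth. A Killing field on a connected manifold is determined by its $1$-jet at one point, since the pair $(X,\nabla X)$ satisfies a linear ODE along curves. Two cases arise.

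If $\Sigma\neq\emptyset$: at a point of $\Sigma$ we have $X=0$, so all tangential derivatives of $X$ vanish and $\nabla X$ is determined by its normal derivative. Skew-symmetry of $\nabla X$ forces $\langle\nabla_\nu X,\nu\rangle=0$ and $\langle\nabla_\nu X,e_a\rangle=-\langle\nabla_a X,\nu\rangle=0$. Hence the $1$-jet vanishes, and $X\equiv 0$ on the connected set $\Omega$. So $\mathcal K=\{0\}$ and $Z=0$.

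If $\Sigma=\emptyset$: $\Omega$ is compact with $\partial\Omega=\hat\Sigma$, so $M$ has no boundary. As stated, in the boundaryless case $(M,\bar g)$ is Euclidean, which I take as given from the static vacuum rigidity of a boundaryless asymptotically flat static manifold. Killing fields on the connected $\Omega\subset\mathbb R^n$ then extend uniquely to Euclidean Killing fields on $M$, so $\mathcal K$ is the finite-dimensional space $\mathcal Z$ restricted to $\Omega$.

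\textbf{Step 3: contradiction.} It suffices to prove
\[
\|X\|_{H^2(\Omega)}\le C\|\mathcal L_X\bar g\|_{H^1(\Omega)}
\quad\text{for } X\perp_{L^2(\Omega)}\mathcal K;
\]
in general one then applies this to $X-Z$, where $Z$ is the $L^2$-projection of $X$ onto $\mathcal K$. Suppose the inequality fails. Then there are $X_j\perp\mathcal K$ with $\|X_j\|_{H^2}=1$ and $\|\mathcal L_{X_j}\bar g\|_{H^1}\to0$. By Rellich, a subsequence satisfies $X_j\to X$ in $H^1$. Step 1 applied to $X_j-X_l$ shows the sequence is Cauchy in $H^2$, so $X_j\to X$ in $H^2$ with $\|X\|_{H^2}=1$. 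The limit satisfies $\mathcal L_X\bar g=0$, it keeps the trace condition $X|_\Sigma=0$ by continuity of the trace, and $X\perp\mathcal K$. Hence $X\in\mathcal K\cap\mathcal K^\perp=\{0\}$, which is a contradiction.

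The main obstacle is Step 2 in the case $\Sigma\neq\emptyset$: showing that a Killing field vanishing on $\Sigma$ vanishes on all of $\Omega$. Connectedness of $\Omega$ is exactly what allows the $1$-jet argument to propagate from $\Sigma$ to the whole domain, which is why that hypothesis appears in the lemma.
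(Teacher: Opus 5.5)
Your proof is correct, and it differs from the paper's in one genuine respect.

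The paper's route is:
\begin{enumerate}
\item Invoke Korn's inequality (Chen--Jost) to get $\|V\|_{H^1(\Omega)}\le C\bigl(\|V\|_{L^2(\Omega)}+\|\mathcal L_V\bar g\|_{L^2(\Omega)}\bigr)$ for $V=X-Z$.
\item Remove the $L^2$ term by a ``standard compactness argument'' that is not written out.
\item Only then use the second-derivative identity for the Killing operator to upgrade to $H^2$.
\end{enumerate}

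You bypass the first-order Korn inequality entirely. Because the lemma's right-hand side already contains $\|\mathcal L_X\bar g\|_{H^1(\Omega)}$, the identity $2\nabla_i\nabla_jX_k=\nabla_iS_{jk}+\nabla_jS_{ik}-\nabla_kS_{ij}+\mathrm{Rm}*X$ together with interpolation gives the $H^2$ estimate with a compact $L^2$ remainder directly. You then run the compactness argument at the $H^2$ level.

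You also make explicit the kernel identification that the paper's compactness step tacitly relies on. A Killing field vanishing on $\Sigma$ has vanishing $1$-jet there by skew-symmetry of $\nabla X$, so it vanishes on the connected set $\Omega$. In the boundaryless case the kernel is $\mathcal Z|_\Omega$. Since your kernel consists of $H^2$ fields rather than $H^1$ fields, the regularity bootstrap to smoothness is immediate.

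The paper's route yields a stronger intermediate result, the genuine first-order Korn inequality $\|X-Z\|_{H^1(\Omega)}\le C\|\mathcal L_X\bar g\|_{L^2(\Omega)}$, at the cost of citing a nontrivial theorem. Your route is self-contained and elementary. It gives exactly the $H^2$ estimate that Lemma~\ref{le:proj} uses, and nothing more.
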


\begin{proof}
Set $Z=0$ when $\Sigma\neq\emptyset$; otherwise, let $Z$ be the $L^2(\Omega)$-orthogonal projection of $X$ onto the restrictions of Euclidean Killing fields. Write $V=X-Z$.

By Korn's inequality~\cite[Theorem~1.2]{Chen-Jost:2002}, 
\begin{equation}\label{eq:korn-inhomogeneous}
\|V\|_{H^1(\Omega)} \le C\left( \|V\|_{L^2(\Omega)} +\|\mathcal L_V\bar g\|_{L^2(\Omega)} \right).
\end{equation}
A standard compactness argument therefore removes the $L^2$-term in \eqref{eq:korn-inhomogeneous}, giving
\begin{equation}\label{eq:korn-H1}
\|V\|_{H^1(\Omega)} \le C\|\mathcal L_V\bar g\|_{L^2(\Omega)}.
\end{equation}

The identity for second covariant derivatives of a vector field in terms of its Killing operator gives
\[
\|\nabla^2V\|_{L^2(\Omega)} \le C\left( \|\nabla(\mathcal L_V\bar g)\|_{L^2(\Omega)} +\|V\|_{L^2(\Omega)} \right).
\]
Combining this with \eqref{eq:korn-H1} and $\mathcal L_V\bar g=\mathcal L_X\bar g$ proves \eqref{eq:korn-core}.
\end{proof}

The preceding estimate will allow us to modify a global gauge field so that it vanishes on $\hat\Sigma$.
\begin{lemma}\label{le:proj}
Let $M,\hat M$ be as in Lemma~\ref{le:Korn}. There exists a constant $C_G>0$, depending only on $(M,\hat M,\bar g)$ and the two gauge projections, such that
\[
\|\nabla \big( P_{\bar g}^{\hat M}\hat h\big) \|_{L^2(\hat M)} \le C_G\|\nabla \big( P_{\bar g}^{M}h\big)\|_{L^2(M)}
\]
for every symmetric $(0,2)$-tensor $\hat h\in C^{2,\alpha}_{-q}(\hat M)$ with $\hat h|_{\hat \Sigma}=0$ and the corresponding zero extension $h$ on $M$.
\end{lemma}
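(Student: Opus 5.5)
Write $P^M_{\bar g}h=h+\mathcal L_X\bar g$ with $X\in\mathcal X^{2,p}(M)$, as provided by property (a) of Definition~\ref{de:gauge}. The plan is to turn $X$ into an admissible gauge field $\hat X\in\mathcal X(\hat M)$, meaning one that vanishes on $\hat\Sigma$, whose cost is controlled by $\|\nabla(P^M_{\bar g}h)\|_{L^2(M)}$. Then we apply properties (b) and (c) on $\hat M$:
\[
\|\nabla P^{\hat M}_{\bar g}\hat h\|_{L^2(\hat M)}=\|\nabla P^{\hat M}_{\bar g}(\hat h+\mathcal L_{\hat X}\bar g)\|_{L^2(\hat M)}\le C\|\nabla(\hat h+\mathcal L_{\hat X}\bar g)\|_{L^2(\hat M)}.
\]
So it suffices to build $\hat X$ with $\|\nabla(\hat h+\mathcal L_{\hat X}\bar g)\|_{L^2(\hat M)}\le C\|\nabla(P^M_{\bar g}h)\|_{L^2(M)}$.

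The key observation is that $h=0$ on $\Omega$. Hence on $\Omega$ we have $\mathcal L_X\bar g=P^M_{\bar g}h$, and therefore $\|\mathcal L_X\bar g\|_{H^1(\Omega)}\le C\|\nabla P^M_{\bar g}h\|_{L^2(M)}$. The $L^2$ part of the $H^1(\Omega)$ norm is controlled by the weighted Poincar\'e inequality (Lemma~\ref{le:P}), since $P^M_{\bar g}h\in W^{1,2}_{-q}$ and $\Omega$ is compact. We apply Lemma~\ref{le:Korn} to $X|_\Omega$, which uses the connectedness of $\Omega$ and $X|_\Sigma=0$. This gives a Killing field $Z$ on $M$ with $\|X-Z\|_{H^2(\Omega)}\le C\|\nabla P^M_{\bar g}h\|_{L^2(M)}$. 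Here $Z=0$ if $\Sigma\ne\emptyset$, and $Z$ is a Euclidean Killing field otherwise.

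Set $V:=X-Z$ on $M$. Since $\mathcal L_Z\bar g=0$, we have $P^M_{\bar g}h=h+\mathcal L_V\bar g$. Moreover $V-Z'\in W^{2,p}_{1-q}$ for a Euclidean Killing field $Z'$, so $V$ still has admissible asymptotics. Next, by the Sobolev trace theorem, $\|V\|_{H^{3/2}(\hat\Sigma)}\le C\|V\|_{H^2(\Omega)}$. Take a bounded extension operator from $H^{3/2}(\hat\Sigma)$ to $H^2$ vector fields $W$ supported in a fixed compact collar of $\hat\Sigma$ in $\hat M$, with $W|_{\hat\Sigma}=V|_{\hat\Sigma}$ and $\|W\|_{H^2(\hat M)}\le C\|V\|_{H^2(\Omega)}$. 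Define $\hat X:=V-W$ on $\hat M$. Then $\hat X|_{\hat\Sigma}=0$ and $\hat X$ has the asymptotics of $\mathcal X^{2,p}(\hat M)$ for $p=2$. We also get
\[
\|\nabla(\hat h+\mathcal L_{\hat X}\bar g)\|_{L^2(\hat M)}\le\|\nabla P^M_{\bar g}h\|_{L^2(\hat M)}+\|\nabla\mathcal L_W\bar g\|_{L^2}\le C\|\nabla P^M_{\bar g}h\|_{L^2(M)},
\]
since $\|\nabla\mathcal L_W\bar g\|_{L^2}\le C\|W\|_{H^2}$. Combined with the first display, this gives the lemma.

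The main obstacle I expect is regularity bookkeeping for the gauge fields. Property (b) is stated for $\mathcal X^{2,p}$, while $\hat X$ is only $H^2$ near $\hat\Sigma$ (with $p=2$). One must also check that property (b) applies to it; if necessary, approximate $\hat X$ by smooth admissible fields and use property (c) to pass to the limit. Alternatively, since $\hat h\in C^{2,\alpha}$, one can choose $X\in\mathcal X(M)$ via property (a) and use a $C^{3,\alpha}$ extension. A secondary point is that the Korn step needs $X\in H^2(\Omega)$, which holds because $X\in W^{2,p}_{\mathrm{loc}}$ for $p\ge2$.
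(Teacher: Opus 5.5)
Your proposal is correct and follows the paper's proof essentially step for step: on $\Omega$ you identify $P^M_{\bar g}h=\mathcal L_X\bar g$, apply Lemma~\ref{le:Korn} with the weighted Poincar\'e inequality to bound $\|X-Z\|_{H^2(\Omega)}$, cancel the trace of $X-Z$ on $\hat\Sigma$ with an $H^2$ extension $W$ supported in a collar, and conclude from gauge invariance and \eqref{eq:elliptic} on $\hat M$. The only slip is in your bookkeeping aside: the second clause of property (a) cannot be used, since the zero extension $h$ is merely Lipschitz across $\hat\Sigma$ and hence not in $C^{2,\alpha}_{-q}(M)$; your other remedy (working in $\mathcal X^{2,p}(\hat M)$, or approximating and passing to the limit with \eqref{eq:elliptic}) is the right one.
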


\begin{proof}
By Definition~\ref{de:gauge},  write
\begin{equation}\label{eq:v-global-gauge}
h_0:=P_{\bar g}^{M}h=h+\mathcal L_X\bar g, \qquad X\in\mathcal X^{2,2}(M).
\end{equation}
Although $X$ need not vanish on $\hat\Sigma$, the identity $h=0$ on $\Omega$ gives
\[
h_0=\mathcal L_X\bar g \qquad\text{on }\Omega.
\]
Thus Lemma~\ref{le:Korn}  provides a global Killing field $Z$ such that
\[
\|X-Z\|_{H^2(\Omega)} \le C\|h_0\|_{H^1(\Omega)} \le C\|\nabla h_0\|_{L^2(M)}
\]
where we use the weighted Poincar\'e inequality in the last inequality. 

The trace theorem and a bounded extension operator give a vector field $W$ on $\hat M$, supported in a fixed collar of $\hat\Sigma$, with
\[
W|_{\hat\Sigma}=(X-Z)|_{\hat\Sigma}
\]
and
\begin{equation}\label{eq:W-estimate}
\|W\|_{H^2(\hat M)} \le C\|(X-Z)|_{\hat\Sigma}\|_{H^{3/2}(\hat\Sigma)} \le C\|\nabla h_0\|_{L^2(M)}.
\end{equation}

The field
\[
Y:=(X-Z)|_{\hat M}-W
\]
vanishes on $\hat\Sigma$ and has the admissible asymptotics, so $Y\in\mathcal X(\hat M)$. Since $Z$ is globally Killing, \eqref{eq:v-global-gauge} gives
\[
\hat h+\mathcal L_Y\bar g =h_0|_{\hat M}-\mathcal L_W\bar g \quad \mbox{ on } \hat M.
\]
The gauge projection is unchanged by adding $\mathcal L_Y\bar g$. Hence \eqref{eq:elliptic} and \eqref{eq:W-estimate} yield
\begin{align*}
\|\nabla (P_{\bar g}^{\hat M}\hat h)\|_{L^2(\hat M)} &\le C\|\nabla(h_0|_{\hat M}-\mathcal L_W\bar g)\|_{L^2(\hat M)}\\
&\le C\left( \|\nabla h_0\|_{L^2(M)}+\|W\|_{H^2(\hat M)} \right)\\
&\le C_G\|\nabla h_0\|_{L^2(M)}.
\end{align*}
This is the desired estimate.
\end{proof}

We now use strict stability to verify the nondegeneracy condition in the local static vacuum extension theorem in our previous work~\cite{An-Huang:2024}. We recall this theorem on a general exterior region $N$ before applying it to $\hat M$.

Define
\[
\mathcal T(g,u) :=\left( -u\Ric_g+\nabla_g^2u,\, \Delta_gu,\, g^\intercal,\, H_g \right).
\]
The first two components are defined on $N$, while the last two are defined on $\partial N$. Thus a static vacuum extension of the Bartnik boundary data
$(\tau,\phi)$ satisfies
\[
\mathcal T(g,u)=(0,0,\tau,\phi).
\]
At a static vacuum pair $(\bar g,\bar u)$, the linearization is 
\begin{equation}\label{eq:lin-sta}
\left.D\mathcal T\right|_{(\bar g,\bar u)}(h,v) = \left( \begin{array}{c} -\bar u\,\Ric'_{\bar g}(h) +(\nabla^2)'_{\bar g}(h)\bar u -v\,\Ric_{\bar g} +\nabla_{\bar g}^2v
\\[1mm]
\Delta_{\bar g}v+\Delta'_{\bar g}(h)\bar u
\\[1mm]
h^\intercal
\\[1mm]
H'_{\bar g}(h)
\end{array}
\right).
\end{equation}
Diffeomorphism invariance implies that its kernel contains the pure-gauge variations
\[
\bigl(\mathcal L_X\bar g,X(\bar u)\bigr), \qquad X\in\mathcal X(N).
\]
The following theorem reduces local existence to showing that these are the only kernel elements. This theorem is the only place where we need to assume that the boundary $\partial N$ is connected.

\begin{theorem}[{\cite[Theorem~3]{An-Huang:2024}}]
\label{thm:existence}
Let $(N,\bar g,\bar u)$ be an asymptotically flat static vacuum triple with connected compact boundary $\partial N$. Suppose that $\bar u>0$ and
\[
\ker\left.D\mathcal T\right|_{(\bar g,\bar u)} =\left\{
\bigl(\mathcal L_X\bar g,X(\bar u)\bigr):
X\in\mathcal X(N)
\right\}.
\]
Then there exist constants $\epsilon,C>0$ such that, for every pair of 
boundary data $(\tau,\phi)$ satisfying
\[
\|\tau-\bar g^\intercal\|_{C^{2,\alpha}(\partial N)}
+
\|\phi-H_{\bar g}\|_{C^{1,\alpha}(\partial N)}
<\epsilon,
\]
there is an asymptotically flat static vacuum pair $(g,u)$ with
\[
(g^\intercal,H_g)=(\tau,\phi)
\quad\text{on }\partial N
\]
and
\[
\begin{split}
&\|g-\bar g\|_{C^{2,\alpha}_{-q}(N)}
+\|u-\bar u\|_{C^{2,\alpha}_{-q}(N)}\\
&\qquad\le
C\left(
\|\tau-\bar g^\intercal\|_{C^{2,\alpha}(\partial N)}
+
\|\phi-H_{\bar g}\|_{C^{1,\alpha}(\partial N)}
\right).
\end{split}
\]
In an elliptic gauge, the solution depends smoothly on the
boundary data. For fixed boundary data, it is locally unique
up to the action of $\mathscr D(N)$.
\end{theorem}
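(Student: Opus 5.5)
The plan is to reduce the statement to the inverse/implicit function theorem for a gauge-fixed version of $\mathcal T$. Since $\mathcal T$ is degenerate along the pure-gauge directions $(\mathcal L_X\bar g,X(\bar u))$, I would first add a gauge term. Concretely, I would add to the first component a term of the form $\bar u\,\delta^*_{g}\bigl(\beta_{\bar g}g\bigr)$, or a comparable Bianchi-type expression adapted to $\bar u$. I would also add gauge boundary conditions on $\partial N$: the vanishing of the gauge one-form $\beta_{\bar g}g$, or of its tangential part together with a suitable condition on $u$. Call the result $\mathcal T^G$. The system is chosen so that the principal part of the interior equation becomes $-\frac12\bar u\,\Delta h+\nabla^2v$ coupled with $\Delta v$. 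This is a square, second-order elliptic system. With Dirichlet data $h^\intercal$, the mean-curvature condition and the gauge conditions, it should satisfy the complementing (Lopatinski--Shapiro) condition. I would verify this exactly as in Lemma~\ref{le:elliptic}, by examining decaying ODE solutions after a tangential Fourier transform. Weighted Fredholm theory then makes $D\mathcal T^G|_{(\bar g,\bar u)}:C^{2,\alpha}_{-q}\to$ (weighted targets) Fredholm.

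Next, I would show that $D\mathcal T^G$ is an isomorphism. For the kernel, let $(h,v)$ lie in $\ker D\mathcal T^G$. Apply the linearized Bianchi identity (divergence of the static equation, using $\nabla^2\bar u=\bar u\Ric_{\bar g}$) to obtain a homogeneous elliptic equation $\bar u\,\Delta Y+\dots=0$ for the gauge vector $Y=\beta_{\bar g}h$, with the boundary conditions inherited from the gauge conditions. This is where $\bar u>0$ enters: an energy or maximum-principle argument then gives $Y\equiv0$. Hence $(h,v)\in\ker D\mathcal T$, and by hypothesis $(h,v)=(\mathcal L_X\bar g,X(\bar u))$ for some $X\in\mathcal X(N)$. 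The gauge conditions then say that $X$ solves $\beta_{\bar g}\mathcal L_X\bar g=0$ with $X|_{\partial N}=0$ and admissible asymptotics. This forces $X=0$, or at most a Killing field, which is removed by an orthogonality normalization as in Example~\ref{ex:ho}.

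For the cokernel, the plan is to show that the index is zero. One option is a homotopy to a model operator obtained by decoupling $v$, for which the problem splits into Dirichlet/Robin Laplace problems that are manifestly invertible. Alternatively, I would identify the formal adjoint problem via the integration-by-parts formulas \eqref{eq:ibp2} and run the same kernel argument on it, as in the cokernel step of Lemma~\ref{le:iso}.

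With $D\mathcal T^G$ an isomorphism, the inverse function theorem gives, for boundary data $(\tau,\phi)$ within $\epsilon$ of $(\bar g^\intercal,H_{\bar g})$, a unique nearby solution of $\mathcal T^G(g,u)=(0,0,\tau,\phi)$. This solution depends smoothly on $(\tau,\phi)$ and satisfies the stated Lipschitz bound. It remains to show that the gauge term vanishes for the nonlinear solution. Applying the contracted Bianchi identity to $\mathcal T^G(g,u)=0$ yields an elliptic system for the one-form $\beta_{\bar g}g$ with zero boundary data. At $(g,u)$ close to $(\bar g,\bar u)$, this system is a small perturbation of the linear one already shown to be injective, so $\beta_{\bar g}g=0$ and $(g,u)$ is genuinely static vacuum.

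For local uniqueness up to $\mathscr D(N)$, take any nearby static vacuum solution with the same data. First put it into the gauge by solving the harmonic-type gauge equation for a diffeomorphism fixing $\partial N$; this is the slice property of Definition~\ref{de:gauge}. It then solves $\mathcal T^G=0$, and uniqueness in the inverse function theorem applies. The main obstacle I expect is the boundary analysis for the gauge vector, namely showing that $\beta_{\bar g}h$ (and its nonlinear analogue) vanishes from the boundary conditions. This is where the connectedness of $\partial N$ and the positivity of $\bar u$ should be used, since a Robin-type boundary condition for $Y$ coupled to $v$ must be shown to have no nontrivial solutions. I would first try an energy identity weighted by $\bar u$. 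If sign issues arise, I would instead fall back on choosing the boundary gauge conditions so that $Y$ satisfies a pure Dirichlet problem.
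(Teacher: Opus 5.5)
\textbf{There is a genuine gap.} The paper gives no proof of this statement; it imports it from \cite{An-Huang:2024}, where the argument is also an elliptic gauge-fixing followed by an implicit-function argument. The gap in your outline is the step where you conclude that $D\mathcal T^G$ at $(\bar g,\bar u)$ is an isomorphism.

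After killing the gauge one-form and invoking the hypothesis, you are left with $X\in\mathcal X(N)$ satisfying $X|_{\partial N}=0$ and $\beta_{\bar g}\mathcal L_X\bar g=0$ (or its $\bar u$-weighted analogue). You assert that this forces $X=0$ or a Killing field, but it does not.
\begin{itemize}
\item The space $\mathcal X(N)$ contains fields asymptotic to an arbitrary $Z\in\mathcal Z$. By the same weighted energy argument you use for $Y$, each such $Z$ yields a unique solution $X$ with $X-Z\in C^{3,\alpha}_{1-q}$.
\item A Killing field vanishing on a hypersurface vanishes identically, so none of these $X$ is Killing. Hence each gives a nonzero decaying element $(\mathcal L_X\bar g,X(\bar u))$ of $\ker D\mathcal T^G$.
\item For the Bianchi gauge this is exactly the space $\mathcal K_{\bar g}$ of Example~\ref{ex:ho}, and it has dimension $\dim\mathcal Z$.
\end{itemize}
An orthogonality normalization on the domain removes this kernel, but it only restores injectivity. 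If the index is zero, as your homotopy is meant to show, the cokernel also has dimension $\dim\mathcal Z$.

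You can see the cokernel directly in the Euclidean model ($\bar u\equiv 1$) with Dirichlet gauge data. Suppose $D\mathcal T^G(h,v)=(0,0,0,0,c)$.
\begin{itemize}
\item Applying $\beta$ to the first equation shows that $Y=\beta h$ is harmonic with $Y|_{\partial N}=c$.
\item The equations and the decay give $v=\mu r^{2-n}+O(r^{1-n})$ and $h=a\,r^{2-n}+\frac{\mu}{n}r^2\nabla^2 r^{2-n}+O(r^{1-n})$, with $a$ a constant matrix.
\item Hence $Y$ has no $r^{2-n}$ term, and its $r^{1-n}$ term is $D_{jk}\partial_k r^{2-n}$ with $D$ symmetric.
\item Apply Green's formula against $X\in\mathcal K_{\bar g}$. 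The flux at infinity vanishes for translations because there is no monopole term. It vanishes for rotations $Z=Ax$ because $\tr(DA)=0$.
\item Therefore $\int_{\partial N}\langle c,\nabla_\nu X\rangle\,d\sigma=0$ for all $X\in\mathcal K_{\bar g}$. By unique continuation these are $\dim\mathcal Z$ independent constraints on the gauge boundary data.
\end{itemize}

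So, at least in this gauge, the linearization is never onto, and the inverse function theorem cannot be applied in the form you propose. What is missing is a way to handle these rigid motions at infinity. One option is a Lyapunov--Schmidt step:
\begin{itemize}
\item solve the gauge-fixed equation modulo a finite-dimensional complement of the range;
\item then use the nonlinear contracted Bianchi identity, via the same flux identities paired with $\mathcal K_{\bar g}$, to show that the complementary component vanishes for data of the form $(0,0,\tau,\phi,0)$.
\end{itemize}
Another option is a gauge that fixes these directions in both the domain and the target.

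The same finite-dimensional freedom must be tracked in your uniqueness step. A gauge-fixed representative of a $\mathscr D(N)$-orbit is determined only modulo $\mathcal K_{\bar g}$.

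Finally, nothing in your sketch actually uses the connectedness of $\partial N$, which the statement assumes. You only speculate about where it might enter.
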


The link with strict stability is that every kernel element
determines a null direction of the constrained Hessian.

\begin{lemma}\label{lem:nondeg}
Let $(N,\bar g,\bar u)$ be an asymptotically flat static vacuum
triple with $\bar u>0$. If $(h,v)\in
\ker\left.D\mathcal T\right|_{(\bar g,\bar u)}$,  then $h\in T_{\bar g}\mathscr C_{\bar g}(N)$ and $\left.D^2\mathcal F_{\bar u}\right|_{\bar g}(h,h)=0$. 
\end{lemma}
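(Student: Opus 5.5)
First I verify that $h$ lies in the constraint tangent space. The third and fourth components of the kernel equation give $h^\intercal=0$ and $H'_{\bar g}(h)=0$ on $\partial N$. For the interior constraint, I take the trace of the first component. Since $\tr_{\bar g}\Ric'_{\bar g}(h)=R'_{\bar g}(h)+\langle h,\Ric_{\bar g}\rangle$, since $\tr_{\bar g}\bigl((\nabla^2)'_{\bar g}(h)\bar u\bigr)=\Delta'_{\bar g}(h)\bar u+\langle h,\nabla^2\bar u\rangle$, and since $\bar u\Ric_{\bar g}=\nabla^2\bar u$, $R_{\bar g}=0$, the trace reads
\[
-\bar u R'_{\bar g}(h)+\bigl(\Delta_{\bar g}v+\Delta'_{\bar g}(h)\bar u\bigr)=0 .
\]
The bracketed term is the second component of $D\mathcal T$, which vanishes. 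Because $\bar u>0$, this gives $R'_{\bar g}(h)=0$. Hence $h\in T_{\bar g}\mathscr C_{\bar g}(N)$. This trace identity is the step that needs the most care, but it is a routine check of the formulas already recorded.

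Next I rewrite the Hessian. By Lemma~\ref{le:sec},
\[
D^2\mathcal F_{\bar u}|_{\bar g}(h,h)=\int_N\Big\langle \bar u\Ric'(h)-(\nabla^2)'(h)\bar u+\bigl(\Delta'(h)\bar u-\tfrac12\bar uR'(h)\bigr)\bar g,\ h\Big\rangle\,d\mu .
\]
The first component of the kernel equation says $\bar u\Ric'(h)-(\nabla^2)'(h)\bar u=\nabla^2v-v\Ric_{\bar g}$. The second component gives $\Delta'(h)\bar u=-\Delta v$, and we have just shown $R'(h)=0$. The integrand therefore becomes $\langle \nabla^2v-(\Delta v)\bar g-v\Ric_{\bar g},h\rangle=\langle L^*_{\bar g}v,h\rangle$.

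Finally I integrate by parts using \eqref{eq:ibp2}:
\[
\int_N\langle h,L^*v\rangle=\int_N vR'(h)-\int_{\partial N}\bigl[\langle vA-\nu(v)\gamma,h^\intercal\rangle+2vH'(h)\bigr].
\]
Every term on the right vanishes, because $R'(h)=0$, $h^\intercal=0$ and $H'(h)=0$. The boundary term at infinity also vanishes. Indeed, $v$ decays at rate $q$ and the kernel element lies in $C^{2,\alpha}_{-q}$, so the flux integrand is $O(r^{-2q-1})$ with $2q>n-2$. (Alternatively, I apply Lemma~\ref{le:Q} in its weak form, which uses only $h\in W^{1,2}_{-q}$.) Therefore $D^2\mathcal F_{\bar u}|_{\bar g}(h,h)=0$.
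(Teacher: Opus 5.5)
Your proposal is correct and follows the paper's proof step by step. You take the trace of the first linearized equation and use the second to get $\bar u R'_{\bar g}(h)=0$, then substitute the linearized static equations into the Hessian from Lemma~\ref{le:sec} to reduce it to $\int_N\langle L^*_{\bar g}v,h\rangle$, and conclude via \eqref{eq:ibp2}. You are simply more explicit than the paper about the trace identities and about the vanishing of the flux at infinity, which uses $2q>n-2$.
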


\begin{proof}
Taking the trace of the first equation in \eqref{eq:lin-sta}
and using the second gives $\bar uR'_{\bar g}(h)=0$.
Since $\bar u>0$, the linearized scalar curvature vanishes.
Together with the boundary equations, this shows that
$h\in T_{\bar g}\mathscr C_{\bar g}(N)$.

Substituting the linearized static equations into the second
variation formula and integrating by parts, we obtain
\begin{align*}
D^2\mathcal F_{\bar u}\big|_{\bar g}(h,h)&=\int_N \Big\langle \bar u\,\operatorname{Ric}'_{\bar g}(h) -(\nabla^2)'_{\bar g}(h)\bar u +\bigl(\Delta'_{\bar g}(h)\bar u\bigr)\bar g, h \Big\rangle_{\bar g} \,d\mu_{\bar g} \\
&= \int_N \Big\langle -v\operatorname{Ric}_{\bar g} +\nabla_{\bar g}^2v -(\Delta_{\bar g}v)\bar g, h \Big\rangle_{\bar g} \,d\mu_{\bar g} \\
&= \int_N \left\langle L_{\bar g}^*v,h \right\rangle_{\bar g} \,d\mu_{\bar g}=\int_N v\,R'_{\bar g}(h)\,d\mu_{\bar g}=0. 
\end{align*}
 The boundary terms vanish by $h^\intercal=0$,
$H'_{\bar g}(h)=0$, and the prescribed decay;
see \eqref{eq:ibp2}.
\end{proof}
\begin{proof}[Proof of Theorem~\ref{Th:extension}]
By Theorem~\ref{th:sta}, strict stability on $M$ passes to
$\hat M$. It remains to verify the kernel condition in
Theorem~\ref{thm:existence}.

Let $(h,v)\in
\ker\left.D\mathcal T\right|_{(\bar g,\bar u)}$ on $\hat M$.
Lemma~\ref{lem:nondeg} and strict stability give $\nabla (P_{\bar g}^{\hat M}h)=0$. Since $P_{\bar g}^{\hat M}h$ decays, it vanishes identically.
Thus $h=\mathcal L_X\bar g$ for some $X\in\mathcal X(\hat M)$.
Subtracting the corresponding pure-gauge kernel element gives
\[
(0,w)\in
\ker\left.D\mathcal T\right|_{(\bar g,\bar u)},
\qquad
w:=v-X(\bar u).
\]
The linearized equations imply $L_{\bar g}^*w=0$. Since $w$ decays, it  implies $w=0$.

Hence the kernel consists precisely of pure-gauge variations.
Theorem~\ref{thm:existence} supplies the desired static vacuum
extensions, and Proposition~\ref{pr:str} ensures strict stability. 
\end{proof}

For Theorem~\ref{Th:local}, we first deform the metric to be 
scalar flat without increasing its mass. The resulting mean
curvature need not equal that of $\bar g$, so we use static vacuum extensions to bridge this difference.

\begin{proof}[Proof of Theorem~\ref{Th:local}]
Let $\varphi$ be the unique positive solution of
\[
-\frac{4(n-1)}{n-2}\Delta_g\varphi+R_g\varphi=0,
\qquad
\varphi|_\Sigma=1,
\qquad
\varphi\longrightarrow1
\quad\text{at infinity},
\]
and set $\tilde g:=\varphi^{4/(n-2)}g$.
Since $R_g\ge0$, the maximum principle gives
$0<\varphi\le1$ and $\nu_g(\varphi)\le0$ on $\Sigma$.
The conformal transformation formulas therefore give $R_{\tilde g}=0$,  $\tilde g^\intercal=\bar g^\intercal$,  and 
\[
H_{\tilde g}= H_g+\frac{2(n-1)}{n-2}\nu_g(\varphi) \le H_g\le H_{\bar g}.
\]
Moreover, the expansion
$\varphi=1+Ar^{2-n}+o(r^{2-n})$ has $A\le0$, so
\[
m_{\ADM}(\tilde g) = m_{\ADM}(g)+2A \le m_{\ADM}(g).
\]

We next compare the boundary data of $\tilde g$ and $\bar g$.
Write $\bar\gamma=\bar g^\intercal$ and set
\[
H_s:=(1-s)H_{\bar g}+sH_{\tilde g}, \qquad 0\le s\le1.
\]
Theorem~\ref{Th:extension} gives a smooth family of static
vacuum pairs $(g_s,u_s)$ satisfying
\[
(g_s^\intercal,H_{g_s})=(\bar\gamma,H_s), \qquad (g_0,u_0)=(\bar g,\bar u).
\]
After shrinking the original neighborhood, the potentials
$u_s$ are positive, the metrics $g_s$ are strictly stable,
and the local minimizing estimate holds at $g_1$. Since $\tilde g$ and $g_1$ are scalar flat
and have the same Bartnik boundary data,
Theorem~\ref{Th:minimizer} gives
\[
m_{\ADM}(\tilde g)\ge m_{\ADM}(g_1).
\]

Along the static family, the induced boundary metric is fixed,
so the first variation formula reduces to
\[
\frac{d}{ds}m_{\ADM}(g_s) = -\frac{1}{(n-1)\omega_{n-1}} \int_\Sigma u_s(H_{\tilde g}-H_{\bar g})\,d\sigma_{\bar\gamma} \ge 0.
\]
Consequently,
\[
m_{\ADM}(g) \ge m_{\ADM}(\tilde g) \ge m_{\ADM}(g_1) \ge m_{\ADM}(\bar g).
\]

If equality holds, the mass variation formula and positivity
of $u_s$ imply $H_{\tilde g}=H_{\bar g}$.
The conformal mean curvature formula then gives
$H_g=H_{\bar g}$ and $\nu_g(\varphi)=0$ on $\Sigma$.
The boundary point lemma forces $\varphi\equiv1$, so $R_g=0$.
The rigidity statement of Theorem~\ref{Th:minimizer} now
completes the proof.
\end{proof}
\bibliographystyle{amsplain}
\bibliography{2026stability}
\end{document}